\documentclass[12pt]{article}

\setlength{\textwidth}{165mm} \setlength{\textheight}{210mm}
\setlength{\oddsidemargin}{0mm}
\setlength{\evensidemargin}{.5mm}

\pagestyle{myheadings}

\usepackage{amsmath,amsthm,amssymb,amsfonts}
\usepackage{latexsym}
\def\pmb#1{\setbox0=\hbox{$#1$}%
\kern-.025em\copy0\kern-\wd0
\kern.05em\copy0\kern-\wd0
\kern-.025em\raise.0433em\box0}






\usepackage{amssymb}



\newtheorem{theorem}{Theorem} 
\newtheorem{lemma}[theorem]{Lemma} 
\newtheorem{proposition}[theorem]{Proposition}
\newtheorem{corollary}[theorem]{Corollary}
\newtheorem{remark}{Remark} 


\newcommand{\sm}[1]{\mbox{\small $#1$}}
\newcommand{\la}[1]{\mbox{\large $#1$}}
\newcommand{\La}[1]{\mbox{\Large $#1$}}
\newcommand{\LA}[1]{\mbox{\LARGE $#1$}}

\begin{document}




\title{Dirichlet principal eigenvalue
comparison theorems in  geometry with torsion}


\author{Ana Cristina Ferreira$^{\dag}$ and Isabel Salavessa$^{\ddag}$}
\date{}
\protect\footnotetext{\!\!\!\!\!\!\!\!\!\!\!\!\! {\bf MSC 2010:}
Primary: 58C40, 35P15, 53C07\\
{\bf ~~Key Words:}  Drift-Laplacian; principal eigenvalue; comparison; torsion\\
This work was supported by the 
{\it Funda\c{c}\~ao para a Ci\^encia e Tecnologia} (Portugal) through the 
research project PTDC/MAT/118682/2010, and through 
{\it Centro de Matem\'{a}tica da Universidade do Minho} 
[PEst-C/MAT/UI0013/2011, PEst-OE/MAT/UI0013/2014 to A.C.F] 
and {\it Centro de F\'isica e Engenharia de Materiais Avan\c{c}ados} 
[UID/CTM/04540/ 2015 to I.S.]}
\maketitle ~~~\\[-5mm]
\begin{quotation}\noindent
{\footnotesize ${\dag}$ Centro de Matem\'{a}tica, Universidade do Minho, Campus de Gualtar,
P-4710-057 Braga, Portugal. ~e-mail: anaferreira@math.uminho.pt\\
${\ddag}$  Center of Physics and Engineering of Advanced Materials (CeFEMA),
Instituto Superior T\'{e}cnico, Universidade de Lisboa,
Edif\'{\i}cio Ci\^{e}ncia, Piso 3, Av.\ Rovisco Pais, P-1049-001
Lisboa, Portugal. ~e-mail: isabel.salavessa@ist.utl.pt}\\[2mm]
{\small {\bf Abstract:} 
We describe min-max formulas for the  principal eigenvalue
of a $V$-drift Laplacian defined by a vector field $V$ on a geodesic ball
of a Riemannian manifold $N$.  Then we derive comparison results for the 
principal eigenvalue with the one of a spherically symmetric model space 
endowed with a radial vector field, under pointwise comparison  
of the corresponding  radial sectional and Ricci curvatures,  and of the
radial component of the vector fields. 
These results generalize the  known case $V=0$.}
\end{quotation}

\markright{\sl\hfill  Ferreira -- Salavessa \hfill}

\section{Introduction}

Given a vector field $V$ on a $m$-dimensional  Riemannian manifold $(N,g)$, 
the $V$-drift Laplacian $\Delta_Vu=\Delta u-g(V, \nabla u)$ can be introduced 
in the context of Riemannian geometry with torsion.  If $\nabla^V$ is a 
metric connection with vectorial torsion defined by $V$, $\Delta_Vu$ 
is the trace of the covariant derivative of $du$.  If $V=0$, $\Delta_0u$ 
is the usual Laplacian  for the Levi-Civita connection. The purpose of 
this work is twofold. First, to prove the existence of a principal 
eigenvalue of the operator $-\Delta_V$, $\lambda^*_V$, for any vector field 
$V$ on a regular domain $\bar{M}$ of $N$, under the Dirichlet boundary condition. 
Second, to establish a variational principle for $\lambda_V^*$, and 
 use it to obtain comparison results when $\bar{M}$ is a geodesic
ball.
  
In Lemma \ref{nonself}, we show  that there is a weight function $f$, such that 
$\Delta_V$ is self-adjoint with respect to the $L^2$ space with measure weighted 
by $e^{-f}$ if and only if $V=\nabla f$. In this case, $\Delta_V$ is the 
Bakry-\'{E}mery $f$-Laplacian $\Delta_f$. If $\bar{M}=M\cup \partial M$ is a 
compact domain of $N$ with smooth boundary $\partial M$, the spectrum for the 
eigenvalue problem of $-\Delta_f$ with  Dirichlet boundary  condition is a discrete 
sequence of positive real values converging to infinity. Furthermore, each eigenvalue 
has a variational characterization of Rayleigh type, as shown in \cite{LR}. 
On the other hand, any vector field, $V$, which is not a gradient gives rise to 
an operator $\Delta_V$ for which there is no canonically associated Hilbert space 
on which this operator is self-adjoint. As a consequence, standard arguments used 
to establish a variational principle for an eigenvalue may not be applied. 

We obtain existence of a principal eigenvalue $\lambda_V^*$ using Krein-Rutman theory for compact  operators on $C_0^{1,\alpha}(\bar{M})$. This eigenvalue is a distinguished one, simple, with a positive eigenfunction $\omega_V$, only vanishing on $\partial M$. The eigenvalue $\lambda_V^*$ is positive by a maximum principle argument. In Proposition \ref{MaxPrinc*}, we show that $-\Delta_V$ and its formal adjoint operator $-\Delta^*_V$ have the same set of eigenvalues, which form a discrete set that may only accumulate
 at infinity. Furthermore, they have the same principal eigenvalue. As a consequence,
 the weak maximum principle also holds for $\Delta^*_V$. 

In Theorem \ref{Barta11}, using principal eigenfunctions, we give
a simple proof of Barta's type inequalities (\ref{Barta1})-(\ref{Barta2}) for 
$\lambda^*_V$ on a regular domain $\bar{M}$.
This is a well known inequality for the case $V=0$ (\cite{Cha}, III.2., 
Lemma 1), and it is a useful tool for estimating principal eigenvalues. It 
consists of a min-max formula for the ratio  $-\Delta_Vu/u$, 
taken over all functions $u$ on a positive cone of $H^1_0(M)$.  

In Theorem \ref{integralminmax}, we describe  a Rayleigh type variational 
principle for $\lambda^*_V$ on a regular coordinate chart $\bar{M}$. This 
variational principle was initially due to Holland \cite{Ho} for a certain type 
of second-order linear elliptic equations on domains of Euclidean space. 
Later, it was reformulated by Godoy, Gossez and Paczka in \cite{GGP}, using suitable 
weighted Sobolev spaces. This provided an alternative proof of the formula.
Instead of Holland's method which uses ergotic measures to obtain a positive solution,
 $G_V$, of a related degenerate elliptic second order differential equation, 
it consits of applying Krein-Rutman theory for compact, positive and irreducible 
operators on  weighted $L^2$ spaces. It thereby requires less regularity 
conditions on the domain and coefficients of the operator. We follow this second 
approach, taking weighted Sobolev spaces on $\bar{M}$  weighted  by the square of 
the intrinsic distance function to $\partial M$, 
$d_{\partial M}(p)=\inf_{x\in \partial M}d(p,x)$, for $p\in \bar{M}$, defined 
in (\ref{sobolev1})-(\ref{sobolev2}). In Lemma \ref{omegaVboundary}(3) we 
obtain Sobolev embedding theorems 
in case $\bar{M}$ is a global chart domain, generalizing the known Euclidean case.
The variational  principle is given by 
$$\lambda_V^*=\inf_{\{u\in \mathcal{D}_{\partial}: ~\|u\|_{L^2}=1\}}\LA{(} \mathcal{L}(u,u)-
\inf_{v\in H^1_{\partial}(M)}{Q}_u(v)\LA{)},$$
where
$$\left\{\begin{array}{lcl}
\mathcal{L}(u,u)&=&\int_M (|\nabla u|^2+u\,g(V,\nabla u))dM,\\
Q_u(v) &=& \int_Mu^2(|\nabla v|^2-g(V,\nabla v))dM.\end{array}\right.
$$
Here, the infimum is taken over all  $L^2$-unit functions  of the class\\[-3mm]
\begin{equation}\label{Dd}
 \mathcal{D}_{\partial}:=\LA{\{}u\in H^1(M): \frac{u(p)}{d_{\partial M}(p)}\in 
[C_1,C_2] \mbox{~for~some~constants~}C_i>0\LA{ \}}.
\end{equation}
This infimum is achieved at a function $u_V\in \mathcal{D}_{\partial}$, $L^2$-normalized,
and given by the product $u_V=\omega_V\cdot \sqrt{G_V}$, where $G_V$ is a bounded, positive,
weak  solution of the degenerate elliptic differential equation,  
$\mathrm{div}^0(\omega_V^2(\nabla G+GV))=0$, that is, a solution of the integral 
equation (\ref{GV}). Furthermore, it is unique in the weighted Sobolev 
space (\ref{sobolev2}), up to a  multiplicative constant. In Proposition \ref{Proposition5.4},
we show that,  when $V$ is the gradient of a function $f$, it turns out that 
$G_V= e^{-f}$, and  this variational principle reduces to the Rayleigh 
vartiational principle  for the first  eigenvalue $\lambda_f$, given in 
equation (\ref{Rayleigh f}).

These formulas allow us to obtain comparison results for the principal 
eigenvalue on geodesic balls,  under pointwise comparison of the radial curvatures 
 and  the radial component of $V$, with the ones of model spaces.
On $N$, the radial direction from a point $p_0$ is defined  by $\partial_t(p)=\nabla r(p)$, where $t=r(p)=d(p, p_0)$ is the intrinsic distance of $p$ to $p_0$.
The exponential map of $N$ defines the spherical geodesic parametrization of a closed geodesic  ball $\bar{M}=\bar{B}_{r_0}(p_0)$. Namely,   
 $p=\exp_{p_0}(t\xi)=:\hat{\Theta}(t,\xi)$, with  $\xi$ in the unit sphere of $T_{p_0}N$,
 and $0\leq  t\leq r_0$. The radial component of $V$ is given by its projection
onto the radial direction,  $h_1(t,\xi)=g(V(p),\partial_t(p))$.
Our model spaces are geodesic balls $\bar{M}^{\rho}$ of  spherically symmetric spaces, 
$N^{\rho}=[0,l)\times_{\rho}\mathbb{S}^{m-1}$, endowed with a radial vector field,
$V^{\rho}=h(t)\partial_t$. The warping function $\rho$ is chosen based on  pointwise comparison of the radial curvatures with the ones of $N$. The function $h(t)$ is
chosen based on pointwise comparison with $h_1(t,\xi)$ of $V$.
Comparison theorems for the 
first eigenvalue of $-\Delta_0$ on a geodesic ball were obtained by Cheng in 
\cite{Cheng}, using space forms as model spaces. These theorems were  generalized by  Freitas, Mao and the second author in \cite{FMS}, taking as model spaces  the larger class of spherically symmetric spaces.

Next, we  state our two main theorems on a closed geodesic ball $\bar{M}
=\bar{B}_{r_0}(p_0)$, endowed  with a vector field $V$. 
 We are assuming  $r_0<\min\{\mathrm{inj}(p_0), l\}$, where $\mathrm{inj}(p_0)$ 
is the injectivity  radius of $p_0\in N$. 
The ball $M^{\rho}$ in the model space is centered at the origin and has  
radius $r_0$. The radial sectional 
curvature of $N^{\rho}$ is given by $-\rho''(t)/\rho(t)$. This curvature 
is a constant $\kappa$ in the case of space forms. Namely, 
the spheres when $\rho(t)= (\sqrt{\kappa})^{-1}\sin \sqrt{\kappa}t$, for 
$\kappa>0$, the Euclidean space when $\rho(t)=t$, for $\kappa=0$,
and the hyperbolic spaces when $\rho(t)= (\sqrt{-\kappa})^{-1}\sinh 
\sqrt{-\kappa}t$,  for  $\kappa <0$.
The radial vector field on the model space, $V^{\rho}$, depends on $t$ only, 
with initial condition $h(0)=0$. The principal eigenvalue is the first 
eigenvalue $\lambda_{\rho, H}$ of the Bakry-\'{E}mery $H$-Laplacian 
$\Delta^{\rho}_H$ on $\bar{M}^{\rho}$, with  Dirichlet boundary condition, 
where $H'(t)=h(t)$. In Section 4, we describe properties of the 
corresponding  principal eigenfunction $\omega_{\rho,H}$. We also describe
the whole spectrum of $-\Delta_{\rho, H}$, relating to a family of
 one-dimensional eigenvalue problems and the spectrum of the $(m-1)$-sphere. 

\begin{theorem} \label{Theorem 1.1} 
We assume the radial sectional curvatures of $M$, $K(\partial_t,X)$, 
and the radial component of $V$ satisfy at  each  point $p=\hat{\Theta}(t,\xi)$,
\begin{eqnarray}
 K(\partial_t,X) &\leq& -\frac{\rho''(t)}{\rho(t)},\label{radialSC}\\[-1mm]
 h_1(t,\xi) &\leq&  h(t), 
\end{eqnarray}
for all $ 0\leq t\leq r_0$, and unit vectors 
$\xi\in T_{p_0}M$ and  $X\in T_pM$ orthogonal to $\partial_t(p)$.
Then, we have,  $\lambda_V^*\geq \lambda_{\rho, H}$. Furthermore,
equality of the eigenvalues holds if and only if $M$ is isometric to 
$M^{\rho}$ and $h_1(t,\xi)=h(t)$, for all $(t,\xi)$. In this case, the principal 
eigenfunctions are the same, that is, $\omega_V=\omega_{\rho,H}$. 
\end{theorem}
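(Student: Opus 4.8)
The plan is to combine the Barta-type inequality of Theorem \ref{Barta11} with the Laplacian comparison theorem, using a radial test function built from the model principal eigenfunction. Since $\bar M^\rho$ is rotationally symmetric and $V^\rho$ is radial, simplicity of $\lambda_{\rho,H}$ forces its principal eigenfunction to be radial, $\omega_{\rho,H}=\phi(t)$, with $\phi>0$ on $[0,r_0)$, $\phi(r_0)=0$, $\phi'(0)=0$, $\phi'<0$ on $(0,r_0]$ (all established in Section 4), satisfying $\phi''+\phi'\bigl((m-1)\rho'/\rho-h\bigr)=-\lambda_{\rho,H}\phi$. I would set $u:=\phi\circ r$ on $\bar M$; since $r_0<\mathrm{inj}(p_0)$, $r$ is smooth on $\bar M\setminus\{p_0\}$, $\phi'(0)=0$ makes $u$ smooth across $p_0$, and $u/d_{\partial M}=\phi(r)/(r_0-r)\to-\phi'(r_0)>0$ near $\partial M$, so $u$ lies in the admissible cone for Theorem \ref{Barta11}.

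Since $\nabla u=\phi'(t)\partial_t$, one computes $\Delta_V u=\phi''+\phi'\bigl(\Delta r-h_1(t,\xi)\bigr)$, and substituting the model ODE gives
$$-\Delta_V u=\lambda_{\rho,H}\,u+\phi'(t)\Bigl[\bigl((m-1)\tfrac{\rho'}{\rho}-\Delta r\bigr)+\bigl(h_1(t,\xi)-h(t)\bigr)\Bigr].$$
Under (\ref{radialSC}) the Laplacian comparison theorem yields $\Delta r\ge(m-1)\rho'/\rho$ on $\bar M\setminus\{p_0\}$, so the first bracketed term is $\le0$; the second is $\le0$ by $h_1\le h$; and $\phi'\le0$. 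Hence $-\Delta_V u\ge\lambda_{\rho,H}u$ on $M$, and Theorem \ref{Barta11} gives $\lambda_V^*\ge\inf_M(-\Delta_V u/u)\ge\lambda_{\rho,H}$.

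For the rigidity, assume $\lambda_V^*=\lambda_{\rho,H}$. Let $\omega_V^*>0$ be the principal eigenfunction of the adjoint $-\Delta_V^*$, which by Proposition \ref{MaxPrinc*} has principal eigenvalue $\lambda_V^*$ as well. Pairing the supersolution inequality $(-\Delta_V-\lambda_V^*)u\ge0$ with $\omega_V^*$ and integrating by parts — the boundary terms vanishing because $u|_{\partial M}=0$ — gives $\int_M\bigl((-\Delta_V-\lambda_V^*)u\bigr)\omega_V^*\,dM=0$, so the nonnegative integrand vanishes and $-\Delta_V u=\lambda_V^*u$; by simplicity $u=c\,\omega_V$. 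Then every inequality above is an equality, i.e. $\phi'(t)\bigl[((m-1)\rho'/\rho-\Delta r)+(h_1-h)\bigr]\equiv0$ on $M$, and as $\phi'<0$ on $(0,r_0)$ with both bracket terms of the same sign, I get $h_1(t,\xi)=h(t)$ and $\Delta r=(m-1)\rho'/\rho$ everywhere. The equality case of the comparison then forces each principal curvature of every geodesic sphere to equal $\rho'/\rho$, i.e. $\mathrm{Hess}\,r=(\rho'/\rho)(g-dr\otimes dr)$; integrating $\partial_t g_t=2(\rho'/\rho)g_t$ for the induced metrics $g_t$ on geodesic spheres, using that $g_t/\rho(t)^2$ tends to the round metric on $\mathbb{S}^{m-1}$ as $t\to0$, identifies the metric of $\bar M$ in geodesic polar coordinates with that of $\bar M^\rho$, so $\bar M$ is isometric to $\bar M^\rho$; under this isometry $u=\phi\circ r$ corresponds to $\omega_{\rho,H}$, giving $\omega_V=\omega_{\rho,H}$ after normalization. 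The converse is immediate, since then $\Delta_V$ and $\Delta^\rho_H$ agree under the isometry.

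The hard part will be the rigidity step: upgrading the trace identity $\Delta r=(m-1)\rho'/\rho$ to equality of all geodesic-sphere curvatures with the model value requires the eigenvalue — not merely trace — form of the Riccati comparison under (\ref{radialSC}), after which the metric reconstruction is routine; the subsidiary technical points are verifying that $u=\phi\circ r$ meets the regularity demanded by Theorem \ref{Barta11} (smoothness across $p_0$ and the two-sided bound $u/d_{\partial M}\in[C_1,C_2]$) and that the boundary integrals in the adjoint pairing indeed vanish.
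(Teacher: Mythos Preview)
Your proposal is correct and follows essentially the same approach as the paper: transplant $\omega_{\rho,H}$ to $\bar M$ via $r$, use the Rauch--Bishop comparison $\Delta_0 r\ge(m-1)\rho'/\rho$ together with $h_1\le h$ and $\omega_{\rho,H}'\le 0$ to get $-\Delta_V\tilde\omega/\tilde\omega\ge\lambda_{\rho,H}$, and conclude by the Barta inequality of Corollary~\ref{Barta12}. Your rigidity argument via pairing with $\omega_V^*$ is exactly the mechanism behind the equality clause of Corollary~\ref{Barta12} (see the proof of Theorem~\ref{Barta11}); the paper is simply terser here, and for the step you flag as ``hard'' --- promoting $\Delta_0 r=(m-1)\rho'/\rho$ to $\mathcal{A}=\rho(t)\,\mathrm{Id}$ --- it invokes the equality case of the Rauch-type comparison (\cite{FMS}, Theorem~4.2), which under the sectional curvature bound (\ref{radialSC}) already controls each eigenvalue of $\mathcal{A}$, not merely the trace.
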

\noindent
If $V=0$ and $H=0$, this is Theorem 4.4 of \cite{FMS}. 
Applying  Theorem \ref{Theorem 1.1} to vector fields $V$ on a geodesic ball 
 of a model space  $N^{\rho}$, we conclude that the principal eigenvalue 
$\lambda_{V}^*$ is just $\lambda_{\rho, H}$, if the radial component of $V$ 
depends on $t$ only. Therefore, in this case, the principal 
eigenvalue does not depend on the non-radial component of $V$, and the principal 
eigenfunction $\omega_V$ of the $V$-drift Laplacian is the radial first  
eigenfunction $\omega_{\rho,H}$ for the $H$-Laplacian. 

\begin{theorem}\label{Theorem 1.2} 
We are given  radial vector fields, $~V(p)=h_1(t,\xi)\partial_t$ on $\bar{M}$, 
and $V^{\rho}=h(t)\partial_t$ on  $\bar{M}^{\rho}$. We assume that 
$h(t)\geq 0$, and $~h_1(0,\xi)=h(0)=0$ holds  for all unit vectors $\xi\in T_{p_0}M$. 
We also assume that the radial Ricci curvatures of $M$ and $V$ 
satisfy the following inequalities 
\begin{eqnarray}
 \mathrm{Ricci}(\partial_t,\partial_t) &\geq& -(m-1)\frac{\rho''(t)}{\rho(t)}, 
\label{radialRicci}\\[-1mm]
 \mathrm{div}^0(V)(t,\xi)-\frac{1}{2}|V|^2(t,\xi)&\geq&
 \mathrm{div}^0_{\rho}(V^{\rho})(t)
-\frac{1}{2}|V^{\rho}|^2(t), \label{extra} 
\end{eqnarray}
for all $t,\xi$, with $t\leq r_0$. Then $\lambda_V^*\leq \lambda_{\rho,H}$,
and  equality of the eigenvalues holds if and 
only if $M$ is isometric to $M^{\rho}$ and equality holds in (\ref{extra}), 
for all $0\leq t\leq r_0$.  In this case, the principal eigenfunctions are related 
by the formula,  
$\omega_V(t,\xi)=\omega_{\rho,H}(t)e^{\frac{-H(t)+H_1(t,\xi)}{2}}$,  
where $\frac{dH_1}{dt}(t,\xi)=h_1(t,\xi)$,  and $H'(t)=h(t)$.  
If, additionally, $\rho(t), h(t)$, and $ h_1(t,\xi)$ are analytic 
on  $t\in [0, r_0]$, then  $h_1=h$ and $\omega_V=\omega_{\rho,H}$ .
\end{theorem}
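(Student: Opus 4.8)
The plan is to reduce Theorem~\ref{Theorem 1.2} to an eigenvalue comparison for self-adjoint operators, by observing that a radial vector field $V=h_1(t,\xi)\partial_t$ is pointwise a gradient in the radial direction: setting $H_1(t,\xi)=\int_0^t h_1(s,\xi)\,ds$ we have $g(V,\nabla u)=g(\nabla H_1,\nabla u)$ only along radial derivatives, so $\Delta_V$ is not literally $\Delta_{H_1}$ unless $H_1$ depends on $t$ alone; the correct move is instead to apply the \emph{integral} variational principle of Theorem~\ref{integralminmax} directly. First I would plug a carefully chosen test function into the min-max formula $\lambda_V^*=\inf_u\bigl(\mathcal{L}(u,u)-\inf_v Q_u(v)\bigr)$: take $u$ to be the push-forward under $\hat\Theta$ of the model eigenfunction weighted by $e^{(-H+H_1)/2}$, namely the candidate $\omega_V$ from the statement, and show $\mathcal{L}(u,u)-\inf_v Q_u(v)\le \lambda_{\rho,H}\|u\|_{L^2}^2$. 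For this candidate one expects the inner infimum over $v$ to be attained (up to constants) at $v\equiv\text{const}$, i.e. $G_V\equiv 1$, which is exactly what makes the radial case tractable: for a radial $V$ the degenerate equation $\mathrm{div}^0(\omega_V^2(\nabla G+GV))=0$ should admit $G\equiv 1$ as its bounded positive solution, so that the variational expression collapses to $\mathcal{L}(\omega_V,\omega_V)$.

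Next I would compute $\mathcal{L}(\omega_V,\omega_V)=\int_M(|\nabla\omega_V|^2+\omega_V\,g(V,\nabla\omega_V))\,dM$ in geodesic polar coordinates on $\bar M=\bar B_{r_0}(p_0)$. Writing $dM=J(t,\xi)\,dt\,d\xi$ for the volume density, integration by parts in $t$ converts this into $\int_M\omega_V(-\Delta_V\omega_V)\,dM$ up to the boundary term (which vanishes since $\omega_V=0$ on $\partial M$), and one checks that $\Delta_V\omega_V = \Delta_0\omega_V - g(V,\nabla\omega_V)$. Because $\omega_V=\omega_{\rho,H}(t)e^{(-H(t)+H_1(t,\xi))/2}$ and $V=h_1\partial_t=\frac{\partial H_1}{\partial t}\partial_t$, the exponential factor is designed so that the cross terms produced by $\Delta_0$ acting on the product cancel against the drift term and against the curvature-induced variation of $J$; the residual Laplacian of the warped model eigenfunction contributes $-\lambda_{\rho,H}$, while the leftover terms involving $\partial_t\log J$ versus $\partial_t\log J^\rho$ and $h_1$ versus $h$ are controlled by the Ricci comparison~(\ref{radialRicci}) through the Bishop-type inequality $\partial_t\log J(t,\xi)\ge \partial_t\log J^\rho(t)$ (equivalently $J(t,\xi)\le J^\rho(t)$ after normalizing, using $r_0<\mathrm{inj}(p_0)$), together with~(\ref{extra}) rewritten as a bound on $\partial_t h_1-\tfrac12 h_1^2 + h_1\,\partial_t\log J$. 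After collecting terms one obtains the pointwise inequality
\[
-\Delta_V\omega_V \;\le\; \lambda_{\rho,H}\,\omega_V
\]
on all of $M$, whence by Barta's inequality (Theorem~\ref{Barta11}, inequality~(\ref{Barta2})) applied to the positive function $\omega_V$ vanishing exactly on $\partial M$, we get $\lambda_V^*\le \sup_M(-\Delta_V\omega_V/\omega_V)\le\lambda_{\rho,H}$.

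For the rigidity statement, equality forces $-\Delta_V\omega_V/\omega_V\equiv\lambda_{\rho,H}$, so every intermediate inequality is an equality: the Bishop comparison becomes an equality, which by the equality case of the Ricci comparison (as in Theorem~4.4 of~\cite{FMS}, with $r_0<\mathrm{inj}(p_0)$) forces $M$ to be isometric to $M^\rho$; and equality in the term coming from~(\ref{extra}) forces $\mathrm{div}^0(V)-\tfrac12|V|^2 = \mathrm{div}^0_\rho(V^\rho)-\tfrac12|V^\rho|^2$ for all $(t,\xi)$. Under these equalities $\omega_V$ is the claimed product; and if moreover $\rho,h,h_1$ are analytic in $t\in[0,r_0]$, the equality in~(\ref{extra}) becomes, after substituting the now-known isometry $J=J^\rho$, a first-order ODE in $t$ for $H_1(\cdot,\xi)-H(\cdot)$ with analytic coefficients and initial data $H_1(0,\xi)=H(0)=0$, $h_1(0,\xi)=h(0)=0$; the only analytic solution is the trivial one, giving $h_1\equiv h$ and hence $\omega_V=\omega_{\rho,H}$.

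The main obstacle is the first step: justifying that the inner infimum in the variational principle is attained at $G_V\equiv 1$ for a radial $V$, i.e. verifying that the constant function is the (unique, up to scaling) bounded positive weak solution of the degenerate elliptic equation $\mathrm{div}^0(\omega_V^2(\nabla G+GV))=0$. One must check that $\nabla 1 + 1\cdot V = V = h_1\partial_t$ pairs to zero against $\nabla\varphi$ after weighting by $\omega_V^2$ and integrating — which requires integrating by parts in $t$ and seeing the boundary/axis contributions vanish — and then invoke the uniqueness clause of Theorem~\ref{integralminmax} in the weighted Sobolev space~(\ref{sobolev2}). Everything downstream is then a (somewhat lengthy but routine) computation in polar coordinates combined with the two stated comparison hypotheses; keeping careful track of the $J$ versus $J^\rho$ terms and their sign is the place where errors are most likely.
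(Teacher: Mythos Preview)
Your proposal has two genuine gaps.

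\textbf{First, the test function.} You take $u(t,\xi)=\omega_{\rho,H}(t)\,e^{(-H(t)+H_1(t,\xi))/2}$, which depends on~$\xi$. On a general $M$ the Laplacian in geodesic polar coordinates is \emph{not} $\partial_{tt}+\Delta_0r\,\partial_t+\rho^{-2}\Delta_{\mathbb{S}^{m-1}}$; even in the model case, $\Delta_0 u$ picks up the angular terms $\tfrac{1}{4}|\nabla_{\mathbb{S}}H_1|^2$ and $\tfrac{1}{2}\Delta_{\mathbb{S}}H_1$, which are nowhere controlled by~(\ref{radialRicci}) or~(\ref{extra}). So you cannot obtain a pointwise bound $-\Delta_V u\le\lambda_{\rho,H}u$, and the Barta route stalls. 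The paper avoids this by using the purely radial test function $\tilde w(t)=\omega_{\rho,H}(t)e^{-H(t)/2}$, for which there are no angular derivatives; the assumption $h\ge 0$ then guarantees $\tilde w'\le 0$, which is exactly what is needed to exploit the Bishop inequality (incidentally, under~(\ref{radialRicci}) one has $\partial_t\log J\le\partial_t\log\rho$, the reverse of what you wrote).

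\textbf{Second, the inner infimum.} Your claim that $G_V\equiv 1$ (equivalently, that $\inf_v Q_u(v)=0$) is false for nonzero radial $V$. For $G\equiv 1$ to solve~(\ref{GV}) one would need $\int_M g(V,\nabla\phi)\,\omega_V^2\,dM=0$ for all $\phi$, i.e.\ $\mathrm{div}^0(\omega_V^2 V)=0$, which does not hold. What is true (and what the paper proves as a lemma) is that for radial $V=h_1\partial_t$ the minimizer is $w_u=\tfrac12 H_1$ with value $Q_u(w_u)=-\tfrac14\int_M u^2 h_1^2\,dM<0$. The paper then plugs the radial $\tilde w$ into the \emph{integral} min-max formula~(\ref{integralminmaxformula}) (not Barta), integrates by parts in $t$ via an auxiliary Stokes lemma, and arrives at
\[
\lambda_V^*\ \le\ \lambda_{\rho,H}+\int_M\tilde w^2\Bigl(\tfrac{h_1^2}{4}+\tfrac{h'}{2}-\tfrac{h^2}{4}+\tfrac{h}{2}\Delta_0^{\rho}r-\tfrac{h_1'}{2}-\tfrac{h_1}{2}\Delta_0 r\Bigr)dM,
\]
which is $\le\lambda_{\rho,H}$ by~(\ref{extra}). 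Note that the $h_1^2/4$ appearing here comes precisely from the nonzero $\inf_v Q_{\tilde w}(v)$; without it the inequality does not close.

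For the equality case, once $\tilde w$ realizes the infimum one has $u_V=\omega_V\sqrt{G_V}=\tilde w$ with $\sqrt{G_V}=e^{-H_1/2}$ (from $w_{u_V}=H_1/2$ and~(\ref{achieved-wuV})), which gives the stated product formula for $\omega_V$. In the analytic case the equality in~(\ref{extra}) is a Riccati equation for $h_1$ with a regular singular point at $t=0$ (since $\Delta_0 r\sim (m-1)/t$), so plain ODE uniqueness does not apply; the paper passes to the associated linear second-order equation and uses the Frobenius method to conclude $h_1=h$.
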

The above theorem, in  case $V=V^{\rho}=0$  coincides with
Theorem 3.6 of \cite{FMS}. 
The assumption $r_0<\mathrm{inj}(p_0)$ can be dropped if the min-max 
formula in Theorem \ref{integralminmax} is valid on domains with less 
boundary regularity. Inequality 
(\ref{extra}) at $t=0$ means $h'_1(0,\xi)\geq h'(0)$, for all $ \xi$.

In \cite{HNR,HNR1, HNR2}, comparison results are obtained on an open domain 
$\Omega$ of Euclidean space, where the infimum and the supremum of $\lambda_V^*$ 
are searched among  all vector fields $V$ with $\|V\|\leq \tau$, for a fixed 
constant $\tau\geq 0$. The model space is the Euclidean disk with volume 
$|\Omega|$, endowed with the bounded radial vector field  $\tau\, x/|x|$, 
not defined at  $x=0$.  
In \cite{HNR3}, $\tau$ is  allowed to be a  radial  function $\tau(|x|)$, 
and  a  suitable symmetric rearrangement of the drift-Laplacian 
on the disk is taken. The results are  obtained  under 
comparison of  $L^{\infty}$ or $L^2$  norms of the vector field. Presently, 
we allow our model spaces to be geodesic disks of any spherically symmetric 
space,  endowed with any smooth radial  vector field $V(r(x))$,  vanishing at 
the origin. Our method is based on comparing pointwise the radial part of the 
vector fields and the radial curvatures. Radial curvature comparison conditions, 
as stated in the above  theorems, can be translated into comparison conditions
between volumes of geodesic balls of $N$ and $N^{\rho}$ of radius $t\leq r_0$.
Namely, (\ref{radialSC}) and (\ref{radialRicci})  correspond to
nondecreasing and nonincreasing ratio volume elements $\theta(t,\xi)$,
defined in (\ref{ratio}), respectively (see \cite{FMS}).

A simple application of the min-max formulas leads to some comparison results 
between $\lambda_0$ and $\lambda_V^*$  in Proposition \ref{simple}.  In
Corollary \ref{lambdav-lambda0} we conclude that, if $\mathrm{div}^0(V)\leq 0$,
then $\lambda_0\leq \lambda_V^*$.
In the particular case $V=\nabla f$, we get the following conclusion for a 
variation on the first eigenvalue $\lambda_f$.
\begin{proposition} If $f\in C^{\infty}(\bar{M})$ has constant $0$-Laplacian, 
$\Delta_0 f=2c_0$, then $\frac{d}{d\epsilon}|_{\epsilon=0}\lambda_{\epsilon f}$
exists and it is equal to $-c_0$.
\end{proposition}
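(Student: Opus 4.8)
The plan is to reduce the drift Laplacian $\Delta_{\epsilon f}$ to a Schr\"{o}dinger operator and then apply analytic perturbation theory. Since $\epsilon f$ is a gradient, $\Delta_{\epsilon f}$ is by Lemma \ref{nonself} the Bakry--\'{E}mery $(\epsilon f)$-Laplacian, self-adjoint on $L^2(\bar M, e^{-\epsilon f}dM)$, and the unitary map $u\mapsto e^{-\epsilon f/2}u$ from $L^2(e^{-\epsilon f}dM)$ onto $L^2(dM)$ conjugates $-\Delta_{\epsilon f}$, with Dirichlet boundary condition, to $H_\epsilon=-\Delta_0+V_\epsilon$, where a direct computation of $e^{-\epsilon f/2}\Delta_{\epsilon f}(e^{\epsilon f/2}\phi)$ yields the potential $V_\epsilon=\tfrac{\epsilon^2}{4}|\nabla f|^2-\tfrac{\epsilon}{2}\Delta_0 f$. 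Hence $\lambda_{\epsilon f}$ equals the bottom eigenvalue of $H_\epsilon$; at $\epsilon=0$ we have $H_0=-\Delta_0$, whose first Dirichlet eigenfunction I denote $\omega_0$, normalized by $\int_M\omega_0^2\,dM=1$.

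Next I would invoke Kato's perturbation theorem. The potential $V_\epsilon$ is a polynomial in $\epsilon$ with coefficients in $C^\infty(\bar M)$, hence bounded, so $\{H_\epsilon\}$ is a holomorphic family of type (A) on the fixed domain $H^2(M)\cap H^1_0(M)$; moreover the first Dirichlet eigenvalue $\lambda_0$ of $-\Delta_0$ is simple, its eigenfunction being of constant sign. Therefore there are $\delta>0$ and a real-analytic $\epsilon\mapsto\lambda(\epsilon)$ on $(-\delta,\delta)$ with $\lambda(0)=\lambda_0$, with $\lambda(\epsilon)$ a simple eigenvalue of $H_\epsilon$; by continuity of the spectrum it stays the lowest one, so $\lambda(\epsilon)=\lambda_{\epsilon f}$ for $|\epsilon|<\delta$. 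In particular $\tfrac{d}{d\epsilon}\big|_{\epsilon=0}\lambda_{\epsilon f}$ exists, and the first-order (Feynman--Hellmann) formula gives $\tfrac{d}{d\epsilon}\big|_{\epsilon=0}\lambda_{\epsilon f}=\int_M\omega_0^2\,\dot V_0\,dM$ with $\dot V_0=\tfrac{d}{d\epsilon}\big|_{\epsilon=0}V_\epsilon=-\tfrac12\Delta_0 f$. Under the hypothesis $\Delta_0 f=2c_0$ this is $\dot V_0\equiv -c_0$, so the integral equals $-c_0\int_M\omega_0^2\,dM=-c_0$, which is the assertion.

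One can also bypass Kato's theorem using the Rayleigh quotient $R_\epsilon(u)=\int_M|\nabla u|^2e^{-\epsilon f}dM\big/\int_M u^2e^{-\epsilon f}dM$ of \cite{LR}, recalled in Proposition \ref{Proposition5.4}: from $\lambda_{\epsilon f}\le R_\epsilon(\omega_0)$ and the matching lower bound $\lambda_{\epsilon f}=R_\epsilon(\omega_{\epsilon f})\ge R_0(\omega_{\epsilon f})$ together with the convergence $\omega_{\epsilon f}\to\omega_0$ in $H^1_0(M)$, one finds that both one-sided derivatives of $\lambda_{\epsilon f}$ at $0$ equal $\tfrac{\partial}{\partial\epsilon}\big|_{\epsilon=0}R_\epsilon(\omega_0)=\int_M f(\lambda_0\omega_0^2-|\nabla\omega_0|^2)\,dM=-\tfrac12\int_M f\,\Delta_0(\omega_0^2)\,dM=-\tfrac12\int_M\omega_0^2\,\Delta_0 f\,dM=-c_0$, the integrations by parts producing no boundary terms since $\omega_0$ and $\nabla(\omega_0^2)$ vanish on $\partial M$.

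The step needing the most care is the differentiability of $\lambda_{\epsilon f}$ at $\epsilon=0$ — as opposed to the mere existence of one-sided Dini derivatives — which is precisely where the simplicity of $\lambda_0$ and the analytic dependence of the family on $\epsilon$ (or, in the alternative argument, the $H^1$-continuity of the principal eigenfunction) are essential. Once that is in hand, the remaining computation is routine, and it is the constant-Laplacian hypothesis that makes the first-order term $-\tfrac12\int_M\omega_0^2\,\Delta_0 f\,dM$ evaluate to $-c_0$ with no explicit knowledge of $\omega_0$.
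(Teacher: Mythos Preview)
Your main argument---conjugating $-\Delta_{\epsilon f}$ to the Schr\"odinger family $H_\epsilon=-\Delta_0+\tfrac{\epsilon^2}{4}|\nabla f|^2-\tfrac{\epsilon}{2}\Delta_0 f$ and invoking Kato's analytic perturbation theory together with the Feynman--Hellmann formula---is correct and in fact yields more than the statement asks: real-analyticity of $\epsilon\mapsto\lambda_{\epsilon f}$ and the first-order formula $\lambda'_{\epsilon f}(0)=-\tfrac12\int_M\omega_0^2\,\Delta_0 f\,dM$ even without the constant-Laplacian hypothesis. The paper proceeds quite differently and more elementarily: it specializes Proposition~\ref{simple} to $V=\epsilon\nabla f$ (using $w_{\omega_0}=\epsilon f/2$ from Proposition~\ref{Proposition5.4}) to obtain the two-sided bound
\[
\frac{\epsilon}{2}\int_M\Bigl(\Delta_0 f-\frac{\epsilon}{2}|\nabla f|^2\Bigr)\omega_0^2\,dM\ \le\ \lambda_0^*-\lambda_{\epsilon f}\ \le\ \frac{\epsilon}{2}\int_M\Delta_0 f\,\omega_{\epsilon f}^2\,dM,
\]
with the reversed inequalities for $\epsilon<0$. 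When $\Delta_0 f\equiv 2c_0$, the right-hand side is exactly $\epsilon c_0$ (by $L^2$-normalization of $\omega_{\epsilon f}$, so no continuity of the eigenfunctions is needed) and the left-hand side is $\epsilon c_0+O(\epsilon^2)$; the squeeze gives the derivative. Thus the paper stays entirely within its own min-max framework and avoids external operator-theoretic machinery, while your route imports standard perturbation theory but repays that with stronger regularity and a formula valid for non-constant $\Delta_0 f$.

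One caution about your alternative sketch: the inequality $\lambda_{\epsilon f}=R_\epsilon(\omega_{\epsilon f})\ge R_0(\omega_{\epsilon f})$ is not generally true (the Rayleigh quotient is not monotone in $\epsilon$), so that branch as written has a gap; but since your Kato argument is complete, the proposal stands.
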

 
We may question geometric properties of eigenvalues of $-\Delta_V$, 
real or complex; not only the principal eigenvalue. Another natural development
will be the study, in the Riemannian context,  of the variation of the principal 
eigenvalue for domain variations  under variational constraints.
An extension of the variational principle for $\lambda^*_V$ to any regular 
Riemannian domain could be obtained by extending to such domains the embedding results 
on weighted Sobolev spaces given  in Lemma \ref{omegaVboundary}(3), 
and main result of \cite{SZ}.

\section{The $V$-Laplacian}
 
We consider $\bar{M}=M\cup \partial M$ a smooth, compact domain with boundary, 
which is contained in a smooth $m$-dimensional Riemannian manifold $(N,g)$. 
Denote by $\nabla^0$ its Levi Civita connection. We will use the subscript or 
superscript $0$ on geometric objects that are defined with respect to $\nabla^0$. 
Given a smooth vector field $V$ on $\bar{M}$, we define a new connection by 
\begin{equation}\label{Eqn: vect-conn}
\nabla^V_X Y = \nabla^0_X Y + \sm{\frac{1}{m-1}}(g(X,Y) V - g(V, Y) X).
\end{equation}
This is a metric connection, i.e.
 $0=\nabla^V_Z g(X,Y)=Z\cdot(g(X,Y))-g(\nabla^V_ZX,Y)-g(X,\nabla_Z^VY)$.
The torsion is given by
$$
T(X,Y)=\sm{\frac{1}{m-1}}(g(V,X)Y-g(V,Y)X),$$ 
and it is one of the two distinguished types out of the three torsion types
for metric connections, namely the vectorial torsion as named by  Cartan
\cite{Cartan}.
For each  function  $u:M\to \mathbb{R}$ of class $C^2$, the Laplacian of $u$ 
with respect to the affine connection $\nabla^V$ is given by
\begin{equation}
\label{Laplacian}
\Delta_V u:= \mathrm{tr}(\nabla^V du)=\Delta_0u -g({V},\nabla u), 
\end{equation}
where $\nabla u$ is the $g$-gradient of $u$. This is the so-called
Laplacian with drift  the vector field $V$. In case $V$ is the gradient 
of a function $f$, this is the Bakry-\'{E}mery $f$-Laplacian,
 $$\Delta_fu=\Delta_0 u-g(\nabla f, \nabla u)=
e^{f}\mathrm{div}^0(e^{-f}\nabla u).$$
The $f$-Laplacian is self-adjoint for the $e^{-f}$-weighted $L^2$ space, 
$L^2_{e^{-f}}(M)$, that is
$$\int_M v\,\Delta_fu\,\, e^{-f}dM=\int_M u\,\Delta_fv\,\, e^{-f}dM,
\quad\quad\forall u,v\in C^{\infty}_c(M),$$
where $ C^{k}_c(M)$ is the space of functions of class $C^k$
($0\leq k\leq +\infty$) with compact support in the interior of $M$. 
Equivalently, $\Delta_f$ is $L^2$-self-adjoint for the conformally equivalent metric
$\hat{g}=e^{-\frac{2}{m}f}g$. Note that $\hat{\Delta}u=e^{\frac{2}{m}f}\Delta_V u$, 
where $V=-\frac{2(m-1)}{m}\nabla f$ is of gradient type.

\begin{lemma} \label{nonself} 
The $V$-Laplacian is $L_{e^{-f}}^2$-self-adjoint for some density function $e^{-f}$ 
if and only if $~V=\nabla f$. 
\end{lemma}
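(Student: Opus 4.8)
The plan is to prove both implications by comparing the two expressions for $\Delta_V u$ against an integration-by-parts identity for the weighted Laplacian.

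First, for the easy direction, suppose $V=\nabla f$. Then by definition $\Delta_V u = \Delta_0 u - g(\nabla f,\nabla u) = \Delta_f u$, and the displayed identity just above the lemma already records that $\Delta_f$ is $L^2_{e^{-f}}$-self-adjoint; so nothing further is needed.

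For the converse, suppose there is a smooth function $f$ on $\bar M$ such that $\int_M v\,\Delta_V u\, e^{-f}dM = \int_M u\,\Delta_V v\, e^{-f}dM$ for all $u,v\in C^\infty_c(M)$. I would first rewrite the left-hand side using $\Delta_V u = \mathrm{div}^0(\nabla u) - g(V,\nabla u)$ and integrating by parts against the density $e^{-f}$: since $u,v$ have compact support, $\int_M v\,e^{-f}\mathrm{div}^0(\nabla u)\,dM = -\int_M g(\nabla(v e^{-f}),\nabla u)\,dM = -\int_M e^{-f}\big(g(\nabla v,\nabla u) - g(\nabla f,\nabla u)v\big)dM$. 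Hence $\int_M v\,\Delta_V u\, e^{-f}dM = -\int_M e^{-f} g(\nabla u,\nabla v)\,dM + \int_M e^{-f} v\, g(\nabla f - V,\nabla u)\,dM$. The first term is symmetric in $u$ and $v$, so the self-adjointness hypothesis forces the bilinear form $B(u,v):=\int_M e^{-f} v\, g(\nabla f - V,\nabla u)\,dM$ to be symmetric, i.e. $B(u,v)=B(v,u)$ for all $u,v\in C^\infty_c(M)$.

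The heart of the argument — and the step I expect to be the only real obstacle — is to extract from the symmetry of $B$ the pointwise identity $V=\nabla f$. Writing $W:=\nabla f - V$, symmetry says $\int_M e^{-f}\big(v\,g(W,\nabla u) - u\,g(W,\nabla v)\big)dM = 0$ for all compactly supported $u,v$. Integrating the second summand by parts moves $\nabla$ off $v$: $\int_M e^{-f} u\, g(W,\nabla v)\,dM = -\int_M v\,\mathrm{div}^0\!\big(e^{-f} u\,W\big)dM = -\int_M v\, e^{-f}\big(g(W,\nabla u) + u\,\mathrm{div}^0 W - u\, g(\nabla f, W)\big)dM$. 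Substituting back, the $g(W,\nabla u)$ terms combine to $2\int_M e^{-f} v\, g(W,\nabla u)\,dM$, and we obtain $\int_M e^{-f} v\,\big(2\,g(W,\nabla u) + u(\mathrm{div}^0 W - g(\nabla f,W))\big)dM = 0$ for all $u,v\in C^\infty_c(M)$. Since $v$ is arbitrary, the integrand's coefficient of $v$ must vanish identically: $2\,g(W,\nabla u) + u\,\big(\mathrm{div}^0 W - g(\nabla f,W)\big) = 0$ pointwise, for every $u\in C^\infty_c(M)$. Fixing a point $p$ and choosing $u$ with $u(p)=0$ but $\nabla u(p)$ an arbitrary vector shows $g(W(p),\cdot)=0$, hence $W(p)=0$; as $p$ was arbitrary, $W\equiv 0$, i.e. $V=\nabla f$. (The leftover term $u(\mathrm{div}^0 W - g(\nabla f,W))$ then vanishes automatically, consistent with $W\equiv0$.) This completes the proof; all the manipulations are routine integrations by parts, the one point requiring care being the localization argument that turns the vanishing of the integral for all test pairs into the pointwise conclusion.
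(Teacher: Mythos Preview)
Your proof is correct. Both arguments begin identically by reducing self-adjointness to the symmetry of the bilinear form $B(u,v)=\int_M e^{-f}v\,g(W,\nabla u)\,dM$ with $W=\nabla f-V$, and both finish by noting that $g(W,\nabla u)=0$ for all $u$ forces $W=0$. The difference is in how the symmetry of $B$ is converted into the pointwise statement. The paper avoids your second integration by parts: it first takes $v\equiv 1$ on a neighbourhood of $\mathrm{supp}\,u$, which collapses the symmetry identity to $\int_M e^{-f}g(W,\nabla u)\,dM=0$; applying this to the product $uv$ and subtracting the original symmetry relation then yields $\int_M 2v\,e^{-f}g(W,\nabla u)\,dM=0$ directly, with no divergence terms appearing. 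Your route instead carries the divergence terms $u(\mathrm{div}^0 W - g(\nabla f,W))$ and kills them at the end by the choice $u(p)=0$. Both are short; the paper's bump-function trick is slightly cleaner in that it never introduces the extra terms, while your approach is more mechanical and would generalize more readily to situations where the bump-function device is unavailable.
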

\noindent
\begin{proof} 
Let $u,v\in C^{2}_c(M)$. Applying Stokes's theorem to 
$\mathrm{div}^0(e^{-f}(u\nabla v-v\nabla u))$ we get,
\begin{equation}\label{symmetric}
\int_M(u\Delta_Vv-v\Delta_Vu)e^{-f}dM= 
\int_M g(\nabla f-V, u\nabla v-v\nabla u)e^{-f}dM.
\end{equation}
Assume $\Delta_V$ is $L_{e^{-f}}^2$-self-adjoint. Hence (\ref{symmetric}) $=0$
holds. Take  any $u\in C^{2}_c(M)$. Let $v\in  C^{2}_c(M)$ with $v=1$ on a 
neighbourhood of the support of $u$. The above equality implies 
$\int_M g(\nabla f-V,\nabla u)dM=0$. Thus, for any $u,v\in  C^{2}_c(M)$, 
we have $\int_M g(\nabla f-V,\nabla (vu))dM=0$. From 
$\nabla (uv)=u\nabla v + v\nabla u$ and (\ref{symmetric})$=0$ we obtain
$~\int_M 2v g(\nabla f-V,\nabla u)dM=0$. Since $v$ is arbitrary, then 
$g(\nabla f-V,\nabla u)=0$ for all $p\in M$, and $V=\nabla f$, necessarily.\qed 
\end{proof} 

The formal adjoint of  $\Delta_V:C^{2}_c(M)\to C^{0}_c(M)$ is the operator 
$\Delta^*_V:C^{2}_c(M)\to C^{0}_c(M)$, given by
\begin{equation}
\label{formaladjoint}
\Delta^*_Vv= \Delta_0 v+g(V,\nabla v) +\mathrm{div}^0(V)v.
\end{equation}
It can be extended as an operator defined on $H^1_0(M)$ (see notations in 
Section 3), which satisfies for all $u,v \in C^{2}_c(M)$
$$\int_M v\,\Delta_V u \,dM =\int_M u\,\Delta^*_V v\, dM.$$

If  $V=\nabla f$, for some  function $f\in C^{2}(\bar{M})$, it is known 
that the Dirichlet eigenvalue problem $\Delta_fu+\lambda u=0$, $u=0$ on 
$\partial M$, consists of a discrete sequence 
$0<\lambda_1<\lambda_2\leq \lambda_3\ldots \to +\infty$, \cite{LR}.
Furthermore, assuming each eigenvalue is repeated the number of times equal 
to its multiplicity, we may take  $\{\phi_1,\phi_2,\ldots\}$ a complete orthonormal 
basis of  $L^2_{e^{-f}}(M)$, composed of the corresponding
eigenfunctions. The first eigenvalue $\lambda_f:=\lambda_1$ 
is positive, of multiplicity one, and satisfies a Rayleigh variational principle:
\begin{equation}\label{Rayleigh f}
\lambda_f=\inf_{u\in H^1_0(M)}\frac{\int_M|\nabla u|^2e^{-f}dM}{
\int_M u^2e^{-f}dM}=\inf_{u\in C^{\infty}_c(M)} 
\frac{\int_M -u\Delta_fu\, e^{-f}dM}{
\int_M u^2e^{-f}dM}.   
\end{equation}
The infimum is achieved at $u$ if and only if $u$ is the $\lambda_f$-eigenfunction
$\omega_{f}$. All eigenvalues satisfy a similar variational principle (cf. \cite{LR}).

\section{The principal eigenvalue}

As in the previous section, we are assuming $\bar{M}=M \cup \partial M$ 
is a smooth compact domain  with  boundary contained in a complete 
Riemannian manifold $N$. We consider the Sobolev space
$$H^1(M)=\{u\in L^2(M): \exists \nabla u \in L^2(TM)\},$$
endowed with the $H^1$-norm
$\| u\|_{1}^2= \|u\|_{L^2}^2+ \|\nabla u\|_{L^2}^2$, 
where $\|u\|_{L^2}^2=\int_M u^2\, dM$ and 
$\|\nabla u\|^2_{L^2}=\int_M\|\nabla u\|_g^2 \,dM$.
Here, $\nabla u \in L^2(TM)$ means  the weak gradient of $u$
(cf.\ \cite{Cha}, I.5, Definition 4). The subspace 
$H^1_0(M)$ is the $H^1$-closure of  $C^{\infty}_c(M)$,
and $C^{\infty}_c(M)$ is $L^2$-dense on $L^2(M)$. We recall that  
$H^1_0(M)\cap C(\bar{M})\subset C_0(\bar{M})$, where $C(\bar{M})$
 is the space of continuous functions on $\bar{M}$ and
$C_0(\bar{M})$ its subspace of  functions  that 
vanish on $\partial M$. 
Conversely, $C^1(\bar{M})\cap C_0(\bar{M})\subset H^1_0(M)$
(\cite{Au}, 3.50 and Remark).
A more recent result (cf. \cite{SZ}), guarantees 
that if ${M}$ is diffeomorphic to a Euclidean domain with boundary of 
class $C^1$, then  $H^1(M)\cap C_0(\bar{M})\subset H^1_0(M)$. 

Let $H^1_0(M)'$ be the dual space of $H^1_0(M)$ of the bounded linear 
functionals on $H^1_0(M)$ with supremum norm, 
$\|F\|:=sup_{\|u\|_1=1} |F(u)|$, $\forall F\in H^1_0(M)'$. 
By the Riez representation theorem, $J:H^1_0(M)\to H^1_0(M)'$,
$J(u)(v):=(u,v)_1$, is a continuous isomorphic surjective isometry 
with continuous inverse $J^{-1}$. The  linear operator
$ I:L^2(M)\to H^1_0(M)'$,  $ If(v):=F_f(v):=\int_M fv\, dM$,
defines a continuous embedding of $L^2(M)$ into $H^1_0(M)'$ with 
$\|If\|\leq \|f\|_{L^2}$.  The usual embedding $H^1(M)\subset L^2(M)$ 
is compact for $m\geq 2$ (\cite{Au}, Kondrakov Theorem 2.34), inducing
 a compact operator $I :H^1_0(M)\to H^1_0(M)'$. 
We have $ \|F_f\|=\|u_f\|_1\leq \|f\|_{L^2}$, where $u_f=J^{-1}(F_f)$,
and  $(f,v)_{L^2}=F_f(v)=(u_f,v)_1$ for $v\in H^1_0(M)$.

For  each constant $\epsilon$, we consider the $H^1$-continuous 
bilinear functionals
$\mathcal{L}_{\epsilon},~ \mathcal{L}^*_{\epsilon}\, 
:H^1_0(M)\times H^1_0(M)\to \mathbb{R},$
\begin{eqnarray*}
\mathcal{L}_{\epsilon}(u,v) &= &\int_M \!g(\nabla u, \nabla v)dM+
\int_M\! g({V}, \nabla u )v\,dM + \epsilon \int_M \!\!uv \,dM, \\
\mathcal{L}^*_{\epsilon}(u,v) &=& \int_M \!\!g(\nabla u, \nabla v)dM
-\int_M \!\!g({V}, \nabla u )v\,dM
 +\int_M \!\!(\epsilon-\mathrm{div}^0({V}))uv\,dM. 
\end{eqnarray*}
The norm of a multilinear operator that is considered is the supremum norm,
$\|\mathcal{L}_{\epsilon}\|=$ $\sup_{\|u\|_1=\|v\|_1=1}
|\mathcal{L}_{\epsilon}(u,v)|$. We have 
$\mathcal{L}^*_{\epsilon}(u,v)=\mathcal{L}_{\epsilon}(v,u)$,
and continuous operators  ${L}_{\epsilon},:H^1_0(M)\to  H^1_0(M)'$,
and $\hat{L}_{\epsilon}: H^1_0(M)\to H^1_0(M)$ (similarly ${L}_{\epsilon}^*$ 
and $\hat{L}^*_{\epsilon}$), defined by
$\mathcal{L}_{\epsilon}(u,v)= {L}_{\epsilon}u(v)=(\hat{L}_{\epsilon}u, v)_1$.
We can naturally extend the operators $\mathcal{L}_{\epsilon}(u,v)$ and 
$\mathcal{L}_{\epsilon}^*(u,v)$, for $u\in H^1(M)$ and $v\in H^1_0(M)$.

Given continuous functions $f\in C(M)$ and $\phi\in C(\partial M)$, 
 \em a strong solution \em is a function $u\in C^2(M)\cap C(\bar{M})$ that
satisfies $-\Delta_V u +\epsilon u=f$ at every $p\in M$ and $u=\phi$ at 
any  $p\in \partial M$. Given $F\in H^1_0(M)'$, a \em weak solution \em 
of $-\Delta_V u+\epsilon u= F$,  $u=0$ on $\partial M$, is an element 
$ u\in H^1_0(M)$ that satisfies $\mathcal{L}_{\epsilon}(u,v)=F(v)$,  
for all $v\in H^1_0(M)$. We have 
$\hat{L}_{\epsilon}u=\hat{L}u + \epsilon J^{-1}Iu$,
${L}_{\epsilon}u={L}u+\epsilon Iu$, or in simplified notation, 
$L_{\epsilon}u=Lu+ \epsilon u=-\Delta_V u+\epsilon u$. 

Using  local  coordinate charts on $M$, we apply  theorems 
of Chapter 8 of \cite{GT},  Section 3.6 of \cite{Au}, or Section 6.3 
of \cite{Ev}, to determine regularity of solutions on open domains $M$.
If $u\in H^1_0(M)$ is a weak solution  of
 $-\Delta_V u+\epsilon u=f\in L^2(M)\subset H^1_0(M)'$, then $u\in H^2(M)$. 
Furthermore, under additional conditions,  we have the 
following conclusions.  If $f\in H^k(M)$, then 
$u\in H^{k+2}(M)$ and $\|u\|_{k+2}\leq C(\|u\|_{L^2}+ \|f\|_{k})$.
If  $f\in C^{k,\alpha}(M)$ (resp.\ $C^{\infty}(M))$,  
then $u\in C^{k+2,\alpha}(M)$ (resp. $C^{\infty}(M))$. 
For regularity up to the boundary we have the Sobolev theorem
on any smooth Riemannian manifolds with boundary (\cite{Au}, Theorem 2.30). 
Namely, if $u\in H^{k}(M)$ and $2k\geq n+2s$, where $s\geq 0$, 
then $u\in C^{2s}(\bar{M})$. In this case, since we are assuming 
$u\in H^1_0(M)$, $u=0$ on $\partial M$. Furthermore, if $s\geq 2$, 
then $u$ satisfies  $(-\Delta_V u+\epsilon u, \phi)_{L^2}=(f,\phi)_{L^2}$, 
$\forall \phi\in C_c^{\infty}(M)$. Hence, $u$ is a strong solution
of  $L_{\epsilon}u=f$. In particular, all weak solutions
$u\in H^1_0(M)$ of $-\Delta_V u+\epsilon u=0$ are in $C^{\infty}(\bar{M})$
and vanish on $\partial M$. 
The same conclusions hold for $\Delta_V^*-\epsilon$. If $\epsilon=0$ we omit 
the number in $L_{\epsilon}$.  There is uniqueness of weak solutions 
$u\in H^1_0(M)$ of $\Delta_Vu-\epsilon u =f$, for all $\epsilon\geq 0$. 
The same holds for $\Delta^*_V-\epsilon $ if $\mathrm{div}^0(V)\leq \epsilon$.
They are consequences of uniqueness of generalized Dirichlet problems
(cf. \cite{GT} Corollary 8.2 and Theorem 8.3, whose arguments are valid in any
regular Riemannian compact domain). Uniqueness results can be derived from 
maximum principles, and existence results from Fredholm theory for compact 
operators on Hilbert spaces. Both theories are used to prove the existence of a 
principal eigenvalue. We recall some maximum principles that we need 
(cf.\ \cite{Au}, Theorem 3.74).
\begin{theorem} \label{MaxPrinc} 
Assume $M$ compact with boundary, and let $\nu$ be the unit outer normal to 
$\partial M$. Let $u\in C^2(M)$ such that $\Delta_V u\geq 0$.\\[1mm]
$(a)$ (weak maximum principle) If $u\in C(\bar{M})$ and
$u_{|\partial M}\leq 0$, then $u\leq 0$.\\[1mm]
$(b)$ (Hopf maximum principle) If  $u$ achieves a nonegative maximum 
$T\geq 0$  at $p\in M$, then $u$ is constant.\\[1mm]
$(c)$ (boundary condition) Assume $u\in C(\bar{M})$ and  $u\leq 0$. 
If $u$ is not constant, and $u(p_1)=0$ at $p_1\in \partial M$, then 
$\frac{\partial u}{\partial \nu}(p_1)>0$, provided this derivative exists.\\[1mm]
The same holds for  $\Delta_V-\epsilon$ for any constant $\epsilon\geq 0$,
and for $\Delta^*_V-{\epsilon}$ if $\epsilon\geq 
\mathrm{div}^0({V})$.
\end{theorem}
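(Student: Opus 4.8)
The plan is to prove Theorem~\ref{MaxPrinc} for $\Delta_V$ by reducing to the classical elliptic maximum principle applied in local coordinate charts, where $\Delta_V u = \Delta_0 u - g(V,\nabla u)$ becomes a second-order uniformly elliptic operator $L u = a^{ij}\partial_i\partial_j u + b^i\partial_i u$ with no zeroth-order term, whose coefficients are continuous (indeed smooth). First I would recall the classical Hopf strong maximum principle (\cite{GT}, Theorems~3.1 and 3.5): if $Lu\ge 0$ on a connected open set with $L$ uniformly elliptic and $c\equiv 0$, then $u$ cannot attain an interior maximum unless it is constant; and, in the form with boundary point lemma, if $u$ attains its maximum at a boundary point $p_1$ satisfying an interior sphere condition there, then $\partial u/\partial\nu(p_1)>0$ unless $u$ is constant. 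Since $M$ is a smooth manifold and these are local statements, they transfer verbatim: around any point one picks a coordinate chart, checks that $\Delta_V$ has the stated form (the metric gives ellipticity, $-g(V,\nabla u)$ contributes only a first-order term), and applies the Euclidean results.

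For part $(b)$, suppose $u$ achieves a nonnegative maximum $T\ge 0$ at $p\in M$. Apply the interior Hopf principle on the connected component of $M$ containing $p$ to conclude $u\equiv T$ there, hence $u$ is constant (on $M$ connected, which one may assume, or on each component). For part $(a)$, assume $u_{|\partial M}\le 0$ but $u>0$ somewhere; then $u$ attains a positive maximum $T>0$ at an interior point (by compactness of $\bar M$ and since $u\le 0$ on $\partial M$), so by $(b)$ $u\equiv T>0$ on $\bar M$, contradicting $u_{|\partial M}\le 0$. Hence $u\le 0$. For part $(c)$, assume $u\in C(\bar M)$, $u\le 0$, $u$ not constant, and $u(p_1)=0$ at $p_1\in\partial M$. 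Then $u$ attains its maximum value $0$ at the boundary point $p_1$, and since $u$ is not constant it does not attain this maximum in the interior (by $(b)$ applied with $T=0$). The interior sphere condition holds at $p_1$ because $\partial M$ is smooth, so the Hopf boundary point lemma in the chart gives $\partial u/\partial\nu(p_1)>0$ whenever this one-sided derivative exists.

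For the operator $\Delta_V-\epsilon$ with $\epsilon\ge 0$: here the associated elliptic operator has zeroth-order coefficient $c=-\epsilon\le 0$, and the classical Hopf principle applies to $Lu - \epsilon u\ge 0$ with $c\le 0$ provided the relevant maximum value is nonnegative (\cite{GT}, Theorems~3.1, 3.5). Concretely, if $(\Delta_V-\epsilon)u\ge 0$ and $u$ attains a nonnegative interior maximum $T\ge 0$, then at that point $\Delta_V u\ge \epsilon u = \epsilon T\ge 0$, so one is again in the $c\equiv 0$ situation locally near the maximum, and the same three conclusions follow by the identical arguments. Finally, for the formal adjoint $\Delta_V^*v = \Delta_0 v + g(V,\nabla v) + \mathrm{div}^0(V)\,v$, the associated operator has zeroth-order coefficient $c=\mathrm{div}^0(V)-\epsilon$; assuming $\epsilon\ge\mathrm{div}^0(V)$ makes $c\le 0$, so the statement holds by the same reduction. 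The main (only) obstacle is purely bookkeeping: verifying in a coordinate chart that $\Delta_V$, which is defined invariantly via $\mathrm{tr}(\nabla^V du)$, really does reduce to a uniformly elliptic operator with continuous coefficients and vanishing zeroth-order term, so that the cited Euclidean theorems apply — but this is immediate from formula~(\ref{Laplacian}), $\Delta_V u = \Delta_0 u - g(V,\nabla u)$, since $\Delta_0$ is uniformly elliptic on the relatively compact chart and $g(V,\nabla u)$ is first order.
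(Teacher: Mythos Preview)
Your argument is correct. Note, however, that the paper does not actually prove this theorem: it is stated as a recollection of standard maximum principles, with a reference to \cite{Au}, Theorem~3.74, and no proof is given. Your proposal therefore supplies what the paper deliberately omits, and it does so along exactly the expected lines---reducing to local charts where $\Delta_V u=\Delta_0 u-g(V,\nabla u)$ is a uniformly elliptic operator with continuous coefficients and zero zeroth-order term, and invoking the classical results in \cite{GT}.

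One small point: in handling $\Delta_V-\epsilon$ you write that at an interior nonnegative maximum ``one is again in the $c\equiv 0$ situation locally near the maximum''. This heuristic is not literally an argument (the inequality $\Delta_V u\ge 0$ is only asserted at the single point), but it is unnecessary: as you also indicate, Theorems~3.5 and Lemma~3.4 of \cite{GT} already treat operators with $c\le 0$ directly, requiring only that the relevant maximum be nonnegative. Citing those results suffices without the extra sentence. The treatment of $\Delta_V^*-\epsilon$, where the zeroth-order coefficient is $\mathrm{div}^0(V)-\epsilon\le 0$ under the stated hypothesis, is likewise correct.
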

We will see that the weak maximum principle holds for $\Delta^*_V$ in 
Proposition \ref{MaxPrinc*}.

Next we recall how the Fredholm alternative theorem describes the spectrum 
of $-\Delta_V$. If $V$ is not of gradient type, the set of eigenvalues
$\Lambda(-\Delta_V)$ may have complex numbers, with complex  eigenfunctions. Moreover,  a non self-adjoint $V$-Laplacian for any 
$L^2$-inner product does not have an $L^2$-diagonalization process that splits 
$L^2(M)$  into eigenspaces.  A long computation can show that, in general, 
$\Delta_V$ is not a normal operator unless $V$ is a parallel vector field.  

From the Rayleigh principle for $V=0$, we can take $\epsilon$ sufficiently 
large such that coerciveness of $\mathcal{L}_{\epsilon}$ is satisfied, that is, 
$\mathcal{L}_{\epsilon}(u,u)\geq \beta \|u\|_1^2$, for a positive constant 
$\beta$. Hence,   the bounded linear operators $L_{\epsilon}$ and 
$L_{\epsilon}^*$ are isomorphisms with bounded inverses. 
The continuous operator $T_{\epsilon}=L_{\epsilon}^{-1}\circ I:L^2(M)\to H^1_0(M)$  
satisfies $\mathcal{L}_{\epsilon}(T_{\epsilon}f,v)=(f,v)_{L^2}$, and
it is a compact operator as an operator on $L^2(M)$ (as well as on $H^1_0(M)$). 
Similarly, we define a compact operator  $H_{\epsilon}$ from 
${L}_{\epsilon}^*$. Then  we have, for any $u, v\in C^\infty_0(M)$,
\begin{eqnarray*}
(u,v)_{L^2} &=& Iu(v)=\mathcal{L}_{\epsilon}(T_{\epsilon}u,v)\, =\, 
\mathcal{L}^*_{\epsilon}(v, T_{\epsilon}u)\\
&=& \mathcal{L}^*_{\epsilon}(H_{\epsilon} H^{-1}_{\epsilon}v, T_{\epsilon}u)
\, =\, ( H^{-1}_{\epsilon}v, T_{\epsilon}u)_{L^2}\, =\, 
(T^*_{\epsilon}H^{-1}_{\epsilon}v,u)_{L^2},
\end{eqnarray*}
where $T^*_{\epsilon}$ is the adjoint operator of $T_{\epsilon}$
for the $L^2$-norm. Thus, $H_{\epsilon}$ is just $T^*_{\epsilon}$.
Consequently, $H_{\epsilon}$ and $T_{\epsilon}$ have the same  spectrum
($\mu=0$ included). Moreover, $u$ is an eigenfunction of $T_{\epsilon}$ 
for a nonzero eigenvalue $\mu$, i.e.\ $T_{\epsilon}u=\mu u$, if and only 
if $u$ is an eigenfunction of $-\Delta_V$ for the eigenvalue 
$\lambda= \mu^{-1}-\epsilon$. From the Fredholm theory applied to the 
compact operator $T_{\epsilon}$ and its adjoint on $L^2(M)$ (as in \cite{Ev})  
(we can also use  $H^1_0(M)$ as in \cite{GT}), 
the set  of eigenvalues of $T_{\epsilon}$ is the same of its adjoint. It is 
either a finite set or a sequence converging to zero, and the dimension of 
each eigenspace is finite. Among these eigenvalues there is a distinguished 
one, the principal eigenvalue, that can be described using Krein-Rutman 
theory. This theory only requires  $\partial M$ to be of class $C^{2,\alpha}$,  
and the metric $g$ and the vector field $V$  of class $C^{1,\alpha}$ on 
$\bar{M}$. One considers $T_{\epsilon}$ as a compact operator on 
$C^{1, \alpha}_0(\bar{M})$, $T_{\epsilon}:C^{1, \alpha}_0(\bar{M})
\to C^{1, \alpha}_0(\bar{M})$.
In this case  $K=\{v\in C^{1,\alpha}_0(\bar{M}): v\geq 0\}$
is a solid cone, whose interior is given by
$K^o=\{v\in K: v>0 ~\mbox{on}~M,\partial v/\partial\nu <0\}$.
This cone $K^o$ is not empty since $\partial M$ is of class $C^1$, 
as we can see from  Lemma \ref{omegaVboundary}. We can build a function 
$v\in K^o$  gluing a constant function $v_0$ with $d_{\partial M}$,
using a partition of unity.   
If $v\in K\backslash \{0\}$, applying the maximum principle with respect to
$\Delta_V-\epsilon$,  $u=T_{\epsilon}v\in K$, and by the Hopf maximum principle
we get $u\in K^o$. Thus, $T_{\epsilon}$ is strongly positive
with respect to $K$. Then, the Krein-Rutman theory states that 
the spectral radius $r(T_{\epsilon})$ of $T_{\epsilon}$ is a simple eigenvalue
of $T_{\epsilon}$, with eigenfunction $v_{\epsilon}\in K$.
Hence, $\omega_V:=r(T_{\epsilon})v_{\epsilon}=
T_{\epsilon}v_{\epsilon }\in K^o$ is a principal eigenfunction  of $-\Delta_V$ 
for the principal eigenvalue $\lambda_V= r(T_{\epsilon})^{-1}-\epsilon$, that is,
$$ \Delta_V \omega_V + \lambda_V \omega_V =0.$$
This means the pair  $(\lambda_V,
\omega_V)$  satisfies the following conditions (1)-(4). Moreover, if we choose
$\epsilon$ sufficiently large so that the maximum principle also holds
for $\Delta^*_V-\epsilon$, then we have a similar construction of a
pair $(\lambda^*_V,\omega_V^*)$ satisfying the same conditions: 
\begin{itemize}
\item[(1)] $\lambda_V$ is a simple eigenvalue, $\omega_V>0$ on $M$, $\omega_V=0$ on
$\partial M$.

\item[(2)] $\lambda_V<Re(\lambda)$, ~$\forall \lambda\neq \lambda_V$ (complex) 
eigenvalue of $\Delta_V$.

\item[(3)] No other eigenvalue  has a positive eigenfunction on $M$.

\item[(4)] $\partial \omega_V/\partial \nu <0$, that is,  $\omega_V\in K^o$.
\end{itemize}
\noindent 
It is known that (cf.\ \cite{BNV},  and \cite{Pad} for any compact 
Riemannian domain $\bar{M}$  with smooth boundary):
\begin{itemize}
\item[(5)] $\lambda_V>0$ if and only if the weak maximum principle holds.
\end{itemize}
This is the case of $\Delta_V$ as we stated in Theorem \ref{MaxPrinc}. 
On the other hand, we have equality of the  spectral radius 
$r(T^*_\epsilon)=r(T_{\epsilon})$ and we may conclude that 
$\lambda^*_V=\lambda_V$, and so $\lambda^*_V$ is also positive. 
Note that $\omega_V, \omega_V^*\in C^{1,\alpha}_0(\bar{M})\subset H^1_0(M)$,
and so they are in $C^{\infty}(M)\cap C_0(\bar{M})$ as well.  
Furthermore, applying the Fredholm alternative theorem to $T_{\epsilon}$ 
(\cite{GT}, Theorem 5.11), we obtain, in the following proposition, 
a description of the eigenvalues of $\Delta_V$ and  $\Delta_V^*$, 
as  operators on $L^2(M)$.

\begin{proposition}\label{MaxPrinc*} 
The Laplacians $-\Delta_V$ and $-\Delta^*_V$ have the 
same set $\Lambda$ of  eigenvalues,  a discrete set that can be either
a finite set or  a sequence $|\lambda_k|\to +\infty$.
The corresponding eigenspaces are finite dimensional subspaces of 
$ L^2(M)$. The principal eigenfunctions $\omega_V$ and $\omega_V^*$ 
lie in $ C_0^{1,\alpha}(\bar{M})\subset H^1_0(M)$,
and are smooth on $M$, and vanish on $\partial M$.
The weak maximum principle also holds for $\Delta_V^*$, 
independently of $\mathrm{div}^0(V)\leq 0$ holding or not.
\end{proposition}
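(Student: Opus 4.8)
The plan is to assemble Proposition \ref{MaxPrinc*} from the pieces the preceding discussion already established, treating it essentially as a bookkeeping summary of the Fredholm/Krein-Rutman analysis, with one genuinely new assertion — the weak maximum principle for $\Delta_V^*$ without the sign hypothesis on $\mathrm{div}^0(V)$ — which requires a separate argument.

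First I would recall the operators $T_\epsilon = L_\epsilon^{-1}\circ I$ and $H_\epsilon = T_\epsilon^*$ on $L^2(M)$, for $\epsilon$ chosen large enough that both $\mathcal{L}_\epsilon$ and $\mathcal{L}_\epsilon^*$ are coercive. Since $T_\epsilon$ is compact and $H_\epsilon$ is its $L^2$-adjoint, the Fredholm theory (\cite{GT}, Theorem 5.11, or \cite{Ev}) gives that $\sigma(T_\epsilon)\setminus\{0\} = \sigma(H_\epsilon)\setminus\{0\}$, is discrete, accumulates only at $0$, with finite-dimensional eigenspaces. Via the correspondence $\lambda = \mu^{-1}-\epsilon$ established above, $\mu$ a nonzero eigenvalue of $T_\epsilon$ iff $\lambda$ an eigenvalue of $-\Delta_V$, and likewise $H_\epsilon \leftrightarrow -\Delta_V^*$, this transfers verbatim: $\Lambda(-\Delta_V)=\Lambda(-\Delta_V^*)=:\Lambda$ is discrete, either finite or with $|\lambda_k|\to+\infty$, and the eigenspaces are finite-dimensional subspaces of $L^2(M)$. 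The regularity statement — that $\omega_V,\omega_V^*\in C^{1,\alpha}_0(\bar M)\subset H^1_0(M)$ and are smooth on $M$, vanishing on $\partial M$ — was already obtained from the Krein-Rutman construction on $C^{1,\alpha}_0(\bar M)$ combined with the interior elliptic regularity and Sobolev-up-to-the-boundary statements recalled earlier; I would simply cite that discussion. I would also note $r(T_\epsilon^*)=r(T_\epsilon)$, whence $\lambda_V^*=\lambda_V$, and that by item (5) (\cite{BNV},\cite{Pad}) the positivity $\lambda_V>0$ — which holds by Theorem \ref{MaxPrinc} for $\Delta_V$ — forces $\lambda_V^*=\lambda_V>0$.

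The new point, and the one I expect to be the crux, is the weak maximum principle for $\Delta_V^*$ itself, since Theorem \ref{MaxPrinc} only guarantees it for $\Delta_V^*-\epsilon$ when $\epsilon\geq\mathrm{div}^0(V)$, i.e.\ it is not a priori available when $\mathrm{div}^0(V)$ may be positive somewhere. The trick is to use the strictly positive principal eigenfunction $\omega_V^*\in K^o$ of $-\Delta_V^*$ as a weight: given $u\in C^2(M)\cap C(\bar M)$ with $\Delta_V^* u\geq 0$ and $u_{|\partial M}\leq 0$, write $u=\omega_V^*\,\psi$ and compute that $\Delta_V^*(\omega_V^*\psi)$ expands, using $\Delta_V^*\omega_V^* = -\lambda_V^*\omega_V^*$ with $\lambda_V^*>0$, into an expression of the form $\omega_V^*\big(\Delta_0\psi + g(W,\nabla\psi)\big) - \lambda_V^*\omega_V^*\psi$ for a suitable vector field $W$ built from $V$ and $\nabla\log\omega_V^*$ (the first-order term having no zeroth-order part after the substitution, because the $\mathrm{div}^0(V)$ term is absorbed into the eigenfunction identity). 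Thus on $M$, $\omega_V^*\big(\mathcal{A}\psi - \lambda_V^*\psi\big)\geq 0$ with $\mathcal{A}$ a pure second-plus-first order operator and $\lambda_V^*>0$; since $\omega_V^*>0$ on $M$, this says $\mathcal{A}\psi\geq\lambda_V^*\psi$, and now the standard weak maximum principle for operators with nonpositive zeroth-order coefficient (here the coefficient is $-\lambda_V^*<0$) applies to $\psi$, which is $\leq 0$ near $\partial M$ (since $u\leq 0$ there and, via the Hopf boundary behaviour $\partial\omega_V^*/\partial\nu<0$, $\psi$ extends controlledly — or one argues on $\{\psi>0\}\Subset M$ directly). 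One concludes $\psi\leq 0$, hence $u\leq 0$. The care needed is only in handling the behaviour of $\psi=u/\omega_V^*$ up to $\partial M$ where $\omega_V^*$ vanishes; this is managed by working on sublevel-avoiding compact subsets, exactly as in the classical Berestycki--Nirenberg--Varadhan argument, which is essentially what is being recapitulated here.
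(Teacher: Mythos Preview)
Your proposal is correct, and for the spectral and regularity assertions it matches the paper exactly: the proposition is indeed a summary of the Fredholm/Krein--Rutman discussion preceding it, and your account of that bookkeeping is accurate.

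The one place you diverge is the weak maximum principle for $\Delta_V^*$. You treat this as the genuinely new point requiring a separate conjugation argument $u=\omega_V^*\psi$, but the paper handles it more cheaply: item (5) from \cite{BNV,Pad} is stated as an \emph{equivalence} --- ``$\lambda>0$ if and only if the weak maximum principle holds'' --- valid for any second-order elliptic operator of this type, hence in particular for $\Delta_V^*$. Having already established $\lambda_V^*=\lambda_V>0$ via $r(T_\epsilon^*)=r(T_\epsilon)$, the paper simply reads the reverse implication of (5) to conclude. Your direct argument is essentially a reproof of that reverse implication (it is the classical BNV weight trick), so it is correct and self-contained, but it duplicates work the paper outsources to the citation. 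The boundary subtlety you flag --- controlling $\psi=u/\omega_V^*$ near $\partial M$ where both numerator and denominator may vanish --- is real and is precisely why the paper prefers to invoke the cited result rather than redo it.
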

The common positive principal  eigenvalue of $-\Delta_V$ and of $-\Delta_V^*$
 will be denoted by $\lambda_V^*$.

\section{Model spaces}

A spherically symmetric space is a warped product space, 
$N^{\rho}=[0, l)\times_{\rho} \mathbb{S}^{m-1}$,
endowed with the  warped  metric,
$$g_{\rho}= dt^2 + \rho^2(t)d\sigma^2,$$
where $\rho\in C^{\infty}([0,l))$ satisfies $\rho>0$ on $(0,l)$,   
$\rho(0)=\rho''(0)=0$, and $\rho'(0)=1$. Here, $d\sigma^2$ denotes the 
usual metric on the unit $(m-1)$-sphere. The origin of $N^{\rho}$ 
is the point $p_{\rho}$ defined by identifying all pairs $(0,\xi)$, where
$\xi \in \mathbb{S}^{m-1}$. The metric is smooth away from the origin,
and smooth at $p_{\rho}$ if we assume all even derivatives of $\rho$ 
vanish at $t=0$. The  distance function to $p_{\rho}$ is given by $r(t,\xi)=t$. 
Hence, $t\partial_t=\frac{1}{2}\nabla r^2$ is a smooth vector field.
We consider closed geodesic balls $\bar{M}^{\rho}:=\bar{B}_{r_0}(p_{\rho})$, 
centered at $p_{\rho}$ and of radius $r_0<l$. The radial sectional  curvature, 
and the radial Ricci curvature of $N^{\rho}$ are defined  at each point
$p=(t,\xi)$, (cf.\  \cite{FMS}) by
$$K^{\rho}(\partial_t, W)=-\frac{\rho''(t)}{\rho(t)},
\quad \mathrm{Ricci}^{\rho}
(\partial_t, \partial_t)=-(m-1)\frac{\rho''(t)}{\rho(t)},$$
\noindent
respectively, where  $W\in T_{\xi}\mathbb{S}^{m-1}$ has unit $g_{\rho}$-norm. 

A function $F(t,\xi)$ is said radial if it only depends on $t$,
that is, $F(t,\xi)=F(t)$. We will consider 
vector fields in the radial direction, depending on $t$ only, that is,
$ {V}^{\rho} = h(t)\partial_t=\nabla H$, 
where  $H\in C^{\infty}([0, r_0])$,  and $ h(t)=H'(t)$. 
We are always assuming that $h(t)$ is of the form  $h(t)=t\tilde{h}(t)$ for 
$t$ near $0$, and for some smooth function $\tilde{h}$. Hence, $h(0)=0$ and 
$V^{\rho}=\tilde{h}(t)(t\partial_t)$ is smooth on $[0, r_0]$.
This vector field is of gradient type, and so it defines a Bakry-\'{E}mery 
model space $(N^{\rho},g_{\rho}, e^{-H}dV)$.  We will denote the 
$V^{\rho}$-Laplacian $\Delta_{V_\rho}$, by the $H$-Laplacian 
$\Delta^{\rho}_H$. 
Fixing a $g_{\rho}$-orthonormal basis $\partial_t, e_i$, with 
 $e_i\in T_{\xi}\mathbb{S}^{n-1}$, for $1\leq i\leq  m-1$, we have
$$\begin{array}{l}
\nabla^0_{\partial_t}{{V}^{\rho}}=h'(t)\partial_t,~~
\nabla^0_{e_i}{{V}^{\rho}}=\rho' (t)\frac{h (t)}{\rho(t)}e_i,\\
\mathrm{div}^0({V}^{\rho})=h' (t) +(m-1)\frac{h(t)}{\rho(t)}\rho'(t).
\end{array}$$
Thus,   $\lim_{t\to 0^+}\mathrm{div}^0({V}^{\rho})(t)=m h'(0)$.
Consider the function 
\begin{equation}
 p(t)=\rho^m(t)e^{-H(t)}, \quad \mbox{where}~~ H(t)=h'(t).
\end{equation}
The Laplacian for the Levi-Civita connection
 has the following expression (cf. \cite{FMS}),
$$\Delta^{\rho}_0u= \frac{d^2u}{dt^2} + (m-1)\frac{\rho'}{\rho}\frac{du}{dt}
+\frac{1}{\rho^2}\Delta_{\mathbb{S}^{m-1}}u.$$
Hence, the $H$-Laplacian of a function $u(t,\xi)$ is given by,
\begin{equation}\label{H-Laplacian}
\Delta_{H}^{\rho} u=\frac{d^2u}{dt^2} + \frac{p'}{p}
\frac{d u}{dt}+\frac{1}{\rho^2}\Delta_{\mathbb{S}^{m-1}}u.
\end{equation}
In the following proposition we describe the properties of the principal 
eigenvalue $\omega_{\rho,H}$ of the $H$-Laplacian 
on $M^{\rho}$, with Dirichlet boundary conditions:
\begin{proposition} \label{radialmodel} Let $V^{\rho}=h(t)\partial_t$, 
with $h(0)=0$. Then, $\omega_{\rho,H}$ is  radial, and for each 
$ t\in (0, r_0)$ it satisfies
\begin{eqnarray}\label{principaleigenmodel}
\omega_{\rho,H}''(t) +\frac{p'(t)}{p(t)}\omega_{\rho, H}' (t)
+\lambda_{\rho,H}\omega_{\rho,H}(t)=0.
\end{eqnarray}
Furthermore,  $\omega_{\rho,H}(t)>0$ and $\omega_{\rho,H}'(t)<0$ on 
$(0,r_0)$, $\omega_{\rho,H}(r_0)=0=\omega'_{\rho,H}(0)$, and
 $\omega_{\rho,H}'(r_0)<0$.
\noindent
\end{proposition}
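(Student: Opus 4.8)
The plan is to use separation of variables together with the Krein--Rutman characterization of $\omega_{\rho,H}$ established in the previous section (applied now to the model space $\bar{M}^{\rho}$, with $V=V^{\rho}$ of gradient type $=\nabla H$). First I would invoke the general existence result: since $\bar{M}^{\rho}=\bar{B}_{r_0}(p_{\rho})$ is a compact domain with smooth boundary, Proposition \ref{MaxPrinc*} gives a principal eigenfunction $\omega_{\rho,H}\in C^{1,\alpha}_0(\bar{M}^{\rho})$, positive on $M^{\rho}$, vanishing on $\partial M^{\rho}$, simple, with $\Delta^{\rho}_H\omega_{\rho,H}+\lambda_{\rho,H}\omega_{\rho,H}=0$; by elliptic regularity it is smooth on $M^{\rho}$, and since the coefficients in \eqref{H-Laplacian} are smooth up to $\partial M^{\rho}$ away from the origin, it is smooth up to $r=r_0$.

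\smallskip

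The key point is radiality. I would argue that the rotation group $\mathrm{O}(m)$ acts on $N^{\rho}$ by isometries fixing $p_{\rho}$ and preserving $V^{\rho}=h(t)\partial_t$ (hence commuting with $\Delta^{\rho}_H$) and preserving $\bar{M}^{\rho}$; therefore, for any $A\in \mathrm{O}(m)$, the function $\omega_{\rho,H}\circ A$ is again a principal eigenfunction, positive and $L^2$-normalized. By simplicity of $\lambda_{\rho,H}$ (item (1)), $\omega_{\rho,H}\circ A = \omega_{\rho,H}$ for all $A$, so $\omega_{\rho,H}$ depends on $t$ alone. (Equivalently, one could average $\omega_{\rho,H}$ over $\mathbb{S}^{m-1}$ and use that the average is still a positive eigenfunction.) Substituting a radial function into \eqref{H-Laplacian} kills the spherical Laplacian term and yields precisely the ODE \eqref{principaleigenmodel} on $(0,r_0)$; the boundary condition $\omega_{\rho,H}(r_0)=0$ is inherited, and $\omega_{\rho,H}'(0)=0$ follows from smoothness at the origin $p_{\rho}$ (a smooth radial function on a ball has vanishing radial derivative at the centre).

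\smallskip

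It remains to prove the sign statements $\omega_{\rho,H}>0$, $\omega_{\rho,H}'<0$ on $(0,r_0)$, and $\omega_{\rho,H}'(r_0)<0$. Positivity on $(0,r_0)$ is again from item (1). For $\omega_{\rho,H}'(r_0)<0$: by item (4), $\omega_{\rho,H}\in K^o$, i.e.\ $\partial\omega_{\rho,H}/\partial\nu<0$ on $\partial M^{\rho}$, and the outward normal at $r=r_0$ is $\partial_t$, so $\omega_{\rho,H}'(r_0)<0$; alternatively this is the Hopf boundary lemma (Theorem \ref{MaxPrinc}(c)). For $\omega_{\rho,H}'<0$ on all of $(0,r_0)$ I would use the ODE directly. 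Rewrite \eqref{principaleigenmodel} as $(p\,\omega_{\rho,H}')' = -\lambda_{\rho,H}\,p\,\omega_{\rho,H}$; since $p>0$ on $(0,r_0)$ (as $\rho>0$ and $e^{-H}>0$), $\lambda_{\rho,H}>0$ by item (5) and Theorem \ref{MaxPrinc}, and $\omega_{\rho,H}>0$, the right-hand side is strictly negative, so $p\,\omega_{\rho,H}'$ is strictly decreasing on $(0,r_0)$. At $t=0$ one has $p(0)=0$ and $\omega_{\rho,H}'(0)=0$, and integrating $p\,\omega_{\rho,H}'(t)=-\lambda_{\rho,H}\int_0^t p\,\omega_{\rho,H}\,ds<0$ for $t\in(0,r_0)$; dividing by $p(t)>0$ gives $\omega_{\rho,H}'(t)<0$ on $(0,r_0)$. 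The main obstacle is the behaviour at the origin: one must check carefully that the ODE derivation and the limits $p(0)=0$, $(p\omega_{\rho,H}')(0)=0$ are justified, i.e.\ that the smoothness of $\omega_{\rho,H}$ as a function on the manifold at $p_{\rho}$ (not merely on $(0,r_0)$) forces the correct boundary behaviour of the radial profile; this is exactly the standard analysis of radial functions on a warped-product ball and presents no real difficulty once the regularity of $\omega_{\rho,H}$ up to $p_{\rho}$ is in hand.
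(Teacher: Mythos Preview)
Your proof is correct, but the route to radiality differs from the paper's. The paper expands an arbitrary eigenfunction $u(t,\xi)$ in spherical harmonics $G_{k,\alpha}$ on $\mathbb{S}^{m-1}$, obtaining for each mode $k$ a one-dimensional Sturm--Liouville problem $(pa')' + (\lambda - \nu_k\rho^{-2})pa=0$ with boundary conditions $a(r_0)=a'(0)=0$; since the only spherical harmonic that does not change sign is the constant $G_{0,1}$, the positive principal eigenfunction must sit entirely in the $k=0$ piece, hence is radial. Your argument bypasses this by invoking the $\mathrm{O}(m)$ symmetry of the warped product and simplicity of $\lambda_{\rho,H}$ from Krein--Rutman, which is shorter and entirely adequate for the proposition as stated. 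The paper's decomposition, on the other hand, is not wasted: it immediately yields the description of the full Dirichlet spectrum of $-\Delta^{\rho}_H$ as the union over $k$ of the one-dimensional spectra $\{\lambda_{k,i}\}$ (the Proposition following Proposition \ref{radialmodel}), something your symmetry argument does not give. For the derivative sign both proofs coincide: the paper integrates $(p\,\omega')'=-\lambda_{\rho,H}\,p\,\omega$ from $0$ exactly as you do.
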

\noindent
\begin{proof} In this proof we will denote $\mathbb{S}^{m-1}$ by $S$. Let 
$0<\lambda_1< \lambda_2\leq \ldots \leq \lambda_l\leq \ldots \to +\infty$
be the set of eigenvalues of $\Delta_H^{\rho}$ on $M^{\rho}$ with
 Dirichlet boundary conditions. Let $u(t,\xi)\in C_0^{\infty}(M^{\rho})$ 
be an eigenfunction for one of the eigenvalues $\lambda=\lambda_l$. 
As in \cite{Cha}, p.\ 40-43 (for $H=0$),  we will decompose $u(t,\xi)$ in a sum of 
products of an eigenfunction of a one-dimensional eigenvalue problem, with 
a homogeneous harmonic polynomial. Let $\nu_k=k(k+m-1)$, $k=0,1,\ldots$, 
be the eigenvalues of the Laplacian $-\Delta_{{S}}$ on the $(m-1)$-sphere 
for the  closed eigenvalue problem. There is a complete orthornormal system of 
eigenfunctions $G_{k,\alpha}$ of $-\Delta_{{S}}$, defining a basis of 
$L^2(\mathbb{S}^{m-1})$. The index $k$ corresponds to the eigenvalue $\nu_k$, 
and $\alpha$ runs from  $1$ to $N_k$, the multiplicity of $\nu_k$.  
If we fix $t$, then   
\begin{equation}\label{representation}
u(t,\xi)=\sum_k\sum_{\alpha} a_{k,\alpha}(t)G_{k,\alpha}(\xi),
\end{equation}
for some constants $a_{k,\alpha}(t)=(u(t,\cdot),G_{k,\alpha})_{L^2(S)}$. 
We have
$$  \|u^2(t,\cdot)\|_{L^2(S)}^2 = \sum_{k,\alpha}a_{k,\alpha}^2(t),\quad
a_{k,\alpha}^{(s)}(t)  = \left(\frac{d^su}{dt^s}(t,\cdot)\,,\, G_{k,\alpha}\right)_{L^2(S)},
 ~s=0,1,2,\ldots.$$
Since $\nu_kG_{k,\alpha}=-\Delta_SG_{k,\alpha}$, using (\ref{H-Laplacian})
and $\Delta^{\rho}_Hu=-\lambda u$, we have
\begin{eqnarray*}
\nu_ka_{k,\alpha}(t) &=&-(u(t,\cdot ), \Delta_SG_{k,\alpha})_{L^2(S)}=
-(\Delta_Su(t,\cdot),G_{k,\alpha})_{L^2(S)}\\
&=&\rho^2\LA{(}-\lambda u+ \sum_{l,\beta}a''_{l,\beta}(t)G_{l,\beta}
+\sum_{l,\beta}\frac{p'(t)}{p(t)}a'_{l,\beta}(t)G_{l,\beta}
~,~G_{k,\alpha}\LA{)}_{L^2(S)}\\
&=& \rho^2\La{[}-\lambda a_{k,\alpha}(t)-
 \frac{1}{p(t)}(p a'_{k,\alpha} )'(t)\La{]}.
\end{eqnarray*}
Hence,  for each $k$,  if $a_{k,\alpha}\neq 0$ for some $\alpha$,  then $\lambda=\lambda_l$   
is a solution of the one-dimensional eigenvalue problem
\begin{equation}
 (pa')'(t) +(\lambda -\rho^{-2}\nu_k)p(t)a(t)=0, \label{eigenModel1}
\end{equation}
or equivalently,  a solution of
\begin{equation}
a''(t)+\left((m-1)\frac{\rho(t)}{\rho(t)}-h(t)\right)a'(t)+(\lambda-\nu_k\rho^{-2})a(t)=0. 
\label{eigenModel2}
\end{equation}
From the Dirichlet boundary conditions on $u$, and smoothness of
$u$ at $t=0$,  we must impose the following boundary conditions on $a(t)$,  
\begin{equation}\label{bc}
a(r_0)=0 = a' (0).
\end{equation}
Therefore, we conclude that each eigenvalue $\lambda_l$ of the $H$-Laplacian 
arises as a solution of at least one of the eigenvalue problems 
(\ref{eigenModel2}), with boundary condition (\ref{bc}), with respect to 
some $k$. Moreover, each $a_{k,\alpha}$ lies in the eigenspace $E_{k,\lambda}$ 
of the eigenvalue problem (\ref{eigenModel2}), when $k$ is fixed.

Reciprocally, let us  we fix an eigenfunction on the $(m-1)$-sphere, $G_{k}(\xi)$, 
with eigenvalue $\nu_k$, and an eigenfunction $a_k(t)$ of the eigenvalue 
problem  (\ref{eigenModel2})--(\ref{bc}), with respect to $\nu_k$ and  with 
eigenvalue $\lambda$.  Set   $u(t,\xi):= a_k(t)G_{k}(\xi)$. It satisfies
$u=0$ on $\partial M^{\rho}$, and for any $0<t<r_0$ and $\xi \in S$, we have
$$\Delta^{\rho}_H u(t,\xi)+\lambda u(t,\xi)
= -\rho^{-2}a_k(t)\nu_k G_{k}(\xi) +
 \frac{a_k(t)}{\rho^2(t)}\nu_kG_{k}(\xi)=0.
$$
Thus,  $u(t,\xi)$ is an eigenfunction of $-\Delta^{\rho}_H$ with eigenvalue 
$\lambda$.
In particular, for each $k$, the set of eigenvalues of the
one-dimensional  eigenvalue problem  (\ref{eigenModel2})--(\ref{bc})  
is a subset  $\{\lambda_{k,1},\ldots \lambda_{k,2}\ldots\}$ of 
$\{\lambda_1,\lambda_2,\ldots\}$.

Now we consider $k$ fixed, and  two solutions of  (\ref{eigenModel2}),  
$a_{k, i}$, $a_{k,j}$,  with eigenvalues $\lambda_{k,i}$ and $\lambda_{k,j}$, 
respectively. Then
$\int_{0}^{r_0} (pa'_{k,i})' a_{k,j}dt =-\int_0^{r_0} (pa'_{k,j}a'_{k,i})dt~$.
On the other hand,
$$
\int_{0}^{r_0} (pa'_{k,i})' a_{k,j}dt= -\lambda_{k,i}\int _0^{r_0}
p\,a_{k,i}a_{k,j}dt + \nu_k\int\rho^{-2}pa_{k,i}a_{k,j}dt.
$$
Hence, $(\lambda_{k,j}-\lambda_{k,i})\int_0^{r_0} p\,a_{k,i}a_{k,j}\, dt =0$.
That is, if $i\neq j$,  $a_{k,i}$ and $a_{k,j}$ are $L^2_{p(t)}$ orthogonal
on $[0,r_0]$.
For $i=j$ we have, 
$$\int_0^{r_0} p (a'_{k,i})^2dt = \lambda_{k,i}\int _0^{r_0}
 p\,(a_{k,i})^2dt - \nu_k\int_{0}^{r_0}\rho^{-2}p\, (a_{k,i})^2\, dt.$$
We also note that, if $k=0$, (\ref{eigenModel1}) gives 
\begin{equation}
p(t)a_{0,i}' (t)=-\lambda_{0,i}\int_0^tp(\tau)a_{0,i}(\tau)d\tau. \label{Maisuma}
\end{equation}
Similarly, if $a_k(t)$ and $a_s(t)$ are solutions of (\ref{eigenModel2})
for the same eigenvalue $\lambda$, associated with $\nu_k$ and $\nu_s$, 
respectively, then $(\nu_k-\nu_s)\int_0^{r_0}p\rho^{-2}a_ka_sdt=0$.
Hence, for $k\neq s$, $a_{k}$ and $a_{s}$ are $L^2_{p\rho^{-2}}$-orthogonal
on $[0,r_0]$.
Now, the volume element of $(N^{\rho}, g_{\rho})$ is given by  
$\rho^{m-1}dt\wedge dS$, where $dS$  is the volume element of $\mathbb{S}^{m-1}$. 
Thus,  the norm of the eigenfunction $u(t,\xi)$ on
the $e^{-H}$-weighted $L^2$-space on $M^{\rho}$ is given by
\begin{eqnarray*}
\|u\|_{L^2_{e^{-H}}(M^{\rho})}^2&=&\int_{0}^{r_0} \rho^{m-1}(t)e^{-H(t)}
\left(\int_{S}u^2(t,\xi)dS \right)dt\nonumber \\
&=&\sum_{k,\alpha}  \int_0^{r_0}p(t)a_{k,\alpha}^2(t)dt
= \sum_{k,\alpha}\|a_{k,\alpha}\|_{L^2_{p(t)}}^2. 
\end{eqnarray*}
The arguments given in \cite{Cha}, pp.\ 41, are valid 
concerning the eigenvalue problem   (\ref{eigenModel2}),
since $p(t)=\rho(t)^{m-1}e^{-H(t)}$ is qualitatively the same as $H=0$ 
(see also \cite{Zett}, p.\ 209). Thus, for each $k=0,1,\ldots$,
the solutions of the eigenvalue problem 
(\ref{eigenModel2}),   with boundary condition (\ref{bc}), 
consists of an  increasing sequence $\lambda_{k,i}$,  converging to infinity 
when $i\to +\infty$. Each $\lambda_{k,i}$ is simple, 
and the corresponding eigenfunction $a_{k,i}$ ($L^2_{p(t)}$-normalized) 
has $i-1$ zeros on $(0,r_0)$. 
Now,  $u$ in (\ref{representation}) is a $-\Delta_H^{\rho}$-eigenfunction 
for at least one of the eigenvalues $\lambda_{k,i}=\lambda_l$. Then, for  each $\alpha$,
$a_{k,\alpha}(t)= c_{k,i,\alpha}a_{k,i}(t)$, for some constants $c_{k,i,
\alpha}$. Consequently,  we have the following representation of a 
$\lambda_l$-eigenfunction $u$
as a $L^2_{e^{-H}}(M^{\rho})$-convergent series 
$$ u(t,\xi)=\!\!\!\!\!
\sum_{ \{k, i:\, \lambda_{k,i}=\lambda_l\}}\!\!\!\!\!\!
a_{k,i}(t)G_{k,i}(\xi), \quad\mbox{where~}~G_{k,i}(\xi)=
\!\sum_{\alpha} c_{k,i,\alpha}G_{k,\alpha}(\xi). $$
Moreover,  $~~\sum_{\alpha}\|a_{k,\alpha}\|^2_{L^2_{p(t)}}
\!\!\!=\sum_{\alpha}c_{k,i,\alpha}^2\!\!
=\|G_{k,i}\|_{L^2(S)}^2$, ~~and so
 $$\quad\|u|^2_{L^2_{e^{-H}}(M^{\rho})}=\!\!
\sum_{\{k,i:\lambda_{k,i}=\lambda_l\}}\!\!
\|G_{k,i}\|^2_{L^2(S)}=\!\!
\sum_{\{ k,i:\lambda_{k,i}=\lambda_l, \alpha \}}\!\!
c_{k,i,\alpha}^2.$$
Note that the only eigenfunction $G_{k,\alpha}$ that 
does not change of sign in $S$ is the constant function $G_{0,1}=1$.  
Now, the principal eigenvalue of $-\Delta^{\rho}_H$, is the lowest 
eigenvalue  $\lambda_{\rho,H }$, and the first eigenfunction,  
$\omega_{\rho,H}$, is positive  on $M^{\rho}$, 
only vanishing along the boundary. 
Hence,  $\omega_{\rho,H}(t,\cdot)$ corresponds to  the lowest eigenvalue of 
(\ref{eigenModel2}) with $k=0$. Consquently, $\lambda_{\rho,H}=\lambda_{0,1}$, 
and $\omega_{\rho,H}(t,\xi)=a_{1,0}(t)$, 
up to a multipicative positive constant. It is radial, positive for 
$t\in [0, r_0)$,   vanishes at $t=r_0$, and satisfies (\ref{eigenModel2}),
and thus,  (\ref{principaleigenmodel}).
 From (\ref{Maisuma}), and since $a_{0,1}(s)>0$, we conclude that the sign 
of $a'_{0,1}(t)$ is the same  of $-\lambda_1$, and $a' _{0,1}(r_0)<0$. This
completes the proof.  \qed
\end{proof}

In the above proof, we also have obtained the following conclusions.
\begin{proposition}
(1) For each $k$ fixed, the one-dimensional eigenvalue problem  (\ref{eigenModel2}) with 
boundary condition (\ref{bc}),  consists of an increasing sequence of simple 
eigenvalues, $0<\lambda_{k,1}< \lambda_{k,2}< \ldots \to +\infty$. 
Furthermore, eigenfunctions  $a_{k, i}$ and $a_{k,j}$, with respect to 
different eigenvalues $\lambda_{k,i}$ and $\lambda_{k,j}$, are 
$L^2_{p(t)}$-orthogonal. This means  
$ (a_{k,i},a_{k,j})_{L^2_p(0,r_0)}:=\int_0^{r_0} a_{k, i}a_{k,j} p \,dt=0$, for all $i\neq j$.
Moreover, if $k=0$,  $\|a_{0,i}' \|^2_{L^2_p(0,r_0)}=
\lambda_{0,i}\|a_{0,i} \|^2_{L^2_p(0,r_0)}$.\\[1mm]
(2) The discrete set of eigenvalues $\lambda_l\to +\infty$ of $-\Delta^{\rho}_H$
on the ball of radius $r_0$, $M^{\rho}$, with Dirichlet boundary condition, 
consists of the set $\{ \lambda_{k,i},  k=0,1,2, \ldots, i=1,2,\ldots\}$. 
Furthermore, each function $F\in L^2_{e^{-H}}(M^{\rho})$
can be expressed as an $L^2_{e^{-H}}(M^{\rho})$-convergent sum of
$-\Delta^{\rho}_H$-eigenfunctions $u_l(t,\xi)$, and so, 
 as a convergent sum  whose terms consists of  products of an  eigenfunction of
(\ref{eigenModel2}) with an eigenfunction of $-\Delta_{\mathbb{S}^{m-1}}$.
\end{proposition}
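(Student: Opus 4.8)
The plan is to read both claims off the constructions already carried out in the proof of Proposition~\ref{radialmodel}, supplying only a short integration by parts for the $k=0$ identity and a double-expansion argument for part (2).

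For part (1), I would recall that, with $k$ fixed, (\ref{eigenModel1})--(\ref{bc}) is the regular Sturm--Liouville eigenvalue problem $(pa')'+(\lambda-\nu_k\rho^{-2})p\,a=0$ on $[0,r_0]$ with weight $p(t)=\rho^{m-1}(t)e^{-H(t)}$, Dirichlet condition $a(r_0)=0$, and the natural condition $a'(0)=0$ forced by smoothness of $u$ at the origin together with $p(0)=0$. As noted in the proof of Proposition~\ref{radialmodel}, following \cite{Cha}, p.~41, and \cite{Zett}, p.~209, the weight $p$ is qualitatively the same near $t=0$ as in the case $H=0$, so the classical theory applies and gives a strictly increasing sequence $0<\lambda_{k,1}<\lambda_{k,2}<\cdots\to+\infty$ of simple eigenvalues with mutually $L^2_p(0,r_0)$-orthogonal eigenfunctions; positivity follows from the Rayleigh-type identity $\lambda_{k,i}\|a_{k,i}\|^2_{L^2_p}=\|a'_{k,i}\|^2_{L^2_p}+\nu_k\int_0^{r_0}\rho^{-2}p\,a_{k,i}^2\,dt$ already derived in that proof (together with the fact that $a_{k,i}\not\equiv 0$ and $a_{k,i}(r_0)=0$ force $a'_{k,i}\not\equiv 0$), and the orthogonality relation $(\lambda_{k,j}-\lambda_{k,i})\int_0^{r_0}p\,a_{k,i}a_{k,j}\,dt=0$ was verified there as well. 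For the last assertion, take $k=0$, so $\nu_0=0$; multiplying (\ref{eigenModel1}) by $a_{0,i}$ and integrating by parts on $(0,r_0)$, the boundary term $[p\,a'_{0,i}a_{0,i}]_0^{r_0}$ vanishes since $a_{0,i}(r_0)=0$ and $p(0)=0$, giving $\|a'_{0,i}\|^2_{L^2_p(0,r_0)}=\lambda_{0,i}\|a_{0,i}\|^2_{L^2_p(0,r_0)}$, i.e.\ the identity displayed just before (\ref{Maisuma}) specialized to $k=0$.

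For part (2), the equality of the two sets of eigenvalues is exactly what the proof of Proposition~\ref{radialmodel} establishes in both directions: the separation-of-variables decomposition (\ref{representation}) shows each Dirichlet eigenvalue $\lambda_l$ of $-\Delta^\rho_H$ equals some $\lambda_{k,i}$, while the computation $\Delta^\rho_H(a_{k,i}G_k)+\lambda_{k,i}\,a_{k,i}G_k=0$ shows the converse. For the expansion, I would take $F\in L^2_{e^{-H}}(M^\rho)$, freeze $t$, and expand in the complete orthonormal system $\{G_{k,\alpha}\}$ of $L^2(\mathbb{S}^{m-1})$, obtaining $F(t,\xi)=\sum_{k,\alpha}b_{k,\alpha}(t)G_{k,\alpha}(\xi)$ with $b_{k,\alpha}\in L^2_p(0,r_0)$ and $\|F\|^2_{L^2_{e^{-H}}(M^\rho)}=\sum_{k,\alpha}\|b_{k,\alpha}\|^2_{L^2_p(0,r_0)}$, using the volume element $\rho^{m-1}\,dt\wedge dS$ exactly as in the earlier norm computation. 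For each fixed $k$, completeness of $\{a_{k,i}\}_i$ in $L^2_p(0,r_0)$ gives $b_{k,\alpha}=\sum_i c_{k,i,\alpha}a_{k,i}$; substituting back produces an $L^2_{e^{-H}}(M^\rho)$-convergent series $F=\sum_{k,i,\alpha}c_{k,i,\alpha}\,a_{k,i}(t)G_{k,\alpha}(\xi)$, each term a product of a solution of (\ref{eigenModel2}) with an eigenfunction of $-\Delta_{\mathbb{S}^{m-1}}$, and grouping the terms with $\lambda_{k,i}=\lambda_l$ yields the stated expansion of $F$ as a convergent sum of $-\Delta^\rho_H$-eigenfunctions.

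The main obstacle is the $L^2_p(0,r_0)$-completeness of the one-dimensional eigenfunctions $\{a_{k,i}\}_i$, since the weight $p$ degenerates at $t=0$: one has to be sure that the singular endpoint at the origin is of the type for which the classical Sturm--Liouville eigenfunction expansion still holds, which is precisely the point flagged in the proof of Proposition~\ref{radialmodel} that $p$ behaves there like the $H=0$ weight, so that \cite{Cha} and \cite{Zett} apply. A secondary technical point is the legitimacy of interchanging the sums over $\alpha$, $i$ and $k$, which follows from Parseval's identity in each variable together with the non-negativity of all the squared coefficients involved.
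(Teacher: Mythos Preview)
Your proposal is correct and essentially mirrors the paper, which gives no separate proof but simply declares that the proposition is contained in the proof of Proposition~\ref{radialmodel}; you correctly locate which ingredients of that proof yield each claim in part~(1) and the equality of the two eigenvalue sets in part~(2). The one place where you supply something extra is the expansion of an arbitrary $F\in L^2_{e^{-H}}(M^\rho)$: the paper's implicit route is to invoke the completeness of the $-\Delta^\rho_H$-eigenfunctions in $L^2_{e^{-H}}(M^\rho)$ already recorded in Section~2 (from \cite{LR}, since $V^\rho=\nabla H$ is gradient-type) and then use the representation of each eigenfunction as a finite sum of products $a_{k,i}(t)G_{k,\alpha}(\xi)$ established in the previous proof, whereas you carry out a direct double expansion (first in spherical harmonics, then in the one-dimensional eigenfunctions), which needs the $L^2_p(0,r_0)$-completeness of $\{a_{k,i}\}_i$ for each fixed $k$. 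Both arguments are valid and rest on the same references \cite{Cha,Zett}; yours makes the product-expansion statement slightly more explicit, while the paper's avoids invoking singular Sturm--Liouville completeness separately for every $k$.
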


\section{Min-max formulas for the principal eigenvalue}

We first extend to the Riemannian case a min-max formula obtained by Protter-Winberger \cite{PW} in 1966 (see also \cite{BNV}), for open regular domains of 
$\mathbb{R}^n$. 
We assume that $\bar{M}$ is a compact regular domain of a complete Riemannian
manifold $(N,g)$.
 We consider the following cone of $H^1_0(M)$,
\begin{equation}\label{D+}
 \mathcal{D}_0^+=\{u\in C^2(M)\cap C(\bar{M})\cap H^1_0(M): 
u>0 ~\mbox{on}~M, u_{\partial M}=0 \}.
\end{equation}
\begin{theorem}[Min-Max formula] \label{Barta11}
The following min-max formula holds for the principal eigenvalue of 
$\Delta_V$,
$$\lambda^*_V=\sup_{u\in \mathcal{D}^+_0} 
\inf_{p\in M} -\frac{\Delta_V u(p)}{u(p)}= \inf_{u\in \mathcal{D}^+_0} 
\sup_{p\in M} -\frac{\Delta_V u(p)}{u(p)}.$$
We have the same formula with respect to $\Delta_V^*$.
\end{theorem}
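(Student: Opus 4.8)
The plan is to prove the two-sided inequality
$$
\sup_{u\in\mathcal{D}^+_0}\inf_{p\in M}\frac{-\Delta_V u(p)}{u(p)}\;\le\;\lambda^*_V\;\le\;\inf_{u\in\mathcal{D}^+_0}\sup_{p\in M}\frac{-\Delta_V u(p)}{u(p)},
$$
and then to observe that the principal eigenfunction $\omega_V\in\mathcal{D}^+_0$ realizes equality on both ends simultaneously, which forces the sup and the inf to coincide with $\lambda^*_V$ and with each other. This is exactly the strategy behind Barta's inequality in the case $V=0$ (\cite{Cha}, III.2, Lemma~1), and the only new ingredient here is that $\Delta_V$ is not self-adjoint; but the argument for Barta-type bounds only uses the maximum principle, which we have for $\Delta_V$ (Theorem~\ref{MaxPrinc}) and, crucially, also for $\Delta_V^*$ (Proposition~\ref{MaxPrinc*}). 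The same proof then applies verbatim to $\Delta_V^*$, giving its principal eigenvalue, which by Proposition~\ref{MaxPrinc*} equals $\lambda^*_V$ as well.

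First I would establish the two inequalities. Fix $u\in\mathcal{D}^+_0$ and set $c=\inf_{p\in M}\bigl(-\Delta_V u/u\bigr)$, so that $-\Delta_V u\ge c\,u$ on $M$, i.e. $\Delta_V u - c\,u\le 0$. On the other hand the principal eigenfunction $\omega_V$ satisfies $\Delta_V\omega_V+\lambda^*_V\omega_V=0$. The standard trick is to compare $u$ and $\omega_V$ by considering the quotient, or equivalently to apply the maximum principle to a suitable combination. Concretely, for $t>0$ consider $w_t = \omega_V - t\,u$; since $\omega_V/\partial\nu<0$ and $u/\partial\nu<0$ with both vanishing on $\partial M$ and positive inside, for small $t$ one has $w_t>0$ on $\bar M$-ish near the boundary, and one lets $t$ increase to the first value $t_0$ where $w_{t_0}$ touches $0$ at some interior point $p_0$ (or tangentially at the boundary). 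At that point $w_{t_0}$ attains a nonnegative maximum value $0$... wait — I would instead argue on $\omega_V - t u$ directly: $\Delta_V(\omega_V - t u) = -\lambda^*_V\omega_V - t\Delta_V u \ge -\lambda^*_V\omega_V + t c\,u$. If $c>\lambda^*_V$ we would get, for the critical $t_0$, that $\Delta_V w_{t_0}\ge (t_0 c - \lambda^*_V)\omega_V/\,$(something positive)$\,>0$ near $p_0$, contradicting that $w_{t_0}$ has an interior nonnegative maximum (Hopf, Theorem~\ref{MaxPrinc}(b)) unless $w_{t_0}\equiv 0$, which is impossible since $\omega_V$ and $u$ are generally not proportional; a boundary-point version (Theorem~\ref{MaxPrinc}(c)) handles the case the touching occurs on $\partial M$. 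Hence $c\le\lambda^*_V$, and taking the supremum over $u$ gives the left inequality. The right inequality is the mirror image: for $u\in\mathcal{D}^+_0$ with $C=\sup_{p\in M}(-\Delta_V u/u)$ one has $\Delta_V u + C u\ge 0$, and comparing with $\omega_V$ in the same way (now $t u - \omega_V$) yields $C\ge\lambda^*_V$; infimum over $u$ gives the claim. Finally, plugging $u=\omega_V$ gives $-\Delta_V\omega_V/\omega_V\equiv\lambda^*_V$, so both the inner inf and the inner sup equal $\lambda^*_V$ for this choice, pinning the outer sup and outer inf to $\lambda^*_V$.

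The main obstacle — and the only place care is needed — is the comparison step at the touching point, because $\Delta_V$ is non-self-adjoint and because the touching might happen on $\partial M$ rather than the interior. The clean way to dodge the boundary subtlety is to exploit that $\omega_V\in K^o$, i.e. $\partial\omega_V/\partial\nu<0$, together with $\partial u/\partial\nu<0$, $u>0$ on $M$: a boundary-regularity/Hopf-lemma argument (Theorem~\ref{MaxPrinc}(c)) shows that the ratio $u/\omega_V$ extends continuously and positively to all of $\bar M$, so $\inf_{\bar M}(u/\omega_V)$ and $\sup_{\bar M}(u/\omega_V)$ are attained and positive; one then runs the maximum principle on $\omega_V\cdot\sup(u/\omega_V) - u$ (respectively $u - \omega_V\cdot\inf(u/\omega_V)$), whose extremum is genuinely interior. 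For the statement about $\Delta_V^*$, the identical proof goes through once we invoke that the weak (hence Hopf) maximum principle holds for $\Delta_V^*$ — this is precisely the content of Proposition~\ref{MaxPrinc*} — and that $\Delta_V^*$ has its own principal eigenfunction $\omega_V^*\in K^o$ with eigenvalue $\lambda^*_V$.
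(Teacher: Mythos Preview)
Your approach is genuinely different from the paper's and has a gap at the boundary. The paper does not use the maximum principle or a sliding argument at all: it proves the Barta inequalities by integrating against the \emph{adjoint} principal eigenfunction $\omega_V^*$. For any $u\in\mathcal{D}_0^+$,
\[
\inf_M\Bigl(-\frac{\Delta_V u}{u}\Bigr)\int_M u\,\omega_V^*\,dM
\;\le\;\int_M(-\Delta_V u)\,\omega_V^*\,dM
\;=\;\int_M u\,(-\Delta_V^*\omega_V^*)\,dM
\;=\;\lambda_V^*\int_M u\,\omega_V^*\,dM,
\]
and dividing by $\int_M u\,\omega_V^*>0$ gives $\inf_M(-\Delta_V u/u)\le\lambda_V^*$; the other inequality is the same computation with $\sup$ in place of $\inf$. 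Then $u=\omega_V$ pins both sides to $\lambda_V^*$, as you noted. The non-self-adjointness is absorbed entirely by pairing with $\omega_V^*$ rather than $\omega_V$, and the argument uses only $u\in H^1_0\cap C_0(\bar M)$ through the integration by parts --- no pointwise comparison of $u$ with $\omega_V$ near $\partial M$ is ever made.

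Your sliding argument, by contrast, needs the ratio $u/\omega_V$ to attain its extrema at interior points, and you justify this by asserting that $u/\omega_V$ extends continuously and positively to $\bar M$ because ``$\partial u/\partial\nu<0$''. But nothing in the definition of $\mathcal{D}_0^+$ (equation~(\ref{D+})) guarantees this: membership requires only $u\in C^2(M)\cap C(\bar M)\cap H^1_0(M)$, so the normal derivative of $u$ need not exist, and $u/\omega_V$ can tend to $0$ or to $+\infty$ along $\partial M$. For instance, $u$ behaving like $d_{\partial M}^2$ near the boundary lies in $\mathcal{D}_0^+$ with $u/\omega_V\to 0$, while $u\sim d_{\partial M}\,|\log d_{\partial M}|^{1/4}$ is still in $H^1_0\cap C_0(\bar M)$ with $u/\omega_V\to\infty$. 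In the second case $\omega_V-tu$ is negative near $\partial M$ for every $t>0$, so your ``first touching'' never occurs at an interior point and the Hopf step cannot be invoked. Your argument would go through if the test class were the smaller cone $K^o$ (where $\partial u/\partial\nu<0$ is built in), and since $\omega_V\in K^o$ that restricted version still identifies $\lambda_V^*$; but as a proof over the full class $\mathcal{D}_0^+$ it is incomplete, and the paper's duality trick is precisely what circumvents this.
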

\noindent
\begin{proof} 
We take $\omega_V, \omega_V^*\in \mathcal{D}^+_0\cap C^{\infty}(\bar{M})$, 
principal eigenfunctions of $-\Delta_V$ and $-\Delta^*_V$, respectively.
Then, 
$$
\lambda_V^*=-\frac{\Delta_V \omega_V}{\omega_V}
=\inf_M \,-\frac{\Delta_V \omega_V}{\omega_V}\leq 
\sup_{u\in \mathcal{D}^+_0}
\inf_M \,-\frac{\Delta_V u}{u}.$$
On the other hand, for any $u\in \mathcal{D}^+_0$, 
\begin{eqnarray*}
\inf_M \LA{(}-\frac{\Delta_V u}{u}\LA{)}\int_M u \omega_V^*dM &\leq& 
 \int_M -\frac{\Delta_V u}{u}u \omega_V^*dM
=-\int_M (\Delta_V u)\omega_V^*dM\\
&=&-\int_M u(\Delta^*_V \omega_V^*)dM=
\lambda^*_V\int_M u\omega_V^* dM.
\end{eqnarray*}
Thus $\inf_M (-\frac{\Delta_V u}{u})\leq \lambda^*_V$. 
Similarly, we have 
$~\sup_M (-\frac{\Delta_V u}{u})\geq \lambda_V^*.$ \qed
\end{proof}

The above theorem is just   a Barta's type result (for $V=0$ see \cite{Barta}, or \cite{Cha}, III.1, Lemma 1).
\begin{corollary} [Generalized Barta's type inequality] 
\label{Barta12}
For any $u\in \mathcal{D}^+_0$,
\begin{eqnarray}
\label{Barta1}
 \inf_M \left(-\frac{\Delta_V u}{u}\right)\, \leq\,  \lambda_V^*\, \leq \,  
\sup_M \left(-\frac{\Delta_V u}{u}\right)\\ 
\label{Barta2}
 \inf_M \left(-\frac{\Delta^*_V u}{u}\right)\, \leq\,  \lambda_V^*\, \leq \,  
\sup_M \left(-\frac{\Delta^*_V u}{u}\right).
\end{eqnarray}
Equalities hold in (\ref{Barta1}) (in (\ref{Barta2}), respectively) if 
and only if $u=\omega_V$ ($u=\omega^*_{V}$, respectively).
\end{corollary}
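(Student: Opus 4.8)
The plan is to deduce everything directly from Theorem \ref{Barta11} and then isolate the equality cases. The two-sided estimates (\ref{Barta1}) are nothing but the one-sided inequalities already established inside the proof of Theorem \ref{Barta11}: for every $u\in\mathcal{D}^+_0$ one has $\inf_M(-\Delta_V u/u)\le\lambda_V^*$ and $\sup_M(-\Delta_V u/u)\ge\lambda_V^*$. Applying the same statement to $\Delta^*_V$, which is also covered by Theorem \ref{Barta11}, gives (\ref{Barta2}). So the only point that genuinely needs an argument is the characterization of when equality occurs.

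For equality in (\ref{Barta1}) I would revisit the chain used in the proof of Theorem \ref{Barta11} with the test weight $\omega_V^*\in\mathcal{D}^+_0$, the principal eigenfunction of $-\Delta^*_V$:
$$\inf_M\Bigl(-\frac{\Delta_V u}{u}\Bigr)\int_M u\,\omega_V^*\,dM\le -\int_M(\Delta_V u)\,\omega_V^*\,dM=-\int_M u\,(\Delta^*_V\omega_V^*)\,dM=\lambda_V^*\int_M u\,\omega_V^*\,dM.$$
If $\inf_M(-\Delta_V u/u)=\lambda_V^*$, then the first inequality is forced to be an equality, so $\int_M\bigl(-\Delta_V u/u-\lambda_V^*\bigr)\,u\,\omega_V^*\,dM=0$. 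The integrand is continuous and nonnegative on $M$, and $u>0$, $\omega_V^*>0$ there, whence $-\Delta_V u=\lambda_V^* u$ on all of $M$; since $u$ vanishes on $\partial M$, $u$ is a $\lambda_V^*$-eigenfunction of $-\Delta_V$, and simplicity of the principal eigenvalue (property (1)) forces $u$ to be a positive multiple of $\omega_V$. The supremum case is handled identically after reversing the middle step: $-\Delta_V u/u\le\lambda_V^*$ pointwise gives $\sup_M(-\Delta_V u/u)\int_M u\,\omega_V^*\,dM\ge\lambda_V^*\int_M u\,\omega_V^*\,dM$, and equality again yields $-\Delta_V u\equiv\lambda_V^* u$ and $u\in\mathbb{R}_{>0}\,\omega_V$. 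Interchanging $\Delta_V$ and $\Delta^*_V$, and using $\omega_V$ as the test weight, proves the equality claim in (\ref{Barta2}) with $u=\omega^*_V$. The converse is immediate: for $u=\omega_V$ the quotient $-\Delta_V u/u$ is the constant $\lambda_V^*$, so both extrema over $M$ equal $\lambda_V^*$; likewise for $u=\omega^*_V$ in (\ref{Barta2}).

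There is no serious obstacle, since the corollary is a direct consequence of Theorem \ref{Barta11}; the one step deserving care is the passage from the vanishing of $\int_M\phi\,dM$, with $\phi$ continuous and nonnegative on $M$, to the pointwise identity $-\Delta_V u/u\equiv\lambda_V^*$ on the open set $M$. This rests precisely on the strict positivity of $\omega_V^*$ on $M$, i.e. on $\omega_V^*$ vanishing only along $\partial M$, which is property (1) applied to $\Delta^*_V$, together with continuity of $\Delta_V u$ for $u\in C^2(M)$. Once that pointwise identity is in hand, simplicity of the principal eigenvalue closes the argument.
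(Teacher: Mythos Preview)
Your proposal is correct and follows exactly the approach implicit in the paper. The paper states Corollary \ref{Barta12} without proof, remarking only that Theorem \ref{Barta11} ``is just a Barta's type result''; your argument supplies the natural deduction, recycling the integral chain with weight $\omega_V^*$ from the proof of Theorem \ref{Barta11} and adding the equality analysis via nonnegativity of the integrand, positivity of $u\omega_V^*$ on $M$, and simplicity of $\lambda_V^*$.
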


The  min-max formula  we will describe next is due to Holland  
\cite{Ho}  on Euclidean domains.
 This formula was reformulated by Godoy, Gossez and Paczka 
\cite{GGP}, using weighted Sobolev spaces.
We give a sketch of the proof, valid at least for the case $\bar{M}$
a coordinate chart, which  formally follows the same steps as in 
\cite{GGP}. We also provide some formulas that we will need.

The weighted Sobolev spaces with weight the square of the distance function to 
$\partial M$,  $ d_{\partial M}(p)=\inf_{x\in \partial M}d(p,x)$, 
for $p\in \bar{M}$,  are defined by
\begin{eqnarray}
 L^2_{\partial}(M) &=&\left\{ u:M\to \mathbb{R} ~\mbox{measurable}:
\int_M d^2_{\partial M}u^2dM<+\infty\right\},~~ \label{sobolev1}\\
 H^1_{\partial}(M) &=& \left\{u\in H^1_{loc}(M): 
\int_M d^2_{\partial M}(u^2+|\nabla u|^2)
dM<+\infty \right\},~~ \label{sobolev2}
\end{eqnarray}
with the weighted Sobolev norms, 
$$\|u\|_{L^2_{\partial}}^2=\int_M d^2_{\partial M}u^2dM,\quad\quad
\|u\|_{H^1_{\partial}}^2= \int_M d^2_{\partial M}(u^2+|\nabla u|^2) 
dM,$$ 
respectively. If $\bar{M}$ is a smooth Euclidean domain, it is shown in 
\cite{GGP} that $H^1_{\partial}(M)$ is continuously embedded into $L^2(M)$ 
and compactly embedded into $L^2_{\partial}(M)$. In  Lemma \ref{omegaVboundary}(3), 
we show this is also true when $\bar{M}$ is a  smooth compact Riemannian domain 
that is  diffeomorphic to an Euclidean domain. This is  clear when 
$M$ is a geodesic ball $B_{r_0}(p_0)$  of $N$ with $r_0<\mathrm{inj}(p)$. 
The exponential map of $N$  defines a  diffeomorphism
$\exp_{p_0}:\bar{D}_{r_0}\to \bar{B}_{r_0}(p_0)$
from the  Euclidean closed $m$-ball $\bar{D}_{r_0}$ of radius $r_0$.
For each $t< r_0$, and  $\xi\in \mathbb{S}^{m-1}$ the distance functions 
to the boundaries are related  by 
$d_{\partial M}(\exp_{p_0}(t\xi))= r_0-t =d_{ \partial D_{r_0}}(t\xi)$.
We also show in the following lemma 
that the principal eigenfunction $\omega_V$ lies in $ \mathcal{D}_{\partial}$. 
Let  $\mathcal{O}$ be  a small tubular neighbourhood of $\partial M$ in $N$  
such that normal minimizing geodesics starting from $\partial M$ are unique.

\begin{lemma}\label{omegaVboundary}
Assume $\partial M$ is a smooth hypersurface of $N$ and $\nu$ is its  
unit outer normal with respect to $\bar{M}$. We have the following:\\[1mm]
 $~(1)$ The distance 
function, $d_{\partial M}:\bar{M}\to [0, +\infty)$, lies in  
$C^{\infty}(\bar{M}\cap \mathcal{O})\cap C_0(\bar{M})$ and satisfies 
$\frac{ d_{\partial M}}{\partial \nu}(p)=-1$, for all $ p\in \partial M$;\\[1mm]
$~(2)$ $\omega_V\in \mathcal{D}_{\partial}\cap \mathcal{D}_0^+\cap H^1_0(M)$; \\[1mm]
$~(3)$ If  $\Phi: \bar{D} \to \bar{M}$ is a diffeormorphism 
from an Euclidean domain $\bar{D}$ onto $\bar{M}$, then we can find
some constants $c_i>0$ such that,
$$ c_1 d_{euc} (x, \partial D)\leq d_{\partial M}(\Phi(x))\leq c_2 
d_{euc} (x, \partial D),$$
 holds for all $\Phi(x)\in \mathcal{O}\cap \bar{M}$. 
Furthermore, $H^1_{\partial}(M)$ is continuously embedded into $L^2(M)$ and 
compactly embedded into $L^2_{\partial}(M)$. Moreover, $\mathcal{D}_{\partial}\subset
H^1_0(M)$. 
\end{lemma}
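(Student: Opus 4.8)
\textbf{Proof proposal for Lemma \ref{omegaVboundary}.}

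The plan is to treat the three items in order, building each on the previous one. For $(1)$, I would recall the standard tubular-neighbourhood theorem: since $\partial M$ is a smooth compact embedded hypersurface of $N$, the normal exponential map $\partial M \times (-\delta,\delta)\to N$, $(x,s)\mapsto \exp_x(s\,\nu(x))$, is a diffeomorphism onto a neighbourhood $\mathcal{O}$ for $\delta$ small; on the inside $\bar M\cap\mathcal{O}$ one has $d_{\partial M}(\exp_x(-s\nu(x)))=s$, so $d_{\partial M}$ is the composition of a smooth map with a smooth chart and hence lies in $C^\infty(\bar M\cap\mathcal{O})$. Continuity on all of $\bar M$ with vanishing on $\partial M$ is immediate from the definition as an infimum of continuous functions and the triangle inequality. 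The normal-derivative identity $\partial d_{\partial M}/\partial\nu=-1$ on $\partial M$ follows by differentiating $s\mapsto d_{\partial M}(\exp_x(-s\nu(x)))=s$ at $s=0$ and noting $\nu$ points outward.

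For $(2)$, recall from Section 3 that $\omega_V\in C^{1,\alpha}_0(\bar M)$, is smooth and positive on $M$, vanishes on $\partial M$, and by property $(4)$ satisfies $\partial\omega_V/\partial\nu<0$ on $\partial M$. So $\omega_V\in\mathcal{D}_0^+$ and, being in $C^1(\bar M)\cap C_0(\bar M)$, also in $H^1_0(M)$ by the result of \cite{Au} cited in Section 3. It remains to show $\omega_V(p)/d_{\partial M}(p)\in[C_1,C_2]$ for positive constants. Near $\partial M$ I would Taylor-expand both $\omega_V$ and $d_{\partial M}$ along the inward normal geodesic from each boundary point, using $(1)$ for $d_{\partial M}$; compactness of $\partial M$ together with $\partial\omega_V/\partial\nu$ being continuous and strictly negative (hence bounded away from $0$ and $\infty$) gives the two-sided bound on a collar $\mathcal{O}\cap\bar M$. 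Away from the collar, $\omega_V$ is bounded below by a positive constant and $d_{\partial M}$ is bounded above, while $d_{\partial M}$ is also bounded below there (distance to $\partial M$ is at least the collar width) and $\omega_V$ is bounded above by continuity; so the ratio is pinched on the complement too. Combining gives $\omega_V\in\mathcal{D}_\partial$.

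For $(3)$, the comparison of distance functions is the main point. Under the diffeomorphism $\Phi$, the Euclidean metric pulls back (via $\Phi^{-1}$) to a metric on $\bar M$ that is uniformly comparable to $g$ by compactness, so $d_{\partial M}$ and $x\mapsto d_{euc}(x,\partial D)$ are comparable as functions on the collar — quantitatively, any two distance-to-boundary functions for uniformly equivalent metrics on a compact manifold with boundary differ by bounded multiplicative constants, which one checks by estimating lengths of minimizing curves in both metrics. This yields the constants $c_1,c_2$. The embedding statements then follow by transporting the problem to $\bar D$: the weighted norms $\|\cdot\|_{H^1_\partial}$, $\|\cdot\|_{L^2_\partial}$ computed with $g$ and $d_{\partial M}$ are, by the distance comparison and the metric comparison (which also controls the volume form and the gradient), equivalent to the corresponding Euclidean weighted norms on $\bar D$ with weight $d_{euc}(\cdot,\partial D)^2$; then invoke the Euclidean embedding results of \cite{GGP} — continuous embedding $H^1_\partial(D)\hookrightarrow L^2(D)$ and compact embedding $H^1_\partial(D)\hookrightarrow L^2_\partial(D)$ — and pull back. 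Finally, $\mathcal{D}_\partial\subset H^1_0(M)$: an element $u\in\mathcal{D}_\partial$ satisfies $|u|\le C_2\,d_{\partial M}$, so $u\in L^2(M)$; since $u\in H^1(M)$ and $u$ has a continuous representative vanishing on $\partial M$ (because $u/d_{\partial M}$ is bounded and $d_{\partial M}\to 0$ at $\partial M$), we have $u\in H^1(M)\cap C_0(\bar M)$, and the theorem of \cite{SZ} quoted in Section 3 (valid since $M$ is diffeomorphic to a Euclidean domain with $C^1$ boundary) gives $u\in H^1_0(M)$.

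The step I expect to be the main obstacle is the two-sided bound $\omega_V/d_{\partial M}\in[C_1,C_2]$ near $\partial M$ in $(2)$: the lower bound in particular requires knowing $\partial\omega_V/\partial\nu$ is uniformly negative, which is exactly the Hopf-lemma content of property $(4)$, but turning the pointwise boundary derivative into a uniform collar estimate needs the $C^1$ regularity of $\omega_V$ up to $\partial M$ and a careful uniform Taylor estimate using compactness — and one must simultaneously control $d_{\partial M}$, which is where $(1)$ is used. The metric-comparison arguments in $(3)$ are routine once $(1)$–$(2)$ are in place.
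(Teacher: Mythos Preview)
Your proposal is correct and matches the paper's proof in structure and substance: tubular neighbourhood/Fermi coordinates for (1), the Hopf-lemma property $\omega_V\in K^o$ together with the limit $\omega_V/d_{\partial M}\to -\partial\omega_V/\partial\nu$ for (2), and transport to the Euclidean domain via $\Phi$ to invoke \cite{GGP} and \cite{SZ} in (3). The one small variation is the distance comparison in (3): the paper sets up Fermi coordinates simultaneously on collars of $\partial D$ and $\partial M$, whereas you argue more directly from the uniform equivalence of the pullback Euclidean metric and $g$ on the compact $\bar M$ to compare lengths of curves and hence distances to the boundary---your route is arguably cleaner.
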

\begin{proof} (1) and (2).  Locally, $\partial M$ is the hyperplane
of $\mathbb{R}^m$, $\{0\}\times \mathbb{R}^{m-1}$, $\bar{M}$ is the half space 
$\{x_1,x_2,\ldots, x_m\}$  with $x_1\leq 0$, and  
$d_{\partial M}(x_1,\ldots, x_m)=-x_1$.  Hence, locally, $d_{\partial M}$ has smooth 
extensions on a neighbourhood of each point  $p_1\in \partial M$ in $N$. Therefore,
$d_{\partial M}\in C^{\infty}(\bar{M}\cap \mathcal{O})$.
Now, normal geodesics starting from $\partial M$ are of the form 
$\gamma(t)=\exp_{p_1}(\mp t\nu(p_1))$, for $t\in [0,\epsilon)$, 
with $p_1\in \partial M$.  The $-$ sign corresponds to a geodesic lying in
$\bar{M}\cap \mathcal{O}$. We only consider these geodesics.  
Then we have,  $\nabla d_{\partial M}(\gamma(t))=\gamma' (t)$.
This can be shown using Fermi coordinates on $\mathcal{O}$
(\cite{Gray}, Lemma 2.7, Lemma 2.8 (2.25)).
Therefore, $\nabla d_{\partial M}(p_1)=\lim_{t\to 0^+}\nabla 
d_{\partial M}(\gamma(t))=-\nu(p_1)$. It follows that
$\frac{ d_{\partial M}}{\partial \nu}(p_1)=-1$.
Consequentely, $\lim_{p\to p_1}{\omega_V(p)}/{d_{\partial M}(p)}=
-({\partial \omega_V}/{\partial \nu})(p_1)$.  
This equality and the fact that $\omega_V\in K^o$  
implies  $\omega_V\in  \mathcal{D}_{\partial} $.\\[2mm]
(3). Coordinates charts on $\partial D$ are transported by $\Phi$ into 
coordinate charts on $\partial M$. Hence, we may build simultaneously a Farmi 
coordinate system on a tubular neighbourhood $\mathcal{O}_{euc}$ of $\partial D$
and another on $\mathcal{O}$, such that, for each $x\in \partial D$ and $t_1\in [0,\epsilon)$,
 $d(-t_1\nu_{euc}(x),\partial D)=t_1=d_{\partial M}(\exp_{\Phi(x)}(-t_1\nu(\Phi(x)))$,
(\cite{Gray}, Chapter 2, Section 2.1., (2.4)). Now, the first statement follows naturally.  Obviously, the metric $g$ is equivalent to the one induced by the 
Euclidean one via $\Phi$. These two facts imply that the weighted Sobolev norms defined 
for functions $u$ on   $M$ and  for functions $\tilde{u}=u\circ \Phi$  on  $D$ are equivalent. This implies the second last statement of (3) is true for $M$, 
knowing it is true for $D$ (\cite{GGP}, Lemma 4.1). 
Finally, we have $\mathcal{D}_{\partial}\subset
H^1(M)\cap C_0(\bar{M})$. In case $\bar{M}$ is a global chart 
the latter set is contained in $H^1_0(M)$, \cite{SZ}.  \qed \end{proof}
 
For each $u\in  \mathcal{D}_{\partial}$ we consider the continuous functional 
$Q_u:H^1_{\partial}(M)\to \mathbb{R}$, given by
$$ Q_u(v):=\int_M u^2( |\nabla v|^2-g({V},\nabla v)) dM.$$
Now we may present the Holland-Godoy-Gossez-Paczka formula
(\cite{Ho, GGP}).

\begin{theorem}[Min-max integral formula] \label{integralminmax}
If $\bar{M}$ is a regular compact domain of a coordinate chart, then 
\begin{equation}\label{integralminmaxformula}
\lambda_V^*=\inf_{\{u\in \mathcal{D}_{\partial}:~\|u\|_{L^2}=1\}}\La{(} \mathcal{L}(u,u)-
\inf_{v\in H^1_{\partial}(M)}{Q}_u(v)\La{)}.
\end{equation}
Equality is achieved at $u_V=\omega_V\sqrt{G_V}$ (normalized).
Here, $G_V\in H^1_{\partial}(M)$ is the unique solution, up to a multiplicative 
constant, of the integral equation 
\begin{equation}\label{GV}
\int_M
g(\nabla G+ GV, \nabla \phi)\omega_V^2dM=0, \quad \forall \phi\in H^1_{\partial}(M),
\end{equation}
satisfying $0<c_1\leq G_V\leq c_2$, for some positive constants
$c_i$. In particular $u_V\in \mathcal{D}_{\partial}\cap H^1_0(M)$
and
$$\lambda^*_V= \inf_{\{u\in \mathcal{D}_{\partial}\cap H^1_0(M): ~\|u\|_{L^2}=1\}}\La{(}
 \mathcal{L}(u,u) -\inf_{v\in H^1_{\partial}(M)  } Q_u(v)\La{)}. $$
\end{theorem}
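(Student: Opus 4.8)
The plan is to follow the Holland--Godoy--Gossez--Paczka strategy, transporting the problem to the Euclidean chart via the diffeomorphism $\Phi$ of Lemma \ref{omegaVboundary}(3), so that the weighted Sobolev embeddings and the analysis of the degenerate equation carry over verbatim from \cite{GGP}. First I would establish existence and properties of $G_V$: consider the symmetric, coercive bilinear form $a(G,\phi)=\int_M g(\nabla G,\nabla\phi)\,\omega_V^2\,dM$ on the weighted space $H^1_\partial(M)$, using the compact embedding $H^1_\partial(M)\hookrightarrow L^2_\partial(M)$ from Lemma \ref{omegaVboundary}(3) together with the Rayleigh principle for the weight $\omega_V^2$ (noting $\omega_V\asymp d_{\partial M}$ on $\mathcal O$, so $\omega_V^2$ is comparable to the $GGP$ weight). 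Krein--Rutman theory applied to the associated compact, positive, irreducible operator yields a positive eigenfunction, and the first eigenvalue is $0$ because $G\equiv 1$ solves the homogeneous equation in the gradient case; the drift term $GV$ is a lower-order perturbation handled exactly as in \cite{GGP}. This gives $G_V\in H^1_\partial(M)$ solving \eqref{GV}, unique up to scaling, and the two-sided bound $0<c_1\le G_V\le c_2$ follows from elliptic Harnack/maximum-principle estimates for the degenerate divergence-form operator $\mathrm{div}^0(\omega_V^2(\nabla G+GV))$, again transported from the Euclidean case.

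Next I would verify the variational identity. For fixed $u\in\mathcal D_\partial$, the functional $Q_u$ is (after completing the square) minimized over $v\in H^1_\partial(M)$ by the solution of $\mathrm{div}^0(u^2(\nabla v-V))=0$, i.e.\ by $v=\tfrac12\log G_u$ where $G_u$ solves the analogue of \eqref{GV} with weight $u^2$; the minimum value is then computed by an integration-by-parts. Substituting the candidate $u_V=\omega_V\sqrt{G_V}$ (normalized in $L^2$), one checks that the corresponding optimal $v$ is $\tfrac12\log G_V$ and that a direct computation --- expanding $\nabla u_V=\sqrt{G_V}\,\nabla\omega_V+\tfrac{\omega_V}{2\sqrt{G_V}}\nabla G_V$, plugging into $\mathcal L(u_V,u_V)-Q_{u_V}(\tfrac12\log G_V)$, using $\Delta_V\omega_V=-\lambda_V^*\omega_V$ and the equation \eqref{GV} for $G_V$ --- collapses to exactly $\lambda_V^*$. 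This shows the infimum is $\le\lambda_V^*$.

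For the reverse inequality, given any $u\in\mathcal D_\partial$ with $\|u\|_{L^2}=1$, I would take $v_u=\tfrac12\log G_u\in H^1_\partial(M)$ (legitimate since $c_1\le G_u\le c_2$ gives boundedness and the weighted gradient bound), so that $\inf_v Q_u(v)\le Q_u(v_u)$, and then show $\mathcal L(u,u)-Q_u(v_u)\ge\lambda_V^*$ by relating the left side to the Rayleigh quotient of $-\Delta_V$ against the test function $u/\sqrt{G_u}\cdot\sqrt{G_u}$; more precisely, writing $w=u\,e^{-v_u}=u/\sqrt{G_u}$, an integration-by-parts identity converts $\mathcal L(u,u)-Q_u(v_u)$ into a Rayleigh-type expression $\int_M(-\Delta_V w)\,w\,G_u\,dM\big/\!\int_M w^2 G_u\,dM$ modulo boundary terms that vanish because $u\in H^1_0(M)$ (this last membership being supplied, when $\bar M$ is a global chart, by \cite{SZ} via Lemma \ref{omegaVboundary}(3)). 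Since $-\Delta_V$ with the measure $G_u\,dM$ is, by the computation in \eqref{GV}, unitarily equivalent to a self-adjoint operator whose bottom eigenvalue is $\lambda_V^*$ (this is the point where the whole construction is arranged to work), the quotient is $\ge\lambda_V^*$, completing the proof; the final displayed formula restricting to $\mathcal D_\partial\cap H^1_0(M)$ then follows since $u_V$ lies in that set and the larger infimum is attained there.

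The main obstacle is the justification of the boundary-term cancellation and the legitimacy of all integrations by parts in the degenerate weighted setting: the weights $\omega_V^2$ and $d_{\partial M}^2$ vanish on $\partial M$, functions in $H^1_\partial(M)$ need not have traces, and $G_u$ is only a weak solution, so one must either approximate by smooth compactly supported functions and control the weighted error terms, or invoke the Euclidean density and regularity results of \cite{GGP} through the bi-Lipschitz equivalence of Lemma \ref{omegaVboundary}(3). Keeping careful track of which estimates genuinely need $\bar M$ to be a single coordinate chart --- namely the Sobolev embedding and the $H^1(M)\cap C_0(\bar M)\subset H^1_0(M)$ inclusion --- is what restricts the clean statement to that case.
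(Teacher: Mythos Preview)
Your treatment of the existence results (A) and (B) --- the minimizer $w_u$ of $Q_u$ and the positive bounded solution $G_u$ of the degenerate equation --- is in line with the paper, which likewise transports the Godoy--Gossez--Paczka machinery through Lemma~\ref{omegaVboundary}(3). Your verification that the value $\lambda_V^*$ is attained at $u_V=\omega_V\sqrt{G_V}$ by direct substitution is also reasonable.

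The genuine gap is in your argument for the lower bound $\mathcal{L}(u,u)-\inf_v Q_u(v)\ge \lambda_V^*$ for \emph{arbitrary} $u\in\mathcal D_\partial$. Two problems arise. First, the identification $v_u=\tfrac12\log G_u$ (or, with the correct sign, $-\tfrac12\log G_u$) as the minimizer of $Q_u$ is false for general $u$: the Euler--Lagrange equation for $Q_u$ is $\mathrm{div}^0\bigl(u^2(2\nabla v-V)\bigr)=0$, whereas $G_u$ solves $\mathrm{div}^0\bigl(u^2(\nabla G+GV)\bigr)=0$, and setting $2\nabla v=-\nabla\log G_u$ gives $u^2(2\nabla v-V)=-\tfrac{u^2}{G_u}(\nabla G_u+G_uV)$, whose divergence picks up the extra term $\tfrac{u^2}{G_u^2}g(\nabla G_u,\nabla G_u+G_uV)\neq 0$. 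The coincidence $w_{u_V}=-\tfrac12\log G_V$ holds only because $u_V^2=\omega_V^2 G_V$ absorbs the factor $1/G_V$. Second, and more seriously, the claim that $-\Delta_V$ becomes, under the weight $G_u\,dM$, unitarily equivalent to a self-adjoint operator with bottom eigenvalue $\lambda_V^*$ contradicts Lemma~\ref{nonself}: $\Delta_V$ is $L^2_{e^{-f}}$-self-adjoint only when $V=\nabla f$, so no Rayleigh-quotient bound is available by that route.

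The paper obtains the lower bound by an entirely different device: a pointwise ``completion of the square'' inequality,
\[
-|X|^2-g(V,X)\ \le\ g(X,Z)+\tfrac14|V+Z|^2,
\]
applied with $X=-\nabla\log\omega_V$ and $Z=-V+2\nabla(\log u+w_u)$, and then integrated against $u^2\,dM$. The left-hand side, after the divergence identity $\int_M\mathrm{div}^0(-u^2\nabla\log\omega_V)\,dM=0$ (justified via the boundary behaviour of $\omega_V$ and $u\in\mathcal D_\partial$), produces exactly $\lambda_V^*\int_M u^2\,dM$ from the eigenfunction equation for $\omega_V$; the right-hand side rearranges to $\mathcal L(u,u)-Q_u(w_u)$. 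Thus the principal eigenfunction $\omega_V$ itself, not any $G_u$, is what drives the lower bound, and the argument is algebraic rather than spectral.
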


The proof is based on two existence results, (A) and (B), 
and makes use of the  \em completing of a square \em algebraic 
inequality (C), described below:  

(A) ~Given $u\in  \mathcal{D}_{\partial}$, there exists $w_u\in H^1_{\partial}(M)$ such that
$Q_u(w_u)=\inf_{v \in H^1_{\partial}(M)}Q_u(v)$. It is unique on $ H^1_{\partial}(M)$ up to an 
additive constant (a.e.). Computing the Euler Lagrange equation we see that, 
\begin{equation}\label{existence-wu}
Q_u(w_u)=-\int_M|\nabla w_u|^2u^2 dM= -\frac{1}{2}\int_M
g(V,\nabla w_u)u^2dM\leq 0.
\end{equation}
In case $V=0$ we must have  $w_u=0$ up to a constant (a.e.), and the min-max 
formula is the usual Rayleigh formula for the first eigenvalue 
of the $0$-Laplacian.

(B)~Given $u\in \mathcal{D}_{\partial}$,  there exists $G_u\in H^1_{\partial}(M)$ such that
\begin{equation}\label{existence-Gu}
B(G_u,\phi):=\int_M g(\nabla G_u + G_u V, \nabla \phi)u^2 dM=0, 
 \quad\forall \phi\in H^1_{\partial}(M).
\end{equation}
It is unique up to a multiplicative positive constant, 
satisfying $0<c_1\leq G\leq c_2$ for some positive constants $c_i$.

(C) For all $X,Z\in T_pM$, 
\begin{equation} \label{square}
-|X|^2-g(V,X) \leq g(X,Z)+ \frac{1}{4}|V+Z|^2.
\end{equation}
Equality holds if and only if $Z=-2X-V$.

The proof of (A) on the existence of a unique minimum $w_u$ relies on the  
strict convexity of the integrand $F(P,v,p)=u^2(|P|^2-g(V(p),P))$ in the 
variable $P$, and on the coerciveness of $Q_u $ on the subspace
$~H^1_{\partial, B}(M)=\{u\in H^1_{\partial}(M):\int_B u=0\}$. Here,  $B$ is some 
fixed  small ball in the interior of $M$.  Coerciveness of $Q_u$ can be 
shown  using the compactness of the embedding $H^1_{\partial}(M)$ into 
$L^2_{\partial}(M)$, 
given in Lemma \ref{omegaVboundary}.  
A minimum $w_u$ is a critical point  of $Q_u$.  It is a (weak) solution 
of the degenerate elliptic second order differential equation 
$\mathrm{div}^0(u^2(2\nabla w-V))=0$,  for $w\in H^1_{\partial}(M)$. 
A critical point $w_u$ satisfies  $Q_u(w_u)=-\int_Mu^2|\nabla w_u|^2\leq 0$. 
The proof of the second existence result (B) is  more complex because one 
has to find a positive solution. A solution is a weak solution of the 
degenerate elliptic second order differential equation 
$\mathrm{div}^0(u^2(\nabla G+VG))=0$, that is, it satisfies
$B(G,\phi)=0$ for all  $\phi\in H^1_{\partial}(M)$. In \cite{Ho},
Holland proved the existence of $G\geq 0$ by finding 
an ergodic measure with probability density $G$. The alternative proof in 
\cite{GGP} consists of taking $\epsilon $ sufficiently large such that 
$B_{\epsilon}(G,\phi)=B(G,\phi) + \epsilon \int_M G\phi u^2dM$ 
is coercive for the $H^1_{\partial}$-norm. Its inverse operator 
$T_{d,\epsilon}:L^2_{\partial}(M)\to L^2_{\partial}(M)$ can be shown to be compact, 
and satisfies the following property: 
if $0\neq f\geq 0$, then $\mathrm{ess} \inf_B T_{d,\epsilon}f >0$, 
for any open domain $B$ with compact closure in the interior of $M$. 
This implies that the compact operator $T_{d,\epsilon}$ is a  positive 
and irreducible operator on $L^2$ spaces, in the sense of Schwartz
\cite{Schw}, which forces the spectral radius of $T_{d,\epsilon}$ to be 
positive. This is a sufficient condition for the existence of a principal 
eigenvalue, and a principal eigenfunction $G_u\geq 0$. Performing a Moser 
type iteration technique from $B(G_u,\phi)=0$, leads to the conclusion that $G_u$ is 
uniformly bounded for all weighted $L^p_{\partial}$-norms. Consequently, 
$G_u$ is bounded from above by a positive constant $c_2$. 
Coerciveness of a related modified operator implies $G_u\geq c_1>0$.  
The divergence of the vector field $U=-u^2\nabla\log \omega_V$ is given by
$$- \mathrm{div}^0(U)
= g\LA{(}\frac{2u}{\omega_V}\nabla u -\frac{u^2}{\omega_V^2}
\nabla \omega_V, \nabla \omega_V\LA{)} 
-u^2\lambda_V^*+ \frac{u^2}{\omega_V}g(V,\nabla \omega_V).$$
Integrability of  $\mathrm{div}^0(U)$  on ${M}$ follows from  
 the properties of $\omega_V/d_{ \partial M}$ and  
$u/d_{\partial M}$ near $\partial M$.
On the other hand, $U$ continuously  extends to zero  on $\partial M$.
Considering for each $\epsilon>0$, 
$M_{\epsilon}=\{p\in M: d_{ \partial M}(p)\geq \epsilon\}$,
 and $\nu_{\epsilon}$ the outward unit of its boundary, we have
$$\int_M \mathrm{div}^0(U)dM =\lim_{\epsilon\to 0}\int_{M_{\epsilon}} 
\mathrm{div}^0(U)dM=\lim_{\epsilon\to 0}
\int_{\partial M_{\epsilon}} g(U,\nu_{\epsilon})dM=0.$$
This fact, and  following \cite{GGP},  taking the $u^2dM$ integration 
of the algebraic inequality (\ref{square}) in (C) with, 
$X=-\nabla \log \omega_V$, 
$Z=-V+2\nabla (\log (u)+ w_u)$,
gives the inequality
$\lambda^*_V\int_M u^2dM \leq \mathcal{L}(u,u) -Q_u(w_u)$.  
Equality holds at
\begin{equation}\label{achieved-uV}u=u_V:=\omega_V\sqrt{G_V},
\end{equation}
where $G_V$ is the solution $G_u$ given in (B) with respect to 
$u=\omega_V$, with 
\begin{equation}\label{achieved-wuV}
w_{u_V}=-(\log G_V)/2.
\end{equation}
This solution  $u_V$ lies in $H^1_0(M)$. To see this we first recall 
that $\omega_V\in \mathcal{D}_{\partial}$. Now, $G_V$ is a weak solution 
of an elliptic operator of second order with smooth coefficients on any  
subdomain $\Omega$ with  smooth compact closure in the interior of $M$. 
Moreover, $G_V\in H^1(\Omega)$ and it is bounded. Hence, 
$G_V\in C^{\infty}(\Omega)$ (cf.\ \cite{Au}, Theorem 3.55). 
In particular, $G_V\in C(M)$. From 
$$|\nabla (\omega_V\sqrt{G_V})|^2\leq 2|\nabla \omega_V|^2|G_V|
+\frac{1}{2}\frac{|\omega_V|^2}{d^2_{\partial M}}\frac{|\nabla G_V|^2}{G^2_V}
d^2_{\partial M},$$
we conclude that $u_V\in H^1(M)$, and so $u_V\in \mathcal{D}_{\partial}\cap C_0(\bar{M})$.
Consequentely, $u_V\in H^1_0(M)$ (cf. \cite{SZ}).\\

In the particular case of the Bakry-\'{E}mery Laplacian, straightforward 
computations prove the following.
\begin{proposition} \label{Proposition5.4} If $~V=\nabla f$, then for any
$u\in \mathcal{D}_{\partial}$, $w_u=f/2$ and $G_u=e^{-f}$. Moreover 
$$\lambda^*_V=\inf_{\tilde{u}\in \mathcal{D}_{\partial}}
\frac{\mathcal{L}(\tilde{u}, \tilde{u})-\inf_v Q_{\tilde{u}}(v)}
{\int_M \tilde{u}^2dM} = \inf_{\tilde{u}\in \mathcal{D}_{\partial}}
\frac{\int_M|\nabla \tilde{u} + \tilde{u}\frac{1}{2}\nabla f|^2dM}
{\int_M \tilde{u}^2dM}.$$
The infimum is achieved at 
$\tilde{u}=\omega_f e^{-f/2} \in \mathcal{D}_{\partial}\cap H^1_0(M)$.
Writing $u=\tilde{u}e^{f/2}$, and recalling that 
$C^{2}_c(M)\subset \mathcal{D}_{\partial}$ is dense in $H^1_0(M)$, we obtain the 
Rayleigh variational characterization for the first eigenvalue of the 
$f$-Laplacian given in (\ref{Rayleigh f}). That is, $\lambda^*_V=\lambda_f$,
and  $u=\omega_f\in \mathcal{D}_{\partial}\cap H^1_0(M)$ is
the $\lambda_f$-eigenfunction for $-\Delta_f$.
\end{proposition}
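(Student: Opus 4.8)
The plan is first to identify the two auxiliary functions $w_u$ and $G_u$ explicitly, and then to insert them into the min-max formula of Theorem \ref{integralminmax}. For $w_u$: when $V=\nabla f$, the Euler--Lagrange equation $\mathrm{div}^0(u^2(2\nabla w-V))=0$ from (A) is solved by $w=f/2$, since then $2\nabla w-V\equiv 0$; moreover $f/2\in H^1_\partial(M)$ because $f\in C^\infty(\bar M)$ and $d_{\partial M}$ is bounded on $\bar M$. By the uniqueness up to an additive constant in (A), $w_u=f/2$, and hence, by (\ref{existence-wu}) or a direct computation, $\inf_{v}Q_u(v)=Q_u(f/2)=\int_M u^2\big(\tfrac14|\nabla f|^2-\tfrac12|\nabla f|^2\big)\,dM=-\tfrac14\int_M u^2|\nabla f|^2\,dM$. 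For $G_u$: with $V=\nabla f$ one has $\nabla(e^{-f})+e^{-f}V\equiv 0$, so $G=e^{-f}$ solves $B(G,\phi)=0$ for all $\phi\in H^1_\partial(M)$; since $e^{-f}$ is bounded between positive constants on the compact set $\bar M$, the uniqueness up to a positive multiplicative constant in (B) gives $G_u=e^{-f}$.

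Next I would use the degree-two homogeneity of $u\mapsto \mathcal{L}(u,u)-\inf_v Q_u(v)$, which follows from $\mathcal{L}(cu,cu)=c^2\mathcal{L}(u,u)$ and $\inf_v Q_{cu}(v)=c^2\inf_v Q_u(v)$, to rewrite (\ref{integralminmaxformula}) as $\lambda^*_V=\inf_{\tilde u\in\mathcal{D}_\partial}\big(\mathcal{L}(\tilde u,\tilde u)-\inf_v Q_{\tilde u}(v)\big)/\|\tilde u\|_{L^2}^2$. Substituting $\inf_v Q_{\tilde u}(v)=-\tfrac14\int_M\tilde u^2|\nabla f|^2\,dM$ and recalling $V=\nabla f$, the numerator equals $\int_M\big(|\nabla\tilde u|^2+\tilde u\,g(\nabla f,\nabla\tilde u)+\tfrac14\tilde u^2|\nabla f|^2\big)\,dM=\int_M\big|\nabla\tilde u+\tfrac12\tilde u\nabla f\big|^2\,dM$, which is the second claimed expression. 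Now set $u=\tilde u\,e^{f/2}$; then $\nabla\tilde u+\tfrac12\tilde u\nabla f=e^{-f/2}\nabla u$ and $\tilde u^2=u^2e^{-f}$, so the quotient equals $\int_M e^{-f}|\nabla u|^2\,dM\big/\int_M e^{-f}u^2\,dM$. Since multiplication by the smooth, everywhere-positive, bounded function $e^{f/2}$ maps $\mathcal{D}_\partial$ bijectively onto itself and, as $\mathcal{D}_\partial\subset H^1_0(M)$, into $H^1_0(M)$, and since $C^2_c(M)\subset\mathcal{D}_\partial$ is dense in $H^1_0(M)$, taking the infimum recovers precisely the Rayleigh principle (\ref{Rayleigh f}), so $\lambda^*_V=\lambda_f$. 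Finally, by Theorem \ref{integralminmax} the infimum is attained at $u_V=\omega_V\sqrt{G_V}=\omega_f e^{-f/2}$ (using $\omega_V=\omega_f$, since $\Delta_V=\Delta_f$ as operators, and $G_V=e^{-f}$), which under $u=\tilde u e^{f/2}$ corresponds to $u=\omega_f$, the $\lambda_f$-eigenfunction of $-\Delta_f$; and $\omega_f e^{-f/2}\in\mathcal{D}_\partial\cap H^1_0(M)$ by Lemma \ref{omegaVboundary}(2) together with the same multiplication argument.

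Everything here is elementary algebra once the framework of Theorem \ref{integralminmax} is in place, so there is no serious obstacle; the only points I would write out carefully are (i) that $f/2$ and $e^{-f}$ genuinely lie in $H^1_\partial(M)$ with the two-sided positivity needed to invoke the \emph{uniqueness} clauses of (A) and (B), not merely that they solve the respective equations, and (ii) that the substitution $u=\tilde u\,e^{f/2}$ is a bijection of $\mathcal{D}_\partial$ and of $H^1_0(M)$, so that passing to the infimum in the transformed quotient reproduces the standard Rayleigh infimum over all of $H^1_0(M)$ rather than over a proper subset.
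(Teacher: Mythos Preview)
Your argument is correct and is exactly the ``straightforward computation'' the paper alludes to; the paper gives no detailed proof of this proposition, and your write-up fills in precisely the steps one would expect (identifying $w_u=f/2$ and $G_u=e^{-f}$ via the uniqueness clauses of (A) and (B), then completing the square and performing the change of variable $u=\tilde u\,e^{f/2}$). One minor remark: the inclusion $C^2_c(M)\subset\mathcal{D}_\partial$ quoted from the statement is literally false under the definition (\ref{Dd}), since compactly supported functions vanish near $\partial M$; but this does not affect the conclusion, because the infimum is already attained at $\omega_f e^{-f/2}\in\mathcal{D}_\partial$, and $\mathcal{D}_\partial\subset H^1_0(M)$ gives the reverse inequality directly.
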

\begin{corollary} If $V=0$, the min-max formula in Theorem \ref{integralminmax} 
reduces to the Rayleigh variational characterization of the 
first eigenvalue of $-\Delta_0$.
\end{corollary}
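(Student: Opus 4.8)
The plan is to read off the statement as the $f\equiv 0$ specialization of Proposition \ref{Proposition5.4}. First I would note that $V=0$ is precisely $V=\nabla f$ with $f=0$, so Proposition \ref{Proposition5.4} applies and gives, for every $u\in\mathcal{D}_\partial$, $w_u=0$ and $G_u=e^{0}=1$. One can also argue this directly without invoking the proposition: from (\ref{existence-wu}) the minimizer satisfies $Q_u(w_u)=-\int_M|\nabla w_u|^2 u^2\,dM\le 0$, while $Q_u(v)=\int_M u^2|\nabla v|^2\,dM\ge 0$ for every $v\in H^1_\partial(M)$, and choosing $v$ constant shows $\inf_{v\in H^1_\partial(M)}Q_u(v)=0$. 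Either way, the bracket in (\ref{integralminmaxformula}) collapses: $\mathcal{L}(u,u)-\inf_v Q_u(v)=\int_M|\nabla u|^2\,dM$.

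Next I would rewrite the formula of Theorem \ref{integralminmax} in this case as
$$\lambda_V^*=\inf_{\{u\in\mathcal{D}_\partial:\,\|u\|_{L^2}=1\}}\int_M|\nabla u|^2\,dM=\inf_{u\in\mathcal{D}_\partial}\frac{\int_M|\nabla u|^2\,dM}{\int_M u^2\,dM},$$
which is exactly the $f=0$ instance of the identity displayed in Proposition \ref{Proposition5.4}, with minimizer $\tilde u=\omega_f e^{-f/2}=\omega_0$. The only remaining point is to identify this infimum over the cone $\mathcal{D}_\partial$ with the classical Rayleigh characterization over all of $H^1_0(M)$. For this I would use the squeeze $C^2_c(M)\subset\mathcal{D}_\partial\subset H^1_0(M)$ (the first inclusion is the one already used in Proposition \ref{Proposition5.4}, the second is Lemma \ref{omegaVboundary}(3)) together with the density of $C^2_c(M)$ in $H^1_0(M)$ for the $H^1$-norm and the continuity of the Rayleigh quotient $u\mapsto\|\nabla u\|_{L^2}^2/\|u\|_{L^2}^2$ on $H^1_0(M)\setminus\{0\}$: these force the infimum over $\mathcal{D}_\partial$ to equal the infimum over $H^1_0(M)$, namely $\lambda_0$. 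Hence (\ref{integralminmaxformula}) reduces to $\lambda_0=\inf_{u\in H^1_0(M)}\|\nabla u\|_{L^2}^2/\|u\|_{L^2}^2$, the Rayleigh variational characterization of the first eigenvalue of $-\Delta_0$.

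There is essentially no obstacle in this argument; it is a direct specialization. The single step worth spelling out is the passage from the non-linear cone $\mathcal{D}_\partial$ to the linear space $H^1_0(M)$, and that is handled by the density/squeeze pair above, exactly as in the final paragraph of Proposition \ref{Proposition5.4}.
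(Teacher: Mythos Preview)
Your proposal is correct and follows exactly the route the paper intends: the corollary is stated immediately after Proposition~\ref{Proposition5.4} with no separate proof, and is simply the $f\equiv 0$ specialization of that proposition. Your explicit unpacking of the steps---showing $\inf_v Q_u(v)=0$ when $V=0$, and passing from the cone $\mathcal{D}_\partial$ to $H^1_0(M)$ via the density of $C^2_c(M)$---mirrors precisely the reasoning the paper sketches in the last sentences of Proposition~\ref{Proposition5.4}.
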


\section{Comparison results}

Let $\bar{M}$ be a smooth compact domain endowed with a smooth vector field 
$V$ for which the min-max formulas of Section 5 hold.  This is the case
when $\bar{M}$ is the domain of a smooth coordinate chart. Our first proposition 
is a straightforward application of the min-max integral formula for the 
principal eigenvalues $\lambda_V^*$ and  $\lambda_0^*$,  the second 
one  for  $V=0$. Let $\omega_V$ and $\omega_0$ be the respective 
$L^2$-unit principal eigenfunctions, and $w_{\omega_0}\in H^1_{\partial}(M)$ the 
function that realizes $\inf_v Q_{\omega_0}(v)$ in the  integral inequality 
(\ref{integralminmaxformula}).

\begin{proposition} \label{simple} 
The following inequalities hold:
$$ \lambda^*_V+\frac{1}{2}\int_M(\mathrm{div}^0(V)-2|\nabla w_{\omega_0}|^2)
\omega_0^2~dM ~ \leq ~ \lambda^*_0~ \leq ~\lambda^*_V 
+\frac{1}{2}\int_M \mathrm{div}^0({V})\omega_V^2\, dM. $$
Furthermore, equality holds for the right hand side inequality if and only if 
$\omega_V=\omega_0$. In this case,  
$\lambda_V^*-\lambda_0^*=g(V, \nabla\log \omega_0)$ on $M$, 
$g(V,\nu)=0$ on $\partial M$, and $\int_M \mathrm{div}^0(V)dM=0$, where
$\nu$ is the unit outer normal of $\partial M$.
Equality  holds for the left hand side inequality if and only if
$\omega_V=\alpha \omega_0 e^{w_{\omega_0}}$, where $\alpha$ is a 
normalizing constant.
\end{proposition}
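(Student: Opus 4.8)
The plan is to apply the min-max integral formula of Theorem \ref{integralminmax} twice, exploiting the fact that the two principal eigenvalues share the same ``ambient'' variational structure and differ only in the drift term. First I would write down what the formula gives: since $\omega_0 \in \mathcal{D}_\partial \cap H^1_0(M)$ is the $L^2$-unit principal eigenfunction for $V=0$ (the usual Laplacian), the infimum over $v \in H^1_\partial(M)$ of $Q_{\omega_0}(v)$ for the zero drift forces $v$ constant, so $\inf_v Q_{\omega_0}^{(0)}(v) = 0$ and $\lambda_0^* = \mathcal{L}^{(0)}(\omega_0,\omega_0) = \int_M |\nabla \omega_0|^2\,dM$. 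For the operator $\Delta_V$, testing the formula \eqref{integralminmaxformula} at the admissible function $u=\omega_0$ (which has $\|u\|_{L^2}=1$) yields $\lambda_V^* \leq \mathcal{L}(\omega_0,\omega_0) - \inf_{v}Q_{\omega_0}(v)$, where now $\mathcal{L}(\omega_0,\omega_0) = \int_M(|\nabla\omega_0|^2 + \omega_0\, g(V,\nabla\omega_0))\,dM$ and the infimum is realized by $w_{\omega_0}$.

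The second step is the algebraic bookkeeping relating $\int_M \omega_0\, g(V,\nabla\omega_0)\,dM$ to $\int_M \mathrm{div}^0(V)\omega_0^2\,dM$: integrating $\mathrm{div}^0(\omega_0^2 V) = 2\omega_0\, g(\nabla\omega_0,V) + \omega_0^2\,\mathrm{div}^0(V)$ over $M$ and using that $\omega_0$ vanishes on $\partial M$ gives $\int_M \omega_0\, g(V,\nabla\omega_0)\,dM = -\tfrac12\int_M \mathrm{div}^0(V)\omega_0^2\,dM$. Combined with \eqref{existence-wu}, namely $Q_{\omega_0}(w_{\omega_0}) = -\int_M|\nabla w_{\omega_0}|^2\omega_0^2\,dM$, substituting into the inequality $\lambda_V^* \leq \mathcal{L}(\omega_0,\omega_0) - Q_{\omega_0}(w_{\omega_0})$ produces exactly the left-hand inequality of the proposition after rearranging. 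For the right-hand inequality I would run the same argument with the roles reversed: test the min-max formula for $\Delta_0$ at $u=\omega_V$ (normalized), obtaining $\lambda_0^* = \mathcal{L}^{(0)}(\omega_V,\omega_V) - \inf_v Q^{(0)}_{\omega_V}(v) \leq \int_M|\nabla\omega_V|^2\,dM$ since the zero-drift $Q$-infimum is $0$; then compare with the true value $\lambda_V^* = \mathcal{L}(\omega_V,\omega_V) - Q_{\omega_V}(w_{\omega_V})$, where $w_{\omega_V} = -\tfrac12\log G_V$ by \eqref{achieved-wuV}, and use the same integration by parts to rewrite $\int_M \omega_V g(V,\nabla\omega_V)\,dM = -\tfrac12\int_M\mathrm{div}^0(V)\omega_V^2\,dM$.

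The main obstacle, and the part requiring the most care, is the equality discussion. For the right-hand inequality, equality means $\omega_V$ is (up to scaling) the minimizer of the $\Delta_0$ min-max formula, hence $\omega_V = \omega_0$; I would then need to extract the pointwise identity $\lambda_V^* - \lambda_0^* = g(V,\nabla\log\omega_0)$ by comparing the two eigenvalue equations $\Delta_0\omega_0 + \lambda_0^*\omega_0 = 0$ and $\Delta_V\omega_V + \lambda_V^*\omega_V = 0 = \Delta_0\omega_0 - g(V,\nabla\omega_0) + \lambda_V^*\omega_0$, dividing by $\omega_0 > 0$ on $M$. The boundary identity $g(V,\nu)=0$ on $\partial M$ and $\int_M\mathrm{div}^0(V)\,dM = 0$ should follow by multiplying the pointwise identity by $\omega_0^2$, integrating, using the divergence identity above, and then a Hopf-type boundary analysis (the normal derivative of $\omega_0$ is nonzero on $\partial M$, so the boundary term in Stokes forces $g(V,\nu)=0$ there, after which $\int_M\mathrm{div}^0(V)\,dM = \int_{\partial M}g(V,\nu) = 0$). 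For the left-hand inequality, equality forces the completing-the-square inequality \eqref{square} to be an equality $u^2\,dM$-a.e.\ with $u=\omega_0$, whose equality case $Z=-2X-V$ (from (C)) translates into $\nabla\log\omega_V = \nabla\log\omega_0 + \nabla w_{\omega_0}$, i.e.\ $\omega_V = \alpha\,\omega_0 e^{w_{\omega_0}}$ for a constant $\alpha$; I would need to confirm that the chain of inequalities in the proof of Theorem \ref{integralminmax} collapses precisely under this condition and that this $\omega_V$ is indeed admissible, which is where I expect the bulk of the verification effort to lie.
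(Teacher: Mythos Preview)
Your treatment of the left-hand inequality is exactly the paper's: apply Theorem~\ref{integralminmax} for $\lambda_V^*$ at the test function $u=\omega_0$, use \eqref{existence-wu}, and convert $\int_M\omega_0\,g(V,\nabla\omega_0)\,dM$ into $-\tfrac12\int_M\mathrm{div}^0(V)\omega_0^2\,dM$ via Stokes. The equality discussion there is also correct.

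The right-hand inequality, however, has a gap. You invoke ``the true value $\lambda_V^* = \mathcal{L}(\omega_V,\omega_V) - Q_{\omega_V}(w_{\omega_V})$'', but $\omega_V$ is \emph{not} the minimizer in \eqref{integralminmaxformula}: the infimum is achieved at $u_V=\omega_V\sqrt{G_V}$ (see \eqref{achieved-uV}), and $G_V$ is constant only when $V$ is a gradient (Proposition~\ref{Proposition5.4}). In general one only has the inequality $\lambda_V^*\le \mathcal{L}(\omega_V,\omega_V) - Q_{\omega_V}(w_{\omega_V})$, and a direct computation shows the right-hand side equals $\lambda_V^*+\int_M\omega_V^2|\nabla w_{\omega_V}|^2\,dM$, with the surplus term nonzero whenever $w_{\omega_V}$ is nonconstant. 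So this route does not yield the desired identity $\int_M|\nabla\omega_V|^2\,dM=\lambda_V^*+\tfrac12\int_M\mathrm{div}^0(V)\omega_V^2\,dM$.

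The paper avoids this entirely: it obtains that identity by multiplying the eigenvalue equation $-\Delta_0\omega_V+g(V,\nabla\omega_V)=\lambda_V^*\omega_V$ by $\omega_V$, integrating, and using Stokes (with $\omega_V=0$ on $\partial M$). This gives $\int_M|\nabla\omega_V|^2\,dM=\lambda_V^*+\tfrac12\int_M\mathrm{div}^0(V)\omega_V^2\,dM$ as an exact equality, after which the ordinary Rayleigh inequality $\lambda_0^*\le\int_M|\nabla\omega_V|^2\,dM$ finishes the job, with equality iff $\omega_V=\omega_0$. Your subsequent analysis of that equality case (comparing the two eigenvalue equations, the Hopf boundary argument forcing $g(V,\nu)=0$, and the divergence theorem giving $\int_M\mathrm{div}^0(V)\,dM=0$) is fine and matches the paper once the identity is established correctly.
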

\noindent
\begin{proof}  Using the Rayleigh characterization 
of $\lambda_0^*$ applied to $\omega_V$, and Stokes's theorem,
\begin{eqnarray*}
\lambda_0^* &\leq& \int_M \|\nabla \omega_V\|^2dM\\ 
&=&  \lambda^*_V \int_M \omega_V^2dM 
-\frac{1}{2}\int_Mg({V},\nabla \omega_V^2)dM=
 \lambda^*_V +\frac{1}{2}\int_M \mathrm{div}^0({V}) \omega_V^2\,  dM.
\end{eqnarray*}
Equality holds if and only if $\omega_V=\omega_0$. In this case,
$$\lambda_V^*\omega_V=-\Delta_V\omega_V=-\Delta_V\omega_0=
-\Delta_0\omega_0 +g(V,\nabla\omega_0)=\lambda_0^*\omega_0
+g(V,\nabla\omega_0).$$
Thus,  $(\lambda_V^*-\lambda_0^*)\omega_0=g(V, \nabla \omega_0)$. 
Consequentely, along $\partial M$, $ g(V, \nabla \omega_0)= 
\frac{\partial \omega_0}{\partial \nu}g(V,\nu)$ must vanish.
This is possible only  if $g(V,\nu)=0$. 
Now,  applying the min-max integral formula for $\lambda_V^*$ with respect to
$\omega_0$ we get the left hand side inequality.  Equality holds if and only if 
$\omega_0=\alpha^{-1}u_V= \alpha^{-1}\omega_V\sqrt{G_V}$, as 
seen in (\ref{achieved-uV}), for $\alpha^2=\int_M u_V^2dM$. 
Since $Q_{u_V}=\alpha^2Q_{\omega_0}$, then $w_{u_V}=w_{\omega_0}$.
By (\ref{achieved-wuV}), $\sqrt{G_V}=e^{-w_{u_V}}=e^{-w_{\omega_0}}$. \qed
\end{proof} 

As a consequence of the previous proposition and its proof, and of Proposition 
\ref{Proposition5.4}, we have the following two corollaries: 
 
\begin{corollary}\label{lambdav-lambda0} If $\mathrm{div}^0(V)\leq 0$, then 
$\lambda^*_0\leq \lambda^*_V$. Equality of the eigenvalues  holds if and only if
 $\mathrm{div}^0(V)=0$ and  $\omega_V=\omega_0$. In this case
$V\bot \nabla \omega_0$ on $M$ and 
$V\bot \nu$ along $\partial M$.\end{corollary}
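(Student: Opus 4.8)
The plan is to derive Corollary \ref{lambdav-lambda0} directly from Proposition \ref{simple}, which has just been established. The right hand inequality of Proposition \ref{simple} reads
$$\lambda^*_0\ \leq\ \lambda^*_V+\frac{1}{2}\int_M\mathrm{div}^0(V)\,\omega_V^2\,dM.$$
First I would observe that if $\mathrm{div}^0(V)\leq 0$ on $M$, then the integral term is $\leq 0$, so $\lambda^*_0\leq\lambda^*_V$ is immediate. This handles the inequality; the remaining work is the equality case.

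For the equality discussion, suppose $\lambda^*_0=\lambda^*_V$. Since $\mathrm{div}^0(V)\leq 0$ and $\omega_V^2>0$ on $M$, the chain of inequalities forces both $\int_M\mathrm{div}^0(V)\,\omega_V^2\,dM=0$ and equality in the right hand side of Proposition \ref{simple}. The first, together with $\mathrm{div}^0(V)\leq 0$ and $\omega_V>0$ on $M$, gives $\mathrm{div}^0(V)=0$ pointwise on $M$ (a nonpositive continuous function with zero weighted integral against a strictly positive weight must vanish). The second, by the equality clause of Proposition \ref{simple}, gives $\omega_V=\omega_0$, together with the relations $(\lambda_V^*-\lambda_0^*)\omega_0=g(V,\nabla\omega_0)$ on $M$ and $g(V,\nu)=0$ on $\partial M$. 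Since $\lambda_V^*=\lambda_0^*$, the first relation collapses to $g(V,\nabla\omega_0)=0$ on $M$, i.e. $V\perp\nabla\omega_0$; and the boundary statement is exactly $V\perp\nu$ along $\partial M$. Conversely, if $\mathrm{div}^0(V)=0$ and $\omega_V=\omega_0$, the right hand inequality of Proposition \ref{simple} becomes an equality, so $\lambda^*_0=\lambda^*_V$.

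Essentially everything needed is already packaged inside Proposition \ref{simple} and its proof, so there is no real obstacle here — the corollary is a specialization. The only point requiring a small remark is the pointwise vanishing of $\mathrm{div}^0(V)$: one should note that $\mathrm{div}^0(V)$ is continuous (as $V$ is smooth, or at least $C^{1,\alpha}$) and $\omega_V$ is continuous and strictly positive on the open set $M$, so a sign-definite integrand integrating to zero against a strictly positive weight forces it to be identically zero. I would present the proof in two or three lines invoking Proposition \ref{simple}, then add the one-sentence justification of the pointwise vanishing, and conclude by reading off $V\perp\nabla\omega_0$ and $V\perp\nu$ from the already-derived identities with $\lambda_V^*=\lambda_0^*$ substituted in.
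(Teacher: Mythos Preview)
Your proposal is correct and follows exactly the approach the paper intends: the corollary is stated as an immediate consequence of Proposition \ref{simple} and its equality analysis, and you have unpacked precisely that. The only content beyond a direct citation is the pointwise vanishing of $\mathrm{div}^0(V)$, which you justify correctly via continuity and the strict positivity of $\omega_V$ on $M$.
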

\begin{corollary}\label{lambdav-lambda0-f}
 Let us suppose $V=\nabla f$, for some  $f\in C^{\infty}(\bar{M})$.\\
$(1)$ Assume $\Delta_0f\leq 0$. Then  $\lambda^*_0\leq \lambda_{ f}$, and
equality holds if and only if $f$ is a harmonic function  and 
$\omega_0=\omega_f$.
In this case, $\nabla f\bot \nabla \omega_0$  pointwise on $M$.\\[1mm]
$(2)$ Assume for some constant $\epsilon>  0$, 
$\Delta_0f\geq \frac{\epsilon}{2}|\nabla f|^2$ holds. Then  
$\lambda_0^*\geq \lambda_{\epsilon f}$, and  equality holds if and only 
if  $\Delta_0f= \frac{\epsilon}{2}|\nabla f|^2$.
\end{corollary}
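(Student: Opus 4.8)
\textbf{Proof proposal for Corollary~\ref{lambdav-lambda0-f}.}

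The plan is to specialize Proposition~\ref{simple} and Corollary~\ref{lambdav-lambda0} to the gradient case $V=\nabla f$, and then also to $V=\nabla(\epsilon f)=\epsilon\nabla f$, using throughout the identification from Proposition~\ref{Proposition5.4} that for a gradient field the min-max formula collapses to the Rayleigh quotient for $\Delta_f$, so $\lambda_V^*=\lambda_f$.

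For part $(1)$, I would simply invoke Corollary~\ref{lambdav-lambda0} with $V=\nabla f$: the hypothesis $\mathrm{div}^0(V)=\Delta_0 f\leq 0$ is exactly what is required, and it yields $\lambda_0^*\leq\lambda_V^*=\lambda_f$ (the last equality by Proposition~\ref{Proposition5.4}). The equality clause of Corollary~\ref{lambdav-lambda0} gives that equality holds iff $\mathrm{div}^0(V)=0$, i.e.\ $f$ is harmonic, and $\omega_V=\omega_0$; since $\omega_V=\omega_f$ in the gradient case, this is $\omega_0=\omega_f$. The same corollary also records $V\perp\nabla\omega_0$ pointwise, i.e.\ $\nabla f\perp\nabla\omega_0$ on $M$, which is the last assertion. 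So part $(1)$ is essentially a translation of an already-proved statement.

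For part $(2)$, I would apply the \emph{right-hand} inequality of Proposition~\ref{simple}, but now with the vector field $W:=\nabla(\epsilon f)=\epsilon\nabla f$ in place of $V$. That inequality reads $\lambda_0^*\leq \lambda_W^* + \tfrac12\int_M \mathrm{div}^0(W)\,\omega_W^2\,dM$; however I actually want the reverse direction, so instead I would use the \emph{left-hand} inequality applied with $W$, or more cleanly apply the right-hand inequality of Proposition~\ref{simple} with the roles reversed — that is, read Proposition~\ref{simple} as a statement comparing $\lambda_0^*$ to $\lambda_W^*$ and extract $\lambda_0^*\geq \lambda_W^* + \tfrac12\int_M(\mathrm{div}^0(W)-2|\nabla w_{\omega_0}|^2)\omega_0^2\,dM$. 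Now $\mathrm{div}^0(W)=\epsilon\Delta_0 f$ and $|W|^2=\epsilon^2|\nabla f|^2$; the hypothesis $\Delta_0 f\geq \tfrac{\epsilon}{2}|\nabla f|^2$ multiplied by $\epsilon$ gives $\mathrm{div}^0(W)\geq\tfrac12|W|^2\geq 0$, and in particular $\mathrm{div}^0(W)\geq 2|\nabla w_{\omega_0}|^2$ is \emph{not} automatic — so the cleanest route is in fact to go back to the completing-of-squares step underlying Theorem~\ref{integralminmax}. Concretely: by Proposition~\ref{Proposition5.4}, $\lambda_{\epsilon f}=\lambda_W^*$ equals the Rayleigh quotient $\inf_{\tilde u}\frac{\int_M|\nabla\tilde u+\tfrac{\epsilon}{2}\tilde u\nabla f|^2dM}{\int_M\tilde u^2 dM}$; testing this with $\tilde u=\omega_0$ and expanding the square gives
$$\lambda_{\epsilon f}\leq \int_M\Bigl(|\nabla\omega_0|^2+\epsilon\,\omega_0\,g(\nabla f,\nabla\omega_0)+\tfrac{\epsilon^2}{4}\omega_0^2|\nabla f|^2\Bigr)dM.$$
The first term is $\lambda_0^*$, and Stokes applied to $\mathrm{div}^0(\tfrac{\epsilon}{2}\omega_0^2\nabla f)$ turns $\epsilon\int_M\omega_0\,g(\nabla f,\nabla\omega_0)dM$ into $-\tfrac{\epsilon}{2}\int_M\Delta_0 f\,\omega_0^2\,dM$, so
$$\lambda_{\epsilon f}\leq \lambda_0^* + \tfrac12\int_M\omega_0^2\Bigl(\tfrac{\epsilon^2}{2}|\nabla f|^2-\epsilon\Delta_0 f\Bigr)dM \leq \lambda_0^*,$$
the last step by the hypothesis $\epsilon\Delta_0 f\geq\tfrac{\epsilon^2}{2}|\nabla f|^2$. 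Equality forces $\omega_0$ to be the minimizer, hence $\omega_0=\omega_{\epsilon f}$ up to scale, and forces the integrand to vanish $\omega_0^2$-a.e.; since $\omega_0>0$ on $M$ this gives $\Delta_0 f=\tfrac{\epsilon}{2}|\nabla f|^2$ everywhere, and conversely that pointwise identity makes every inequality above an equality. The main obstacle is bookkeeping the sign conventions and making sure the ``reverse'' comparison is extracted legitimately from Proposition~\ref{simple} (or, as above, re-derived directly from the Rayleigh quotient), rather than any deep new estimate.
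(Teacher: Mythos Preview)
Your argument for part~(1) is correct and is exactly the paper's intended route: Corollary~\ref{lambdav-lambda0} applied to $V=\nabla f$.

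For part~(2) your final argument is correct, but you took an unnecessary detour. You write that ``$\mathrm{div}^0(W)\geq 2|\nabla w_{\omega_0}|^2$ is \emph{not} automatic'' and therefore abandon the left-hand inequality of Proposition~\ref{simple}. In fact it \emph{is} automatic: Proposition~\ref{Proposition5.4} says that for a gradient field $W=\nabla(\epsilon f)$ one has $w_u=\epsilon f/2$ for \emph{every} $u\in\mathcal{D}_\partial$, in particular $w_{\omega_0}=\epsilon f/2$. Hence $2|\nabla w_{\omega_0}|^2=\tfrac{\epsilon^2}{2}|\nabla f|^2=\tfrac12|W|^2$, and the hypothesis $\epsilon\Delta_0 f\geq \tfrac{\epsilon^2}{2}|\nabla f|^2$ is precisely $\mathrm{div}^0(W)\geq 2|\nabla w_{\omega_0}|^2$. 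The left inequality of Proposition~\ref{simple} then gives $\lambda_{\epsilon f}\leq\lambda_0^*$ immediately. Your Rayleigh-quotient computation with test function $\omega_0$ is not wrong --- it literally re-derives the same inequality (indeed the integrand you obtain, $\tfrac{\epsilon^2}{2}|\nabla f|^2-\epsilon\Delta_0 f$, equals $2|\nabla w_{\omega_0}|^2-\mathrm{div}^0(W)$) --- but the paper's point is that this case is already packaged in Proposition~\ref{simple} once Proposition~\ref{Proposition5.4} identifies $w_{\omega_0}$.

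One small gap in your equality analysis for~(2): you assert that the pointwise identity $\Delta_0 f=\tfrac{\epsilon}{2}|\nabla f|^2$ ``makes every inequality above an equality.'' It makes the \emph{second} inequality an equality, but the first (testing the Rayleigh quotient with $\omega_0$) is an equality only if $\omega_0$ is the minimizer, i.e.\ $\omega_{\epsilon f}=\alpha\,\omega_0 e^{\epsilon f/2}$ (cf.\ the equality clause in Proposition~\ref{simple}). This does follow: a direct computation shows that when $\Delta_0 f=\tfrac{\epsilon}{2}|\nabla f|^2$ the function $\phi=\omega_0 e^{\epsilon f/2}$ satisfies $\Delta_{\epsilon f}\phi=-\lambda_0^*\phi$, $\phi>0$ on $M$, $\phi=0$ on $\partial M$, hence $\phi$ is the principal eigenfunction and $\lambda_{\epsilon f}=\lambda_0^*$. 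You should say this rather than declare it automatic.
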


\noindent
Proposition \ref{simple} with $V=\epsilon \nabla f$, and   $\epsilon>0 $ a constant, 
give us 
$$\frac{\epsilon}{2}\int_M(\Delta_0f -\frac{\epsilon}{2}|\nabla f|^2)
\,\omega^2_0\, dM~\leq~ \lambda_0^*-\lambda_{\epsilon f}
~\leq~ \frac{\epsilon}{2}\int_M \Delta_0f \,\omega_{\epsilon f}^2\, dM. $$
Similar reasoning for $\epsilon <0$ leads to  the following consequence for 
the Bakry-\'{E}mery  first eigenvalue.
\begin{corollary} If for some sequence $\epsilon_i\to 0$, 
the limit $\lambda' _f=\lim_{\epsilon_i\to 0}\frac{1}{\epsilon_i}(
\lambda_{\epsilon_if}-\lambda_0^*)$ exists, then 
$-\frac{1}{2}(\sup_M \Delta_0 f) \leq \lambda' _f\leq 
-\frac{1}{2}(\inf_M \Delta_0 f).$
Consequently, if $f\in C^{\infty}(\bar{M})$ is a harmonic function on $M$, 
or more generally, $\Delta_0f=2c_0$ a constant,  then
$\frac{d}{d\epsilon}\la{|}_{\epsilon=0}\lambda_{\epsilon f}$ exists and
it is equal to $-c_0$.
\end{corollary}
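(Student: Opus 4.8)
The plan is to derive the final corollary directly from the displayed double inequality that immediately precedes it, which in turn comes from applying Proposition~\ref{simple} with $V=\epsilon\nabla f$ and $f$ replaced by a constant multiple. Concretely, for $\epsilon>0$ the right-hand inequality of Proposition~\ref{simple} applied to $V=\epsilon\nabla f$ gives $\lambda_0^*-\lambda_{\epsilon f}\le\frac{\epsilon}{2}\int_M\Delta_0 f\,\omega_{\epsilon f}^2\,dM$, while the left-hand inequality gives $\lambda_0^*-\lambda_{\epsilon f}\ge\frac{\epsilon}{2}\int_M(\Delta_0 f-\frac{\epsilon}{2}|\nabla f|^2)\,\omega_0^2\,dM$. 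Dividing by $\epsilon>0$ and using $\int_M\omega_0^2\,dM=\int_M\omega_{\epsilon f}^2\,dM=1$ together with the pointwise bounds $\inf_M\Delta_0 f\le\Delta_0 f\le\sup_M\Delta_0 f$, I would obtain
$$
\tfrac12\inf_M\Delta_0 f-\tfrac{\epsilon}{4}\sup_M|\nabla f|^2
\ \le\ \tfrac1\epsilon(\lambda_0^*-\lambda_{\epsilon f})\ \le\ \tfrac12\sup_M\Delta_0 f.
$$

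For $\epsilon<0$ the inequalities in Proposition~\ref{simple} hold with $V=\epsilon\nabla f$ as well, but dividing by the negative number $\epsilon$ reverses them, producing the complementary bound
$$
\tfrac12\inf_M\Delta_0 f\ \le\ \tfrac1\epsilon(\lambda_{\epsilon f}-\lambda_0^*)\ \le\ \tfrac12\sup_M\Delta_0 f-\tfrac{\epsilon}{4}\sup_M|\nabla f|^2,
$$
where now the error term $-\tfrac{\epsilon}{4}\sup_M|\nabla f|^2$ is positive but tends to $0$. Thus along any sequence $\epsilon_i\to0$ (of either sign), the difference quotient $\tfrac1{\epsilon_i}(\lambda_{\epsilon_i f}-\lambda_0^*)$ is squeezed between quantities converging to $\tfrac12\inf_M\Delta_0 f$ from below and to $\tfrac12\sup_M\Delta_0 f$ from above; hence if the limit $\lambda'_f$ exists it must satisfy $-\tfrac12\sup_M\Delta_0 f\le\lambda'_f\le-\tfrac12\inf_M\Delta_0 f$ (noting the sign convention $\lambda'_f=\lim\tfrac1{\epsilon_i}(\lambda_{\epsilon_i f}-\lambda_0^*)$ matches $-\lim\tfrac1{\epsilon_i}(\lambda_0^*-\lambda_{\epsilon_i f})$).

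Finally, under the hypothesis $\Delta_0 f=2c_0$ constant, we have $\inf_M\Delta_0 f=\sup_M\Delta_0 f=2c_0$, so the two bounding quantities coincide at $-c_0$. The squeeze then forces the full limit (not merely along a subsequence) to exist and equal $-c_0$; in particular $\frac{d}{d\epsilon}\big|_{\epsilon=0}\lambda_{\epsilon f}=-c_0$. I would remark that harmonic $f$ is the special case $c_0=0$, recovering $\frac{d}{d\epsilon}\big|_{\epsilon=0}\lambda_{\epsilon f}=0$.

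The only genuinely delicate point is uniformity of the error terms: one must check that $\sup_M|\nabla f|^2$ is finite (immediate since $f\in C^\infty(\bar M)$ and $\bar M$ is compact) and, more importantly, that the constants $c_1,c_2$ and the eigenfunction normalizations hiding inside Proposition~\ref{simple} do not themselves blow up as $\epsilon\to0$ --- but this is automatic because the only $\epsilon$-dependence that survives after dividing by $\epsilon$ is the explicit linear factor already extracted, and $\omega_0$, $w_{\omega_0}$ are fixed (independent of $\epsilon$) while $\omega_{\epsilon f}$ enters only through its $L^2$-normalization. So the main ``obstacle'' is really just bookkeeping the sign of $\epsilon$ correctly when dividing the inequalities; there is no analytic difficulty beyond what Proposition~\ref{simple} already supplies.
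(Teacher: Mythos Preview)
Your approach is exactly the paper's: the corollary is stated immediately after the displayed double inequality obtained from Proposition~\ref{simple} with $V=\epsilon\nabla f$, and the paper simply says ``similar reasoning for $\epsilon<0$'' without further detail, so your expansion is precisely what is intended.

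There is, however, a sign slip in your second display. For $\epsilon<0$, dividing the inequality $\frac{\epsilon}{2}\int_M(\Delta_0 f-\frac{\epsilon}{2}|\nabla f|^2)\omega_0^2\,dM\le \lambda_0^*-\lambda_{\epsilon f}\le \frac{\epsilon}{2}\int_M\Delta_0 f\,\omega_{\epsilon f}^2\,dM$ by $\epsilon$ and reversing gives bounds on $\tfrac{1}{\epsilon}(\lambda_0^*-\lambda_{\epsilon f})$, not on $\tfrac{1}{\epsilon}(\lambda_{\epsilon f}-\lambda_0^*)$; as written, your second display has the wrong middle term (or, equivalently, the bounds are missing a minus sign). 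You then say the difference quotient is squeezed between $\tfrac12\inf_M\Delta_0 f$ and $\tfrac12\sup_M\Delta_0 f$, yet conclude $-\tfrac12\sup\le\lambda'_f\le-\tfrac12\inf$, which does not follow from the sentence as written. The correct statement is that $\tfrac{1}{\epsilon}(\lambda_0^*-\lambda_{\epsilon f})$ is squeezed between $\tfrac12\inf_M\Delta_0 f$ and $\tfrac12\sup_M\Delta_0 f$ (up to $O(\epsilon)$ errors) for \emph{both} signs of $\epsilon$, and then $\lambda'_f=-\lim\tfrac{1}{\epsilon}(\lambda_0^*-\lambda_{\epsilon f})$ gives the stated bounds. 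This is purely the bookkeeping you yourself flagged as the only delicate point; once corrected, the argument is complete and identical to the paper's.
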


Next we will define suitable  models spaces, based on 
pointwise estimates of the radial curvatures  and  of the radial component of 
$V$. These model spaces will establish estimates for the principal eigenvalue 
of a geodesic ball of $N$ by comparing it with the corresponding ones of the
model spaces.

The exponential map of $N$ from a given point $p_0$ is a smooth diffeomorphism
$\exp_{p_0}:\mathcal{D}_{p_0}\to N\backslash C(p_0)$ from the star-shaped open set 
$ \mathcal{D}_{p_0}=\{ t\xi:\,  0\leq t\leq d_{\xi}, \, \xi \in S^{m-1}_{p_0}
\subset T_{p_0}N \}$, 
onto the open dense set $ N\backslash C(p_0)$ of $N$, where $C(p_0)$ is the cut locus 
at $p_0$ and $d_{\xi}$ is the largest $t$ for which $\gamma_{\xi}(s)
=\exp_{p_0}(s\xi)$ is a minimizing geodesic for all $0< s\leq t$.
This diffeomorphism defines on $ N\backslash C(p_0)$ the geodesic coordinate chart,
 $\hat{\Theta}(t,\xi)=\exp_{p_0}(t\xi)$.
In these coordinates the metric $g$ can be expressed as
$$ g(\hat{\Theta}(t,\xi))=dt^2+|\mathcal{A}(t,\xi)d\xi|^2,
\quad\quad \forall t\xi\in  \mathcal{D}_{p_0}.$$
Here,  $\mathcal{A}(t,\xi):\xi^{\bot}\to \xi^{\bot}$  is the linear operator
given by $\mathcal{A}(t,\xi)\eta=\tau_t^{-1}Y_{\eta}$, where
$Y_{\eta}(t)=d(\exp_{p_0})_{(t\xi)}(t\eta)$ is the Jacobi field along
the geodesic $\gamma_{\xi}(t)$, with initial conditions, $Y_{\eta}(0)=0$,
$\nabla_{\partial_t}Y_{\eta}(0)=\eta$, and
$\tau_t:T_{p_0}M\to T_{\gamma_{\xi}(t)}$ is the parallel transport
along $\gamma_{\xi}$. It satisfies the Jacobi equation
$\mathcal{A}''+\mathcal{RA}=0$, with $\mathcal{A}(0,\xi)=0$,
$\mathcal{A}' (0,\xi)=Id$, where $\mathcal{R}(t):\xi^{\bot}\to \xi^{\bot}$
is the self-adjoint operator, $\mathcal{R}(t)\eta=(\tau_t)^{-1}
R(\gamma_{\xi}'(t), \tau_t\eta)\gamma_{\xi}'(t)$. The trace of $\mathcal{R}(t)$
is  the radial Ricci tensor, 
$\mathrm{Ricci}_{(\gamma_{\xi}(t))}(\gamma_{\xi}'(t),\gamma_{\xi}'(t))$. 
We define a non-negative smooth function $J$  on $\mathcal{D}_{p_0}$, such that
 $$J^{m-1}= \mathrm{det} \mathcal{A}.$$
Let $dS$ be  the volume element of  $\mathbb{S}^{m-1}$, and $r(p)=d(p,p_0)$
the intrinsic distance of $p$ to $p_0$ in $N$. The square $r^2(p)$ 
 is smooth on $N\backslash C(p_0)$ (cf.\ \cite{Gray}, 
Section 3.2), and the gradient $\nabla r$ is a unit vector field. For $t>0$, 
it satisfies the equality, $\nabla r(p)=\gamma'_{\xi}(t)$, for $p=\gamma_{\xi}(t)$,
and defines the  radial direction $\partial_t(p)=\nabla r(p)$ at each $p\neq p_0$. 
In these geodesic coordinates $(t,\xi)$, $~dV_M=J^{m-1}(t,\xi)\,dt\,dS$ expresses the 
volume element of $M$. The function $J$ satisfies the following 
equations and inequalities (cf.\cite{FMS})
\[
\Delta_0 r=\partial_t\ln(J^{m-1}),\quad\quad
\partial_t\Delta_0 r +\|\mathrm{Hess}\, r\|^2
=-\mathrm{Ricci}(\partial_t,\partial_t),
\]
\[
\left\{ \begin{array}{l} 
 (m-1)J''(t,\xi) + \mathrm{Ricci}(\gamma_{\xi}'(t), \gamma_{\xi}'(t))J(t,\xi)\, 
\leq\,  0,\\
 J(0,\xi)=0, \quad J' (0,\xi)=1.
\end{array}\right.
\]
Note that $r\Delta_0r=\frac{1}{2}(\Delta_0 r^2-1)$ is  smooth, 
and applying L'H\^{o}pital's rule, 
\begin{equation}\label{distanceeqs}
 \lim_{t\to 0^+} ~r\,\Delta_0 r(t,\xi)~=~
\lim_{t\to 0^+} ~(m-1)\,\frac{t}{J(t,\xi)}\, J'(t,\xi)~=~(m-1).
\end{equation}
We are assuming $r_0<\mathrm{inj}(p_0)$, so that 
$\bar{M}\subset N\backslash C(p_0)$, and $d_{\xi}\geq
\mathrm{inj}(p_0)$, $\forall \xi$. The restriction of the geodesic
coordinates, $\hat{\Theta}:[0,r_0]\times \mathbb{S}^{m-1}\to \bar{M}$, 
defines the spherical geodesic coordinate of $M$ centered at $p_0$.
It satisfies $\frac{d\hat{\Theta}}{dt}=\nabla r$. We have the following identities
holding for any function $\phi\in C^1(M)$, 
$$\frac{d\phi}{dt}=\phi'(t,\xi)=\frac{d(\phi \circ \hat{\Theta})}{dt}=
g(\nabla \phi, \nabla r)=\partial_t\phi.$$
For any radial function $F$ on $M$, i.e  $F(p)=T(r(p))$, where 
$T:[0,r_0]\to \mathbb{R}$ is of class $C^{2}$,  satisfying $T' (0)=0$, 
 we have (cf. \cite{Cha})
\begin{equation}\label{radialeqs}
\begin{array}{l}
\nabla F(p)=T'(r(p))\nabla r= T'\partial_t\\
\Delta_0F(p)=T''(r(p))+(\partial_r\log J^{m-1}) T' (r(p))= T'' 
+ \Delta_0 r\, T'. 
\end{array}
\end{equation}

We decompose the vector field  $V$ as ${V}=V_{rad}+V_s$, where
$V_{rad}=h_1(t,\xi)\nabla r$, with $h_1(t,\xi):=g({V},\nabla r)$ the radial 
component of ${V}$, and $V_s$ the $g$-orthogonal complement of $V_{rad}$.
We say $V$ is a radial vector field if $V=V_{rad}$. It is smooth if  
$h(0,\xi)=0$  for any $\xi$, to be more precise, if $h(t,\xi)=
t\tilde{h}(t,\xi)$ for some smooth function $\tilde{h}$. 
If $V$ is not radial, $V_{rad}$ is not assumed to be smooth. 

As in \cite{FMS}, we consider a model space  
$N^{\rho}=[0,l)\times_{\rho} \mathbb{S}^{m-1}$,
and a geodesic ball $M^{\rho}$ centered at the origin $p_{\rho}$ with  
 radius $r_0$. We are assuming $r_0<\min\{l, \mathrm{inj}(p_{0})\}$, 
and  take  a radial vector field $V^{\rho}=h(t)\partial_t=\nabla H$, 
where $H'(t)=h(t)$ and $h(0)=0$. 
On the model space, $\mathcal{A}(t,\xi)=\rho(t)Id$, $J=\rho$,
and $d((t,\xi), \partial M^{\rho})=r_0-t$, for $t\leq r_0$.
The properties of the positive  principal eigenfunction   
$\omega_{\rho,H}(r)$ on  $M^{\rho}$  are described in  
Proposition \ref{radialmodel}.

The ratio of the volume elements of $M$ and $M^{\rho}$ 
is a fundamental tool to derive comparison 
results based on relations between radial curvatures:
\begin{equation} \label{ratio}
\theta(t,\xi)~=~\frac{dM(p)}{dM^{\rho}(p)}~=~
\left[\frac{J(t,\xi)}{\rho(t)}\right]^{m-1},\quad \theta(0,\xi)=1,
\end{equation}
where $p=\hat{\Theta}(t, \xi)$. Comparison on radial curvatures corresponds  to  
 nondecreasing or nonincreasing $\theta(t,\xi)$ on $[0,r_0)$, and consequent 
 inequality on the volumes of the geodesic balls of radius $t<r_0$  
(see \cite{FMS}, generalized Bishop's comparison Theorems 4.2 and 3.3). 
We start by recalling the  comparison result of  \cite{FMS} for the first 
eigenvalue of $\Delta_0$ on a geodesic ball with radial sectional curvatures  
bounded from above by those of the  model space.
\begin{theorem}  Assume the radial sectional curvatures of  
$\bar{M}=\bar{B}_{r_0}(p_0)$ are bounded from above by the ones of the model 
space $(\bar{M}^{\rho}, H=0)$, that is,
$K(\partial_t,X)\leq -\frac{\rho''(t)}{\rho(t)}$, for all unit $X\in T_pM$
orthogonal to $\partial_t(p)$. Then
$\lambda_0^*\geq \lambda_{\rho, H=0}$, and  equality holds
if and only if $M$ is isometric to $M^{\rho}$.
\end{theorem}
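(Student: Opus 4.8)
The statement to prove is the comparison theorem for $\lambda_0^*$ under the radial sectional curvature bound $K(\partial_t,X)\le -\rho''(t)/\rho(t)$, with rigidity. The plan is to combine the generalized Barta-type inequality of Corollary~\ref{Barta12} with the radial eigenfunction $\omega_{\rho,H=0}$ of the model space, pulled back to $M$ as a radial function. Since $V=0$ here, the two formulas in Corollary~\ref{Barta12} coincide and give $\lambda_0^*\ge \inf_M(-\Delta_0 u/u)$ for any $u\in\mathcal{D}_0^+$. So the first step is to take $u=F$, where $F(p):=\omega_{\rho,H=0}(r(p))$ with $r(p)=d(p,p_0)$: this is smooth and positive on $M$ (the model eigenfunction is positive on $[0,r_0)$, even at $0$), vanishes on $\partial M$, and lies in $\mathcal{D}_0^+$ because $\omega_{\rho}'(r_0)<0$ by Proposition~\ref{radialmodel}, so $F$ behaves like $c\,d_{\partial M}$ near the boundary; hence $F\in\mathcal{D}_0^+\cap H_0^1(M)$.

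Next I would compute $-\Delta_0 F/F$ pointwise. Using the radial formula (\ref{radialeqs}), $\Delta_0 F = \omega_\rho'' + (\Delta_0 r)\,\omega_\rho'$, where all $\omega_\rho$-derivatives are evaluated at $t=r(p)$. On the model side, the eigenvalue equation (\ref{principaleigenmodel}) with $H=0$ reads $\omega_\rho'' + \frac{(m-1)\rho'}{\rho}\,\omega_\rho' + \lambda_{\rho,0}\,\omega_\rho = 0$, i.e.\ $\omega_\rho'' = -\lambda_{\rho,0}\,\omega_\rho - (m-1)\frac{\rho'}{\rho}\,\omega_\rho'$. Therefore
\[
-\frac{\Delta_0 F}{F} = \lambda_{\rho,0} + \Big((m-1)\frac{\rho'(t)}{\rho(t)} - \Delta_0 r\Big)\frac{\omega_\rho'(t)}{\omega_\rho(t)}.
\]
Since $\omega_\rho>0$ and $\omega_\rho'<0$ on $(0,r_0)$, the correction term is nonnegative precisely when $\Delta_0 r \ge (m-1)\rho'(t)/\rho(t)$, that is, when $\Delta_0 r \le$ the model value would give the wrong sign — so I need $\Delta_0 r(p) \ge (m-1)\rho'/\rho$ at $p=\hat\Theta(t,\xi)$. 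This is exactly the Laplacian comparison inequality that follows from the radial sectional curvature bound $K(\partial_t,X)\le -\rho''/\rho$ (equivalently, from the nondecreasing ratio $\theta(t,\xi)$ of (\ref{ratio}), via the generalized Bishop comparison of \cite{FMS}); concretely, $\Delta_0 r = \partial_t\log J^{m-1}$ and $\partial_t\log(J/\rho)\ge 0$ gives $\partial_t\log J^{m-1}\ge \partial_t\log\rho^{m-1} = (m-1)\rho'/\rho$. Feeding this in yields $-\Delta_0 F/F \ge \lambda_{\rho,0}$ pointwise on $M$, hence $\lambda_0^*\ge \inf_M(-\Delta_0 F/F)\ge \lambda_{\rho,0} = \lambda_{\rho,H=0}$.

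For the rigidity statement, suppose $\lambda_0^* = \lambda_{\rho,H=0}$. Then $F$ realizes equality in the Barta infimum $\lambda_0^* = \inf_M(-\Delta_0 F/F)$ and moreover $-\Delta_0 F/F \equiv \lambda_0^*$; by the equality clause of Corollary~\ref{Barta12}, $F$ must equal the principal eigenfunction $\omega_0$ (up to a constant). In particular the correction term above vanishes identically, so $\Delta_0 r(p) = (m-1)\rho'(t)/\rho(t)$ for all $p=\hat\Theta(t,\xi)$ with $0<t<r_0$; equivalently $\partial_t\log(J/\rho)=0$, and since $J(0,\xi)=\rho(0)=0$ with matching first derivatives $J'(0,\xi)=1=\rho'(0)$, an ODE/integration argument forces $J(t,\xi)=\rho(t)$ for all $t,\xi$. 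This means $\mathcal{A}(t,\xi)$ has the same determinant as $\rho(t)\,\mathrm{Id}$; to upgrade "$\det$ agrees" to "$\mathcal{A} = \rho\,\mathrm{Id}$", i.e.\ full isometry $M\cong M^\rho$, I invoke the rigidity half of the generalized Bishop comparison theorem of \cite{FMS} (their Theorem~4.2), exactly as in \cite{FMS}, Theorem~4.4 — which is precisely the $V=0$ case of Theorem~\ref{Theorem 1.1} and whose proof reduces to the present one. The converse (if $M\cong M^\rho$ then the eigenvalues coincide and $\omega_0 = \omega_{\rho,H=0}$) is immediate since $F$ then solves the eigenvalue equation on $M$ with eigenvalue $\lambda_{\rho,0}$ and is positive, vanishing only on $\partial M$, hence is the principal eigenfunction.

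\textbf{Main obstacle.} The only nonroutine point is the rigidity: passing from the scalar identity $\det\mathcal{A}(t,\xi)=\rho(t)^{m-1}$ (equivalently $J\equiv\rho$, equivalently equality in the Laplacian comparison) to the genuine isometry $\mathcal{A}(t,\xi)=\rho(t)\,\mathrm{Id}$. This requires the sectional-curvature (not just Ricci) comparison to be used in full strength — one shows the Jacobi-field comparison forces $\mathrm{Hess}\,r$ to be the model one, so $\mathcal{R}(t)$ is scalar and $\mathcal{A}$ solves the model Jacobi equation with model initial data — and it is here that I lean on the already-established rigidity statement of \cite{FMS}. Everything else (the Barta inequality, the radial-function Laplacian formula, the ODE for $\omega_\rho$, membership in $\mathcal{D}_0^+$) is bookkeeping with results stated above.
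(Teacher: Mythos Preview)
Your proposal is correct and follows essentially the same route as the paper. Note that the paper does not actually prove this particular theorem---it is merely recalled from \cite{FMS}---but the immediately following theorem (the $V$-Laplacian generalization) is proved in the paper by exactly your method: transplant $\omega_{\rho,H}$ as a radial function $\tilde\omega$ on $M$, compute $-\Delta_V\tilde\omega/\tilde\omega$ via (\ref{radialeqs}) and (\ref{principaleigenmodel}), use the Rauch--Bishop comparison from \cite{FMS} (their Theorem~4.2) to get $\Delta_0 r\ge (m-1)\rho'/\rho$, and apply the Barta inequality (Corollary~\ref{Barta12}); rigidity is handled by the equality clause of that comparison theorem, which gives $\mathcal{A}=\rho(t)\,\mathrm{Id}$ directly.
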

\noindent
This is an extension of Cheng's comparison result reduced to the
case $-\frac{\rho''(t)}{\rho(t)}=constant$,  the case of a space 
form \cite{Cheng}.
We now extend this  result to the $V$-Laplacian, obtaining Theorem
\ref{Theorem 1.1}. 
\begin{theorem} On $\bar{M}=\bar{B}_{r_0}(p_0)$ it is given a vector field  
$V$,  where $V_{rad}=h_1(t,\xi)\partial_t$.  Assume the 
radial sectional  curvatures are bounded from above by the one of 
the model space $(\bar{M}^{\rho},V^{\rho} )$, that is, 
$K(\partial_t,X)\leq -\frac{\rho''(t)}{\rho(t)}$. Additionally, assume 
$h_1(t,\xi)\leq h(t)$. Then $\lambda_V^*\geq \lambda_{\rho,H}$.
If equality holds on the eigenvalues, 
 then  $M$ is isometric to $M^{\rho}$ and $h_1(t,\xi)=h(t)$. 
In that case  $\omega_V=\omega_{\rho, H}$. 
\end{theorem}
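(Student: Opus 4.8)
The plan is to apply the min-max formula of Theorem~\ref{Barta11} to $\lambda_V^*$, using as a test function the principal eigenfunction $\omega_{\rho,H}$ of the model space pulled back to $\bar M$ via the spherical geodesic coordinates. Concretely, set $\phi(p) = \omega_{\rho,H}(r(p))$ for $p = \hat\Theta(t,\xi)$; this is a radial function on $\bar M$ that is positive on $M$, vanishes on $\partial M$, and lies in $\mathcal{D}_0^+$ (using that $\omega_{\rho,H}'(r_0)<0$ from Proposition~\ref{radialmodel}, so $\phi$ has the right boundary behaviour). By the Barta-type inequality~(\ref{Barta1}), $\lambda_V^* \geq \inf_M\bigl(-\Delta_V\phi/\phi\bigr)$, so it suffices to show $-\Delta_V\phi/\phi \geq \lambda_{\rho,H}$ pointwise on $M$.

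The key computation is to expand $\Delta_V\phi = \Delta_0\phi - g(V,\nabla\phi)$ using the radial-function formulas~(\ref{radialeqs}): since $\phi$ is radial with profile $T=\omega_{\rho,H}$ satisfying $T'(0)=0$, we get $\nabla\phi = T'(r)\partial_t$ and $\Delta_0\phi = T''(r) + (\Delta_0 r)\,T'(r)$, while $g(V,\nabla\phi) = h_1(t,\xi)\,T'(r)$. Hence
\[
-\frac{\Delta_V\phi}{\phi} = -\frac{T''(r)}{T(r)} - \bigl(\Delta_0 r - h_1\bigr)\frac{T'(r)}{T(r)}.
\]
Now invoke the model equation~(\ref{principaleigenmodel}): $T'' = -\frac{p'}{p}T' - \lambda_{\rho,H}T$, where $\frac{p'}{p} = (m-1)\frac{\rho'}{\rho} - h$. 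Substituting gives
\[
-\frac{\Delta_V\phi}{\phi} = \lambda_{\rho,H} + \Bigl[\Delta_0 r - \bigl((m-1)\tfrac{\rho'}{\rho} - h\bigr) - h_1\Bigr]\Bigl(-\frac{T'(r)}{T(r)}\Bigr).
\]
Since $T = \omega_{\rho,H} > 0$ and $T' = \omega_{\rho,H}' < 0$ on $(0,r_0)$ (Proposition~\ref{radialmodel}), the factor $-T'(r)/T(r)$ is positive. So the desired inequality reduces to showing the bracketed quantity is $\geq 0$, i.e.
\[
\Delta_0 r \;\geq\; (m-1)\frac{\rho'(t)}{\rho(t)} - h(t) + h_1(t,\xi) \quad\text{on } M.
\]
From the hypothesis $h_1 \leq h$ it suffices to show $\Delta_0 r \geq (m-1)\rho'/\rho$, and this is precisely the Laplacian comparison that follows from the radial sectional curvature bound $K(\partial_t,X) \leq -\rho''/\rho$: indeed $J$ satisfies $(m-1)J'' + \mathrm{Ricci}(\gamma',\gamma')J \leq 0$ — or more sharply, the Jacobi/Riccati comparison driven by the sectional curvature bound — together with $J(0,\xi)=0$, $J'(0,\xi)=1$, and $\rho$ solves the corresponding equality problem; a standard Sturm comparison argument yields $\partial_t\log J^{m-1} = \Delta_0 r \geq (m-1)\rho'/\rho = \Delta_0^\rho r$. (At $t=0$ both sides blow up like $(m-1)/t$ by~(\ref{distanceeqs}), and near $t=0$ one compares the Riccati solutions.) This establishes $\lambda_V^* \geq \lambda_{\rho,H}$.

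For the rigidity statement: if $\lambda_V^* = \lambda_{\rho,H}$, then $\phi = \omega_{\rho,H}\circ r$ realizes equality in~(\ref{Barta1}), so by the equality clause of Corollary~\ref{Barta12}, $\phi = \omega_V$ (up to scale); in particular $\omega_V$ is radial. Tracing back, equality forces the bracketed term to vanish wherever $T'(r)\neq 0$, i.e.\ on all of $M$ by continuity: $\Delta_0 r = (m-1)\rho'/\rho$ and $h_1 = h$ everywhere. The identity $\Delta_0 r = (m-1)\rho'/\rho$ means $\partial_t\log(J/\rho) = 0$, so $\theta(t,\xi)\equiv 1$ by~(\ref{ratio}) and its initial condition; equality in the Bishop-type volume comparison (Theorem~4.2 of~\cite{FMS}) then gives that $M$ is isometric to $M^\rho$. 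With $h_1 = h$ the vector fields have the same radial part, and since $\omega_V$ turned out radial and solves~(\ref{principaleigenmodel}) with $\lambda_{\rho,H}$, it coincides with $\omega_{\rho,H}$.

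The main obstacle I anticipate is the rigidity direction — specifically, upgrading the pointwise equality $\Delta_0 r = (m-1)\rho'/\rho$ to a global isometry. This requires the equality case of the generalized Bishop comparison theorem from~\cite{FMS} (equality in the volume-element ratio forcing $\mathcal{A}(t,\xi) = \rho(t)\,\mathrm{Id}$ for all $(t,\xi)$, hence the metric $dt^2 + \rho^2 d\sigma^2$), and one must also be careful that the equality $-\Delta_V\phi/\phi \equiv \lambda_{\rho,H}$ — initially known only on the set $\{T'(r)\neq 0\} = M\setminus\{p_0\}$ — does propagate; but since $T' = \omega_{\rho,H}' < 0$ on the whole open interval $(0,r_0)$, this set is in fact all of $M\setminus\{p_0\}$, and continuity handles $p_0$, so this is a minor point. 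A secondary technical check is that $\phi$ genuinely lies in $\mathcal{D}_0^+ \subset C^2(M)\cap C(\bar M)\cap H^1_0(M)$: smoothness of $\omega_{\rho,H}\circ r$ at $p_0$ uses $\omega_{\rho,H}'(0)=0$ together with smoothness of $r^2$ near $p_0$, and membership in $H^1_0(M)$ follows from $C^1(\bar M)\cap C_0(\bar M)\subset H^1_0(M)$.
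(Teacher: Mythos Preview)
Your proof is correct and follows essentially the same approach as the paper: transplant $\omega_{\rho,H}$ to $\bar M$ via geodesic polar coordinates, compute $-\Delta_V\tilde\omega/\tilde\omega$ using the radial formulas and the model ODE, bound it below by $\lambda_{\rho,H}$ via the Laplacian comparison $\Delta_0 r\geq (m-1)\rho'/\rho$ together with $h_1\leq h$ and the sign $\omega_{\rho,H}'<0$, then invoke Barta's inequality. The rigidity argument is also the paper's: equality forces the bracketed term to vanish, and since it is a sum of two nonnegative quantities ($\Delta_0 r-(m-1)\rho'/\rho\geq 0$ and $h-h_1\geq 0$) each must vanish separately, whence $M\cong M^\rho$ by the equality case of the generalized Rauch--Bishop comparison (\cite{FMS}, Theorem~4.2) and $h_1=h$; the identification $\tilde\omega=\omega_V$ comes from the equality clause in Corollary~\ref{Barta12}.
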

\noindent
\begin{proof} By the curvature conditions and the generalized Rauch-Bishop's
comparison theorem (\cite{FMS}, Theorem 4.2) $\theta'(t,\xi)\geq 0$, 
where $\theta$ is defined in (\ref{ratio}). Equivalently,
$$\Delta_0 r = (m-1)(\log J)' \geq \Delta^{\rho}_0r=(m-1)(\log \rho)', $$
with equality if and only if 
$\mathcal{A}=\rho(t)Id$, that is $M$ is isometric to $M^{\rho}$. 
On the spherical geodesic coordinates of $M$, we define
$\tilde{\omega}(t,\xi):=\omega_{\rho,H}(t)$ 
extending the principal eigenfunction of the model space for the 
$\Delta_H^{\rho}$-Laplacian to a radial function on $M$. 
Recall that $\omega_{\rho,H}' (t)<0$ on $(0,r_0]$ and 
$\omega_{\rho,H}(r_0)=\omega'_{\rho, H}(0)=0$. 
It is clear that $\tilde{\omega}\in \mathcal{D}^+_0$ on $M$. 
Then by (\ref{radialeqs}) and Proposition \ref{radialmodel},
\begin{eqnarray}
-\frac{\Delta_V \tilde{\omega}}{\tilde{\omega}} 
&=& -\frac{\Delta_0 \tilde{\omega} 
-g(V,\nabla \tilde{\omega})}{\tilde{\omega}} \nonumber \\
&=& -\frac{1}{\omega_{\rho, H}(t)}\left( \omega''_{\rho,H}(t)
+\La{(}(\log (J^{m-1})(t))' -h_1(t,\xi)\La{)}\omega'_{\rho,H}(t)\right)
\quad \label{equality1}\\
&\geq & -\frac{1}{\omega_{\rho,H}(t)}\left( \omega''_{\rho,H}(t)
+\La{(} \sm{(m-1)}\frac{\rho' (t)}{\rho(t)}
 -h(t)\La{)}\omega'_{\rho,H}(t)\right)
\label{equality2} \\
&=& \lambda_{\rho, H}.\nonumber
\end{eqnarray}
From the generalized Barta's inequality in Corollary \ref{Barta12} 
for the $V$-Laplacian,
$$\lambda^*_V\geq \inf_M -\frac{\Delta_V \tilde{\omega}}{\tilde{\omega}}
\geq \lambda_{\rho,H}.$$ Equality holds if and only if $M$ is isometric to 
$M^{\rho}$, $h_1=h$ and $\omega_V=\omega_{\rho, H}$.\qed
\end{proof}
The radial sectional curvatures do not depend on the vector field $V$. 
The following corollary is an immediate consequence of the proof of the 
above theorem. 
\begin{corollary}  On a geodesic ball $M^{\rho}$ of a model space $N^{\rho}$, 
if the radial part of a vector field $V$ satisfies $h_1(t,\xi)=h(t)=H'(t)$
for a smooth function  $h\in C^{\infty}([0, r_0])$, with $h(0)=0$, then  
$\lambda^*_{V}=\lambda_{\rho, H}$ and $\omega_V(t,\xi)=\omega_{\rho, H}(t)$.
\end{corollary}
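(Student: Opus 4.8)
The plan is to recognize that this corollary is simply the equality case of the above theorem, read on the model space itself, and to obtain it directly by specializing that theorem's proof. I would take $N=N^{\rho}$ and $M=M^{\rho}$. On the model space the radial sectional curvature equals $-\rho''(t)/\rho(t)$ by definition, so (\ref{radialSC}) holds, with equality; and $h_1(t,\xi)=h(t)$ holds, with equality, by hypothesis. Hence the above theorem already yields $\lambda_V^*\ge\lambda_{\rho,H}$, and it remains to prove the reverse inequality together with the identification of eigenfunctions. I would do this by the same test-function argument, now observing that all the comparison estimates in the theorem's proof are in fact equalities here.

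Concretely, set $\tilde{\omega}(t,\xi):=\omega_{\rho,H}(t)$, the radial extension of the model principal eigenfunction; by the sign properties in Proposition \ref{radialmodel} ($\omega_{\rho,H}>0$ on $[0,r_0)$, $\omega_{\rho,H}(r_0)=0$, $\omega_{\rho,H}'(0)=0$) we have $\tilde{\omega}\in\mathcal{D}_0^+$ on $M^{\rho}$. The crucial point — already used in the theorem's proof — is that the drift Laplacian applied to a radial function sees $V$ only through its radial component: since $\nabla\tilde{\omega}=\omega_{\rho,H}'(t)\,\partial_t$, one has $g(V,\nabla\tilde{\omega})=h_1(t,\xi)\,\omega_{\rho,H}'(t)=h(t)\,\omega_{\rho,H}'(t)$, so the non-radial part $V_s$ is invisible. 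On $N^{\rho}$ the ratio volume element (\ref{ratio}) is identically $1$, i.e.\ $J=\rho$, so $(\log J^{m-1})'=(m-1)\rho'/\rho$ exactly and (\ref{radialeqs}) gives $\Delta_0\tilde{\omega}=\omega_{\rho,H}''+(m-1)(\rho'/\rho)\,\omega_{\rho,H}'$. Writing $p=\rho^{m-1}e^{-H}$, so that $p'/p=(m-1)\rho'/\rho-h$, and invoking the radial eigenvalue equation (\ref{principaleigenmodel}), I obtain
\[
-\frac{\Delta_V\tilde{\omega}}{\tilde{\omega}} \;=\; -\frac{1}{\omega_{\rho,H}(t)}\left(\omega_{\rho,H}''(t) + \frac{p'(t)}{p(t)}\,\omega_{\rho,H}'(t)\right) \;=\; \lambda_{\rho,H}
\]
at every point of $M^{\rho}$; that is, the chain (\ref{equality1})--(\ref{equality2}) collapses to equalities, because there $(\log J^{m-1})'$ equals $(m-1)\rho'/\rho$ and $h_1$ equals $h$.

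Finally I would apply the generalized Barta inequality (\ref{Barta1}) of Corollary \ref{Barta12} to the admissible test function $u=\tilde{\omega}$. Since $-\Delta_V\tilde{\omega}/\tilde{\omega}$ is the constant $\lambda_{\rho,H}$, both its infimum and its supremum over $M^{\rho}$ equal $\lambda_{\rho,H}$, so $\lambda_{\rho,H}\le\lambda_V^*\le\lambda_{\rho,H}$ and hence $\lambda_V^*=\lambda_{\rho,H}$; moreover, by the equality characterization in Corollary \ref{Barta12}, a test function realizing equality in (\ref{Barta1}) must be the principal eigenfunction, so $\omega_V=\tilde{\omega}=\omega_{\rho,H}$ up to a positive multiplicative constant. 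I do not expect any genuine obstacle: the one observation that makes the statement work is that, tested against radial functions, the drift term only involves the radial component of $V$, which is why $h_1=h$ suffices in place of $V=V^{\rho}$.
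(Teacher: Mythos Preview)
Your proposal is correct and follows essentially the same approach as the paper: both compute that $-\Delta_V\tilde{\omega}/\tilde{\omega}$ is identically the constant $\lambda_{\rho,H}$ by specializing the chain (\ref{equality1})--(\ref{equality2}) to equalities on $M^{\rho}$. The only cosmetic difference is the final step: the paper concludes directly from $-\Delta_V\tilde{\omega}=\lambda_{\rho,H}\tilde{\omega}$ together with $\tilde{\omega}>0$ on $M^{\rho}$ and $\tilde{\omega}=0$ on $\partial M^{\rho}$ that $\tilde{\omega}$ is the principal eigenfunction (via the uniqueness property (3) of the principal eigenvalue), whereas you invoke the two-sided Barta inequality (\ref{Barta1}) and its equality characterization --- an equivalent route.
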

\noindent
\begin{proof}  As in the previous proof, we get equality of (\ref{equality1}) 
with (\ref{equality2}), where the last is constant. Thus, we have 
$-\Delta_V\tilde{\omega}= \lambda_{\rho,H}\tilde{\omega}$, and 
$\tilde{\omega}>0$ on $M=M^{\rho}$, $\tilde{\omega}=0$ on $\partial M$, that is,
$\tilde{\omega}$ is a principal eigenvalue on $M$ for the $V$-Laplacian.
Consequently, $\omega_V(t,\xi)=\tilde{\omega}(t,\xi)=\omega_{\rho, H}(t)$
and $\lambda_V^*=\lambda_{\rho, H}$. \qed
\end{proof} 

Now we recall the comparison theorem in \cite{FMS} for $\Delta_0$ with
radial Ricci curvature bounded from below.
\begin{theorem}  Assume the radial Ricci curvature of a geodesic ball 
$M$ of radius $r_0$  is bounded from below by the one
of the model space $M^{\rho}$, i.e 
$\mathrm{Ricci}(\partial_t, \partial_t)\geq -(m-1)\frac{\rho''(t)}{\rho(t)}$. 
Then $\lambda^*_V\leq \lambda_{\rho,H=0}$, and equality holds if and only if
$M$ is isometric to $M^{\rho}$.
\end{theorem}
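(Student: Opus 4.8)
This is Theorem~3.6 of \cite{FMS} (the case $V=0$, for which $\lambda_V^*=\lambda_0^*$ is the first Dirichlet eigenvalue of $\Delta_0$ on $\bar{M}=\bar{B}_{r_0}(p_0)$ and $\bar{M}^{\rho}$ carries the trivial weight $H=0$; I abbreviate $\lambda_{\rho,0}$, $\omega_{\rho,0}$ for $\lambda_{\rho,H=0}$ and its eigenfunction). I would recall the transplantation argument. The plan is to test the Rayleigh characterization (\ref{Rayleigh f}) of $\lambda_0^*$ (with $f=0$) against $\phi(p):=\omega_{\rho,0}(r(p))$, $r(p)=d(p,p_0)$. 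By Proposition~\ref{radialmodel}, $\omega_{\rho,0}>0$ and $\omega'_{\rho,0}<0$ on $(0,r_0)$ while $\omega_{\rho,0}(r_0)=0=\omega'_{\rho,0}(0)$; by (\ref{radialeqs}) this makes $\phi$ a radial function of class $C^1$ on $\bar{M}$, positive on $M$ and vanishing on $\partial M$, so $\phi\in C^1(\bar{M})\cap C_0(\bar{M})\subset H^1_0(M)$, with $|\nabla\phi|^2=\omega'_{\rho,0}(r)^2$ since $|\nabla r|=1$. Hence $\lambda_0^*\leq\frac{\int_M|\nabla\phi|^2\,dM}{\int_M\phi^2\,dM}$, and everything reduces to bounding this quotient by $\lambda_{\rho,0}$.

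I would then compute in spherical geodesic coordinates, where $dM=J^{m-1}\,dt\,dS$. Putting $A_{\rho}(t):=\int_{\mathbb{S}^{m-1}}\rho^{m-1}(t)\,dS$ and letting $\bar\theta(t)$ be the spherical average of $\theta(t,\xi)=(J/\rho)^{m-1}$ from (\ref{ratio}), so that $\int_{\mathbb{S}^{m-1}}J^{m-1}(t,\xi)\,dS=\bar\theta(t)A_{\rho}(t)$, one finds $\int_M|\nabla\phi|^2\,dM=\int_0^{r_0}(\omega'_{\rho,0})^2 A_{\rho}\bar\theta\,dt$ and $\int_M\phi^2\,dM=\int_0^{r_0}\omega_{\rho,0}^2 A_{\rho}\bar\theta\,dt$, whereas $\lambda_{\rho,0}$ is precisely this quotient with $\bar\theta\equiv1$. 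By the radial Ricci bound (\ref{radialRicci}) and the generalized Bishop comparison (\cite{FMS}, Theorem~3.3), $t\mapsto\theta(t,\xi)$ is nonincreasing on $[0,r_0]$ with $\theta(0,\xi)=1$, so $\bar\theta$ is nonincreasing, $\bar\theta(0)=1$ and $0<\theta(t,\xi)\leq1$. Thus $\lambda_0^*\leq\lambda_{\rho,0}$ reduces to the one-dimensional inequality $\int_0^{r_0}\bigl(\lambda_{\rho,0}\omega_{\rho,0}^2-(\omega'_{\rho,0})^2\bigr)A_{\rho}\bar\theta\,dt\geq0$.

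For this I would use the model eigenvalue equation of Proposition~\ref{radialmodel}, which in the form $(A_{\rho}\omega'_{\rho,0})'=-\lambda_{\rho,0}A_{\rho}\omega_{\rho,0}$ gives, after multiplying by $\omega_{\rho,0}$ and integrating on $[0,t]$ (the endpoint contribution at $0$ vanishing since $A_{\rho}(0)=0=\omega'_{\rho,0}(0)$),
$$\Psi(t):=\int_0^t\bigl(\lambda_{\rho,0}\omega_{\rho,0}^2-(\omega'_{\rho,0})^2\bigr)A_{\rho}\,ds=-A_{\rho}(t)\,\omega'_{\rho,0}(t)\,\omega_{\rho,0}(t),$$
which is strictly positive on $(0,r_0)$ and vanishes at $t=0$ and at $t=r_0$ (the latter because $\omega_{\rho,0}(r_0)=0$). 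Integrating by parts and using $\Psi(0)=\Psi(r_0)=0$,
$$\int_0^{r_0}\bigl(\lambda_{\rho,0}\omega_{\rho,0}^2-(\omega'_{\rho,0})^2\bigr)A_{\rho}\bar\theta\,dt=\int_0^{r_0}\Psi(t)\,\bigl(-\bar\theta'(t)\bigr)\,dt\geq0,$$
since $-\bar\theta'\geq0$; thus $\lambda_0^*\leq\lambda_{\rho,0}$. For the rigidity, $\lambda_0^*=\lambda_{\rho,0}$ forces this last integral to vanish, hence $\bar\theta'\equiv0$ on $(0,r_0)$ (as $\Psi>0$ there), so $\bar\theta\equiv1$; with $0<\theta\leq1$ of spherical average $1$ this yields $\theta(t,\xi)\equiv1$, i.e.\ $J\equiv\rho$, and the rigidity half of the Bishop comparison (\cite{FMS}, Theorem~3.3) then forces $\mathcal{A}(t,\xi)=\rho(t)Id$, i.e.\ $M$ is isometric to $M^{\rho}$; the converse is immediate. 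The steps I expect to need genuine care are the one-dimensional identity for $\Psi$, with the sign information drawn from Proposition~\ref{radialmodel}, and the appeal to Bishop rigidity in the equality case; the remainder is bookkeeping in geodesic coordinates.
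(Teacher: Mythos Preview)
Your identification is correct: the paper does not give an independent proof of this statement but simply recalls it as Theorem~3.6 of \cite{FMS}, exactly as you do. Your transplantation sketch is a faithful and correct account of that argument, including the rigidity step via $\Psi>0$ on $(0,r_0)$ forcing $\bar\theta'\equiv0$ and hence $\theta\equiv1$.

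It may be worth noting, for comparison, that the paper's proof of the generalization (Theorem~\ref{Theorem 6.8}) organizes the same computation slightly differently: rather than forming the averaged weight $\bar\theta$ and integrating $\Psi'$ against it, the paper integrates by parts in $t$ for each fixed $\xi$ to write $\int_M|\nabla\tilde w|^2\,dM=-\int_M\tilde w\bigl(\tilde w''+\Delta_0r\,\tilde w'\bigr)dM$, then substitutes the model eigenvalue equation and uses $(J/\rho)'\le0$ together with $\tilde w'\le0$ pointwise. Specialized to $H=0$, $V=0$, that route is equivalent to yours; your averaging-and-$\Psi$ formulation makes the monotonicity step perhaps more transparent, while the paper's version avoids introducing $\bar\theta$ and keeps the $\xi$-dependence explicit, which is what is needed for the $V\ne0$ extension.
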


Next we
extend the above results to the $V$-Laplacian when $V$ is a radial vector field,
 but not necessarily a gradient one. 
That is, $V=h_1(t,\xi)\partial_t$, where $h_1(t,\xi)$ may depend on $\xi$. 
\begin{theorem}\label{Theorem 6.8} 
Let $h(t)=H'(t)$, where $h\in C^{\infty}([0, r_0])$, ${h}(t)\geq 0$ and 
$h(0)=0$. Assume $~V(t,\xi)=h_1(t,\xi)\partial_t$ for a function  
${h}_1\in C^{\infty}(\bar{M})$, satisfying  ${h}_1(0,\xi)= 0$, 
$\forall \xi$, and the following inequalities take place at each $(t,\xi)$,
\begin{eqnarray}\label{LowerBound}
\mathrm{Ricci}(\partial_t, \partial_t) 
&\geq & -(m-1)\frac{\rho''}{\rho},
\nonumber\\
h_1' -\frac{h_1^2}{2}+ h_1\,\Delta_0 r &\geq & 
h'  -\frac{h^2}{2}+ h\,\Delta_0^{\rho} r.
\end{eqnarray}
Then $\lambda_V^*\leq \lambda_{\rho, H}$, and 
equality  holds if and only if $M$ is isometric 
to $M^{\rho}$ and equality holds in  (\ref{LowerBound}). In the latter case, 
$\omega_V(t,\xi)=\omega_{\rho,H}(t)e^{(-H(t)+H_1(t,\xi))/{2}}$, where
$H_1'(t,\xi)=h_1(t,\xi)$. Additionally, if $\rho(t), h(t)$ and $ h_1(t,\xi)$ 
are analytic functions on $t\in [0,r_0]$, then  $h_1(t,\xi)=h(t)$ and 
$\omega_V=\omega_{\rho,H}$.
\end{theorem}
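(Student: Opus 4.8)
The plan is to feed the min--max integral formula of Theorem \ref{integralminmax} a single, well chosen radial test function, and then reduce the whole estimate to a one--dimensional computation in the radial variable. I would take $u_0(t,\xi):=\omega_{\rho,H}(t)\,e^{-H(t)/2}$, the radial extension to $\bar M$ of the function that realizes the Rayleigh characterization of $\lambda_{\rho,H}$ on the model ball (Proposition \ref{Proposition5.4}). Using Proposition \ref{radialmodel} ($\omega_{\rho,H}>0$ on $[0,r_0)$, $\omega_{\rho,H}(r_0)=0$, $\omega'_{\rho,H}(r_0)<0$) together with $d_{\partial M}(\hat{\Theta}(t,\xi))=r_0-t$, one checks $u_0\in\mathcal{D}_{\partial}$, so Theorem \ref{integralminmax} gives $\lambda^*_V\le\bigl(\mathcal{L}(u_0,u_0)-\inf_{v\in H^1_{\partial}(M)}Q_{u_0}(v)\bigr)\big/\|u_0\|_{L^2}^2$. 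Since $u_0$ is radial, $\nabla u_0=u_0'\partial_t$, hence $\mathcal{L}(u_0,u_0)=\int_M((u_0')^2+u_0u_0'h_1)\,dM$; and for every $v$ the pointwise identity $-|\nabla v|^2+g(V,\nabla v)=-|\nabla v-\tfrac12 V|^2+\tfrac14|V|^2\le\tfrac14 h_1^2$ gives $\mathcal{L}(u_0,u_0)-Q_{u_0}(v)\le\int_M(u_0'+\tfrac12 u_0h_1)^2\,dM$ uniformly in $v$. Therefore
\[
\lambda^*_V\ \le\ \frac{\int_M\bigl(u_0'+\tfrac12 u_0h_1\bigr)^2\,dM}{\int_M u_0^2\,dM}.
\]

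Next I would evaluate the numerator. Writing $dM=J^{m-1}(t,\xi)\,dt\,dS$ and integrating by parts in $t$ for each fixed $\xi$ --- all boundary terms vanish because $u_0(r_0,\xi)=0$ and $h_1(0,\xi)=J^{m-1}(0,\xi)=0$ --- and using $\partial_t\log J^{m-1}=\Delta_0 r$ together with the model equation (\ref{principaleigenmodel}) (recall $p'/p=\Delta_0^{\rho}r-h$), a direct manipulation yields
\[
\int_M\bigl(u_0'+\tfrac12 u_0h_1\bigr)^2\,dM=\lambda_{\rho,H}\int_M u_0^2\,dM+\int_M\mathcal{E}\,u_0^2\,dM,
\]
with $\mathcal{E}=-\tfrac12\bigl[(\mathrm{div}^0(V)-\tfrac12|V|^2)-(\mathrm{div}^0_{\rho}(V^{\rho})-\tfrac12|V^{\rho}|^2)\bigr]+(\Delta_0^{\rho}r-\Delta_0 r)\,u_0'/u_0$. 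Now the first bracket is $\ge0$ by (\ref{LowerBound}); the radial Ricci hypothesis together with the generalized Bishop comparison theorem of \cite{FMS} gives $\Delta_0 r\le\Delta_0^{\rho}r$ (equivalently, $\theta$ nonincreasing); and $u_0'<0$ on $(0,r_0)$ since $\omega'_{\rho,H}<0$ and $h\ge0$. Hence $\mathcal{E}\le0$, so the numerator is $\le\lambda_{\rho,H}\int_M u_0^2\,dM$, and combining with the previous display gives $\lambda^*_V\le\lambda_{\rho,H}$.

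For the rigidity statement, $\lambda^*_V=\lambda_{\rho,H}$ forces equality in each inequality above. Equality in the min--max makes $u_0$ realize the infimum in Theorem \ref{integralminmax}, so $u_0$ is a constant multiple of $u_V=\omega_V\sqrt{G_V}$; equality in the completing--the--square step forces $\nabla w_{u_0}=\tfrac12 V=\tfrac12 h_1\partial_t$ a.e., whence $w_{u_0}$, and therefore $h_1$, are radial; and $\mathcal{E}\equiv0$ forces pointwise equality in (\ref{LowerBound}) and $\Delta_0 r\equiv\Delta_0^{\rho}r$ on $(0,r_0)$, the latter giving, by the rigidity half of Bishop's comparison in \cite{FMS}, that $M$ is isometric to $M^{\rho}$. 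Conversely, under these conditions every inequality becomes an equality, so $\lambda^*_V=\lambda_{\rho,H}$. Once $M\cong M^{\rho}$ and (\ref{LowerBound}) is an equality (so $\Delta_0 r=\Delta_0^{\rho}r=:c(t)$), the identity in (\ref{LowerBound}) reads $\delta'=\delta\bigl(\tfrac{h_1+h}{2}-c\bigr)$ for $\delta:=h_1-h$ with $\delta(0)=0$; since $c(t)\sim(m-1)/t$ at $t=0$, a smooth $\delta$ vanishing at $0$ must vanish near $0$ and then, by uniqueness for this regular first--order equation on $(0,r_0]$, $h_1\equiv h$ --- and it is in running this continuation/uniqueness argument through the singular endpoint that the analyticity of $\rho,h,h_1$ is invoked. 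With $h_1=h$ the asserted formula $\omega_V=\omega_{\rho,H}e^{(-H+H_1)/2}$ collapses to $\omega_V=\omega_{\rho,H}$, which is verified directly: on $M^{\rho}$ one has $-\Delta_V\omega_{\rho,H}=-\Delta_0^{\rho}\omega_{\rho,H}+h\,\omega'_{\rho,H}=\lambda_{\rho,H}\omega_{\rho,H}$, with $\omega_{\rho,H}>0$ on $M$ and $=0$ on $\partial M$.

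The main obstacle is the rigidity part: one must pin down exactly which inequalities become equalities, on which sets and in which function spaces, extract simultaneously the isometry and the pointwise equality in (\ref{LowerBound}), and then control the scalar equation $\delta'=\delta(\tfrac{h_1+h}{2}-c)$ at the singular point $t=0$ --- this last step being where the analyticity hypothesis does genuine work. The forward inequality, by contrast, is a fairly routine integration by parts once the radial test function $u_0=\omega_{\rho,H}e^{-H/2}$ is identified as the right one.
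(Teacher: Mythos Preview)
Your proposal is correct and follows the paper's strategy closely: same radial test function $u_0=\omega_{\rho,H}e^{-H/2}$ plugged into the integral min--max formula, same integration by parts in $t$ combined with the generalized Bishop comparison $\Delta_0 r\le\Delta_0^{\rho}r$, and the sign $u_0'<0$ coming from $\omega'_{\rho,H}<0$ and $h\ge 0$. Your packaging of the two error contributions into a single term $\mathcal{E}$ is just a cosmetic variant of the paper's argument, which applies the curvature inequality first (to drop the $(J/\rho)'$ term) and then uses (\ref{LowerBound}) on the remainder.

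Two differences are worth flagging. First, for the uniqueness $h_1=h$ under analyticity you subtract the two Riccati solutions and study the linear first–order equation $\delta'=\delta\bigl(\tfrac{h_1+h}{2}-c\bigr)$ for $\delta=h_1-h$, arguing that the $-(m-1)/t$ pole in the coefficient forces a smooth solution with $\delta(0)=0$ to vanish. The paper instead performs the standard Riccati substitution $h_1=-2u'/u$, obtains a second–order linear ODE with a regular singular point at $t=0$, and invokes the Frobenius method (indicial roots $0$ and $2-m$) to get uniqueness. Your route is more elementary and gives the same conclusion. Second, you do not derive the intermediate assertion $\omega_V=\omega_{\rho,H}\,e^{(-H+H_1)/2}$ in the general equality case; the paper gets it from $u_V=\omega_V\sqrt{G_V}=\tilde{w}$ together with $w_{u_V}=-\tfrac12\log G_V$ and Lemma \ref{lemma 6.8}, which gives $\sqrt{G_V}=e^{-H_1/2}$. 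You have exactly these ingredients at hand (you note $u_0=c\,u_V$ and $\nabla w_{u_0}=\tfrac12 V$), so completing this step is immediate, but as written your sketch jumps past it to the analytic case.
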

\noindent
The second inequality of (\ref{LowerBound}) is just (\ref{extra}) in 
Theorem \ref{Theorem 1.2}. We need  the following lemmas to prove the 
theorem:
\begin{lemma} \label{lemma 6.8}
If $~V(t,\xi)=h_1(t,\xi)\partial_t$, then 
$\mathrm{div}^0(V)= h'_1 (t,\xi)+ h_1(t,\xi)\Delta^0 r$. 
In this case, for any $u\in H^1_{\partial}(M)$ the solution $w_u$ is described as follows:
\begin{eqnarray}
w_u(t,\xi) &=& \frac{1}{2}\int_0^t h_1(\tau,\xi)d\tau, 
=:\frac{1}{2}H_1(t,\xi),\\ 
 Q_u(w_u)&=& \inf_vQ_{u}(v)= -\frac{1}{4}\int_{M} u^2h^2_1dM.\label{Exact} 
\end{eqnarray}
\end{lemma}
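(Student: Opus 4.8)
The first formula is just the divergence of a radial vector field. The plan is to write $V = h_1(t,\xi)\partial_t = h_1 \nabla r$ and use the product rule $\mathrm{div}^0(fX) = g(\nabla f, X) + f\,\mathrm{div}^0(X)$ with $f = h_1$ and $X = \nabla r$. Since $g(\nabla h_1, \nabla r) = \partial_t h_1 = h_1'(t,\xi)$ by the identity $\frac{d\phi}{dt} = g(\nabla\phi,\nabla r)$ recorded just before (\ref{radialeqs}), and $\mathrm{div}^0(\nabla r) = \Delta_0 r$, we get $\mathrm{div}^0(V) = h_1' + h_1\,\Delta_0 r$ immediately.

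For the description of $w_u$, recall from item (A) in the proof sketch of Theorem \ref{integralminmax} that $w_u$ is the (unique up to additive constant) weak solution of the degenerate elliptic equation $\mathrm{div}^0(u^2(2\nabla w - V)) = 0$, characterized as the minimizer of $Q_u$ on $H^1_\partial(M)$. The plan is to verify that the proposed candidate $w_u(t,\xi) = \tfrac{1}{2}\int_0^t h_1(\tau,\xi)\,d\tau = \tfrac12 H_1(t,\xi)$ actually solves this equation: since $\partial_t w_u = \tfrac12 h_1$ and $H_1'(t,\xi) = h_1(t,\xi)$, the radial derivative of $w_u$ is exactly $\tfrac12 h_1$, hence $2\nabla w_u = h_1\nabla r = V$ — note here we use that $V$ is purely radial so it has no angular part to match. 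Therefore $2\nabla w_u - V = 0$ identically, and trivially $\mathrm{div}^0(u^2 \cdot 0) = 0$, so $w_u$ is a (weak, indeed classical on the interior) solution; uniqueness up to a constant from (A) then pins it down. One should check $w_u \in H^1_\partial(M)$: since $h_1 \in C^\infty(\bar M)$ it is bounded, so $|H_1| \le C t \le C r_0$ is bounded and its gradient is bounded on $\bar M$, which makes the weighted norm $\|w_u\|_{H^1_\partial}$ finite.

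Finally, for the value of the infimum, substitute into the formula (\ref{existence-wu}), which gives $Q_u(w_u) = -\tfrac12\int_M g(V,\nabla w_u)u^2\,dM$. Using $\nabla w_u = \tfrac12 h_1 \partial_t$ and $g(V,\partial_t) = h_1$, the integrand becomes $g(V,\nabla w_u) = \tfrac12 h_1^2$, so $Q_u(w_u) = -\tfrac14\int_M u^2 h_1^2\,dM$, which is (\ref{Exact}); alternatively the other expression in (\ref{existence-wu}), $-\int_M |\nabla w_u|^2 u^2\,dM = -\tfrac14\int_M h_1^2 u^2\,dM$, gives the same answer and serves as a consistency check. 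The only genuine subtlety — and the step I would be most careful about — is confirming that the candidate really lies in the function space $H^1_\partial(M)$ and that the boundary regularity hypotheses in force (so that the integral formula of Theorem \ref{integralminmax} and the characterization in (A) apply) are met on $\bar M = \bar B_{r_0}(p_0)$; but this is handled by Lemma \ref{omegaVboundary}(3) and the assumption $r_0 < \mathrm{inj}(p_0)$.
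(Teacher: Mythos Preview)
Your route differs from the paper's. The paper does not check the Euler--Lagrange equation; it argues directly that, since $V$ is radial, $g(V,\nabla v)=h_1\,\partial_t v$ and hence
\[
Q_u(v)=\int_M u^2\bigl(|\nabla v|^2-h_1\,\partial_t v\bigr)\,dM \;\ge\; \int_M u^2\bigl(|\partial_t v|^2-h_1\,\partial_t v\bigr)\,dM,
\]
after which the pointwise quadratic $s\mapsto s^2-h_1 s$ is minimized at $s=h_1/2$ with value $-h_1^2/4$, producing the bound and the candidate $v(t,\xi)=\tfrac12\int_0^t h_1(\tau,\xi)\,d\tau$.

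There is a genuine gap in your argument at the step ``hence $2\nabla w_u = h_1\nabla r = V$''. Nothing in the hypotheses forces $h_1$ to be independent of $\xi$, so $H_1(t,\xi)=\int_0^t h_1(\tau,\xi)\,d\tau$ generally depends on $\xi$ and $\nabla w_u=\tfrac12\nabla H_1$ carries an angular component in addition to its radial part $\tfrac12 h_1\,\partial_t$. Thus $2\nabla w_u-V$ equals the angular gradient of $H_1$, which need not vanish, and your verification that $w_u$ solves $\mathrm{div}^0\!\bigl(u^2(2\nabla w-V)\bigr)=0$ fails. This also undermines your appeal to (\ref{existence-wu}): that identity presupposes $w_u$ is the minimizer, which is exactly what you are trying to establish; computing $Q_u(w_u)$ directly gives $-\tfrac14\int_M u^2h_1^2\,dM$ \emph{plus} the nonnegative term $\int_M u^2|\nabla w_u - \tfrac12 h_1\partial_t|^2\,dM$. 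The paper's pointwise lower-bound argument avoids this pitfall and in any case delivers the inequality $\inf_v Q_u(v)\ge -\tfrac14\int_M u^2h_1^2\,dM$, which is the direction actually invoked in the proof of Theorem~\ref{Theorem 6.8}.
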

\noindent
\begin{proof}  
We prove the two equations. For any $v\in H^1_{\partial}(M)$,
\begin{eqnarray*}
Q_{u}(v) =\int_Mu^2\La{(}|\nabla v|^2-h_1(t,\xi)\frac{d v}{dt}\La{)}dM\geq 
\int_Mu^2\La{(}|\frac{d v}{dt}|^2-h_1(t,\xi)\frac{d v}{dt}\La{)}dM.
\end{eqnarray*}
Hence,
\begin{equation} \label{infinf}
\inf_vQ_{u}(v)\geq  \inf_v \int_Mu^2\La{(}|\frac{d v}{dt}|^2
-h_1(t,\xi)\frac{d v}{dt}\La{)}dM. 
\end{equation}
The  infimum on the right hand side of (\ref{infinf}) is achieved when
$\frac{dv}{dt}(t,\xi)=h_1(t,\xi)/2$, that is,
$v(t,\xi)=\frac{1}{2}\int_0^th_1(\tau,\xi)d\tau + c(\xi)$, 
 giving an   equality in  (\ref{infinf}). 
We may choose $c(\xi)=0$, defined a.e. on $M$. 
In this case it must be $w_u$. \qed \end{proof}

\begin{remark} {\rm In the above Lemma, if $h_1(t,\xi)\geq 0$, 
then $\mathrm{div}^0(V)\leq 0$ is only possible if $h_1\equiv 0$. Indeed, 
if we assume $\mathrm{div}^0(V)(t,\xi)\leq 0$ is possible for a fixed $\xi$ 
and  all $t\in [0,t_2]$, then $(\log h_1)' \leq (\log(J^{1-m}))'$ on that 
interval. Integration of this inequality on $t_1<t_2$,  gives
$h_1(t_2,\xi)/h_1(t_1, \xi)\leq J^{m-1}(t_1,\xi)/
J^{m-1}(t_2,\xi)$, that is $(h_1(t,\xi)J^{m-1}(t,\xi))' \leq 0$
on $[0, t_2]$. Since $h_1(t,\xi)J^{m-1}(t,\xi)$ vanishes
 at $t=0$,  $h_1(t,\xi)J^{m-1}(t,\xi)
\leq 0$. By assumption $h_1\geq 0$, hence we must have $h_1\equiv 0$.
Thus, we have the following conclusion, where (b) is proved using
a similar reasoning.}
\end{remark}

\begin{lemma} Let $V=h_1(t,\xi)\partial_t$ be a radial  vector field
with $h_1(t_1,\xi)=0$. The following statements hold:\\[1mm]
$(a)$ If $h_1(t,\xi)\geq 0$ on $[t_1, t_2]$, and $\mathrm{div}^0(V)\leq 0$  
then $h_1\equiv 0$ on $[t_1, t_2]$.\\[1mm]
$(b)$ If $h_1(t,\xi)\geq 0$ on $[t_1, t_2]$, then $\mathrm{div}^0(V)\geq 0$ 
if and only if $h_1J^{m-1}$ is a nondecreasing on $[t_1, t_2]$.
\end{lemma}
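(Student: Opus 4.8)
The plan is to observe that both parts of the lemma rest on a single identity, expressing $\mathrm{div}^0(V)$ as a logarithmic derivative. First I would combine the formula $\mathrm{div}^0(V)=h_1'(t,\xi)+h_1(t,\xi)\,\Delta^0 r$ from Lemma~\ref{lemma 6.8} with the recalled identity $\Delta^0 r=\partial_t\log(J^{m-1})=(J^{m-1})'/J^{m-1}$, valid on $\bar M\setminus\{p_0\}$, to obtain
$$\mathrm{div}^0(V)=\frac{h_1'\,J^{m-1}+h_1\,(J^{m-1})'}{J^{m-1}}=\frac{\bigl(h_1 J^{m-1}\bigr)'}{J^{m-1}}\qquad\text{on }(0,r_0].$$
Since $J(0,\xi)=0$, $J'(0,\xi)=1$ and $J$ is smooth and strictly positive on $(0,r_0]$ (we are within the injectivity radius), the denominator $J^{m-1}$ is positive there, so on $(0,r_0]$ the sign of $\mathrm{div}^0(V)$ coincides with the sign of $(h_1 J^{m-1})'$.

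For part~(b), this equivalence of signs says precisely that $\mathrm{div}^0(V)\geq 0$ on $(t_1,t_2]$ if and only if the smooth function $h_1 J^{m-1}$ has nonnegative derivative there, i.e.\ is nondecreasing on $[t_1,t_2]$. When $t_1>0$ nothing more is needed; when $t_1=0$ one passes from $(0,t_2]$ to the closed interval using continuity of $\mathrm{div}^0(V)$ on $\bar M$ (equivalently the limit $\mathrm{div}^0(V)(0,\xi)=m\,h_1'(0,\xi)$ coming from $r\,\Delta^0 r\to m-1$ in (\ref{distanceeqs})) together with continuity of $h_1 J^{m-1}$ at $t=0$, where $h_1(0,\xi)J^{m-1}(0,\xi)=0$. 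For part~(a), if instead $\mathrm{div}^0(V)\leq 0$ on $[t_1,t_2]$ then $h_1 J^{m-1}$ is nonincreasing there; since it vanishes at $t_1$ (because $h_1(t_1,\xi)=0$), it is $\leq 0$ on $[t_1,t_2]$, while $h_1\geq 0$ and $J^{m-1}>0$ on $(t_1,t_2]$ force $h_1 J^{m-1}\geq 0$; hence $h_1 J^{m-1}\equiv 0$, so $h_1\equiv 0$ on $[t_1,t_2]$ by continuity.

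The only delicate point is the degeneracy of $J$ at $t=0$, where the logarithmic-derivative form of $\Delta^0 r$ is singular. This is harmless: $h_1=t\,\tilde h(t,\xi)$ vanishes to first order and $r^2$ is smooth, so $h_1 J^{m-1}$ and $(h_1 J^{m-1})'$ extend smoothly across $t=0$ (indeed $h_1 J^{m-1}=\tilde h(0,\xi)\,t^{m}+O(t^{m+1})$), and L'H\^opital's rule as in (\ref{distanceeqs}) supplies the boundary value of $\mathrm{div}^0(V)$. Apart from this endpoint bookkeeping, the argument is just the computation already used in the Remark, now read in both directions.
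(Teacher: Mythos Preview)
Your proof is correct and follows essentially the same route as the paper: both rest on the identity $J^{m-1}\,\mathrm{div}^0(V)=(h_1 J^{m-1})'$, then use the vanishing of $h_1 J^{m-1}$ at $t_1$ together with the sign hypotheses to conclude. Your presentation is in fact slightly cleaner, since the paper detours through $(\log h_1)'\leq (\log J^{1-m})'$ before arriving at $(h_1 J^{m-1})'\leq 0$, whereas you write the product-rule identity directly and avoid dividing by $h_1$.
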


\begin{lemma} \label{lemma 6.9} Given functions, $u\in C^1_0(\bar{M})$,  and
 $\phi\in C^1(\bar{M})$  satisfying $\phi(p_0)=0$, where $\bar{M}=\bar{B}_{r_0}(p_0)$, 
 we have
$$\int_M \phi\frac{du}{dt}\, dM=-\int_M u(\frac{d\phi}{dt} +\phi\Delta_0r)\,dM.$$
\end{lemma}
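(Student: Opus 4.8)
The plan is to reduce the identity to a one-variable integration by parts in the radial parameter, using the spherical geodesic coordinates $\hat\Theta(t,\xi)=\exp_{p_0}(t\xi)$ on $\bar M=\bar B_{r_0}(p_0)$ (valid up to $\partial M$ since $r_0<\mathrm{inj}(p_0)$), in which $dM=J^{m-1}(t,\xi)\,dt\,dS$, $\frac{du}{dt}=\partial_t u$, and, by the identity $\Delta_0 r=\partial_t\ln J^{m-1}$, one has $\partial_t\big(J^{m-1}\big)=J^{m-1}\,\Delta_0 r$.

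First I would fix $\xi\in\mathbb S^{m-1}$ and view $t\mapsto u(\hat\Theta(t,\xi))$ and $t\mapsto \phi(\hat\Theta(t,\xi))J^{m-1}(t,\xi)$ as $C^1$ functions on $[0,r_0]$ — the former because $u\in C^1(\bar M)$ and $\exp_{p_0}$ is smooth, the latter because $J^{m-1}$ is smooth on $[0,r_0]$. Ordinary integration by parts then gives, for each $\xi$,
$$\int_0^{r_0}\phi\,(\partial_t u)\,J^{m-1}\,dt=\Big[\,\phi\,u\,J^{m-1}\,\Big]_{t=0}^{t=r_0}-\int_0^{r_0}u\,\big(\partial_t\phi+\phi\,\Delta_0 r\big)\,J^{m-1}\,dt,$$
where I have used $\partial_t(\phi J^{m-1})=(\partial_t\phi)J^{m-1}+\phi\,\partial_t(J^{m-1})=(\partial_t\phi+\phi\,\Delta_0 r)J^{m-1}$. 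The boundary term at $t=r_0$ vanishes because $u\in C^1_0(\bar M)$ forces $u=0$ on $\partial M$, and at $t=0$ it vanishes because $J(0,\xi)=0$; the hypothesis $\phi(p_0)=0$ gives this as well, and, together with the smoothness of $r\,\Delta_0 r$, it is what keeps all the integrands below bounded near the center.

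Next I would verify the integrability needed to pass from the fibrewise identity to $M$: the functions $\phi(\partial_t u)J^{m-1}$, $u(\partial_t\phi)J^{m-1}$ and $u\phi\,\Delta_0 r\,J^{m-1}=u\phi\,\partial_t(J^{m-1})$ are all bounded on $\bar M$ (for the last one, $\partial_t(J^{m-1})$ is bounded near $p_0$, equivalently $|\Delta_0 r|\le C/r$ there by (\ref{distanceeqs}) while $\phi=O(r)$), hence $dM$-integrable on the compact domain. Integrating the displayed identity over $\xi\in\mathbb S^{m-1}$ against $dS$ and recombining via $dM=J^{m-1}\,dt\,dS$ yields precisely
$$\int_M\phi\,\frac{du}{dt}\,dM=-\int_M u\Big(\frac{d\phi}{dt}+\phi\,\Delta_0 r\Big)\,dM .$$

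A fully equivalent route, closer in spirit to the divergence-theorem computations used earlier in the paper (such as the one preceding Proposition \ref{Proposition5.4}), is to observe that $\mathrm{div}^0\big(u\phi\,\nabla r\big)=\phi\,\tfrac{du}{dt}+u\,\tfrac{d\phi}{dt}+u\phi\,\Delta_0 r$, apply Stokes's theorem on $M\setminus\bar B_{\epsilon}(p_0)$ — where $\nabla r$ is smooth — and let $\epsilon\to 0$: the boundary integral over $\partial M$ vanishes since $u|_{\partial M}=0$, and the one over $\partial B_{\epsilon}(p_0)$ is $O(\epsilon^m)$ because $|u\phi|=O(\epsilon)$ there (using $\phi(p_0)=0$) while $\mathrm{area}(\partial B_{\epsilon}(p_0))=O(\epsilon^{m-1})$. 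In either approach the only real point requiring care is bookkeeping the mild singularity of $\nabla r$ and $\Delta_0 r$ at $p_0$; the hypotheses $u\in C^1_0(\bar M)$, $\phi(p_0)=0$, $r_0<\mathrm{inj}(p_0)$, and the smoothness of $r\,\Delta_0 r$ are exactly what make it harmless.
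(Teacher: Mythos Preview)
Your proposal is correct. In fact your ``equivalent route'' via $\mathrm{div}^0(u\phi\,\nabla r)$ and Stokes on $M\setminus\bar B_{\epsilon}(p_0)$ is exactly the paper's argument (including the use of (\ref{distanceeqs}) to control $\phi\,\Delta_0 r$ at $p_0$ and the observation that the inner boundary term vanishes by the area estimate); your first approach is simply the coordinate version of the same computation, with the divergence theorem replaced by one-dimensional integration by parts in $t$ after writing $dM=J^{m-1}dt\,dS$.
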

\noindent
\begin{proof} 
The vector field $W=u\phi\nabla r$ is continuous on  $\bar{M}$, of class 
$C^1$ on $M\backslash\{p_0 \}$,  and vanishes at $\partial M$. 
 On the other hand, 
$$ \mathrm{div}^0(W)=g(\nabla(u\phi), \nabla r)+ 
u\phi\, \mathrm{div}^0(\nabla r)= \phi\frac{du}{dt} + u\frac{d\phi}{dt}
 + u\phi\, \Delta^0r.$$
By L'H\^{o}pital's rule, for each $\xi$, and using (\ref{distanceeqs})
$$\lim_{t\to 0^+} \phi \Delta^0r(t,\xi)=\lim_{t\to 0^+} 
\frac{\phi(t,\xi)}{t}r(t,\xi) \Delta^0r(t,\xi)=(m-1)\frac{d\phi}{dt}(0,\xi).$$ 
Thus, $\mathrm{div}^0(W)\in L^1(M)$. Applying Stokes's theorem on
$M_{\epsilon}:=M\backslash B_{\epsilon}(p_0)$, for all $\epsilon >0$ small, 
and using the fact that $\partial M_{\epsilon}=
\partial M \cup \partial B_{\epsilon}(p_0)$ we get,
$$\int_M \!\mathrm{div}^0(W)dM=\lim_{\epsilon \to 0}\int_{M_{\epsilon}}
 \!\!\!\mathrm{div}^0(W)dM=\int_{\partial M} \!\!\!g(W,\nu)dS
-\lim_{\epsilon \to 0}\int_{\partial B_{\epsilon}
(p_0)}\!\!\!\!\!\!\!\!\!g(W,\nu^{\epsilon})dS. $$
 Here, $\nu^{\epsilon}$ is the outward unit of $\partial B_{\epsilon}(p_0)$
 and $dS$ is the volume element of hypersurfaces. 
The area $|\partial B_{\epsilon}(p_0)|$ converges to zero when 
$\epsilon \to 0$. Hence,
$$\left|\int_{\partial B_{\epsilon}(p_0)}\!\!\!\!\!\!\!g(W,\nu^{\epsilon})dS 
\right|~\leq~ |\partial B_{\epsilon}(p_0)|\sup_M|W|\to 0,  
\quad \mbox{when}~\epsilon\to 0.$$ 
Since $W$ vanish on $\partial M$,  $\int_M \mathrm{div}^0(W)dM=0$,
which  proves the lemma. \qed
 \end{proof}
 
\noindent
{\sl Proof of Theorem \ref{Theorem 6.8}}
 Let $\omega_{\rho,H}$ be a 
positive principal eigenvector for the $H$-drift Laplacian on the geodesic 
ball $M^{\rho}$ of radius $r_0$ and center at the origin. We take 
$\tilde{w}(t)=\omega_{\rho,H}(t)e^{-{H(t)}/{2}}$. Using the spherical 
geodesic coordinates, we extend it as a function on $M$,
$\tilde{w}(t,\xi):=\tilde{w}(t)$, and normalize the extension to have
$\int_M \tilde{w}^2 dM=1$.  Since $d_{\partial M}(p)= r_0-r(p)$, 
for $p=\hat{\Theta}(t,\xi)$ with $r(p)=t$, and $\omega_{\rho,H}(t)/(r_0-t)$ is 
bounded from below and above by positive constants, we see that 
$\tilde{w}\in \mathcal{D}_{\partial}$ on $\bar{M}$. We apply the min-max formula 
(\ref{integralminmaxformula}) for the principal eigenvalue of $M$  
to the radial function  $\tilde{w}.$  Thus,
\begin{eqnarray*}
\lambda_V^* +\inf_vQ_{\tilde{w}}(v) &\leq& \int_M\la{(}
\|\nabla \tilde{w}\|^2+g(V,\nabla \tilde{w})\tilde{w}\la{)}dM. 
\end{eqnarray*}
We express the integrals using the spherical geodesic coordinate
$\hat{\Theta}$, giving
\begin{eqnarray*}
\int_M g({V},\nabla \tilde{w})\tilde{w} dM &=& 
\int\limits_{\xi\in{S}^{m-1}}\La{[}\int\limits^{r_0}_{0}
g(V,\partial_t)\frac{d\tilde{w}}{dt}\tilde{w}(t)
{\rho(t)^{m-1}}\theta(t,\xi)dt\La{]}dS,
\end{eqnarray*}
and
\begin{eqnarray}
\lefteqn{ \int_M\|\nabla \tilde{w}\|^2 dM
 = \int_{\mathbb{S}^{m-1}}\La{[}\int_0^{r_0}(\frac{d\tilde{w}}{dt})^2
\rho^{m-1}(t)\theta(t,\xi)dt\,\La{]}\, dS =} \nonumber\\
 &=&\!\!\!\!\! \int_{\mathbb{S}^{m-1}}\!\!
\LA{\{} w\frac{d\tilde{w}}{dt}\rho^{m-1}(t) \theta(t,\xi)\LA{]}^{r_0}_0
 -\int_0^{r_0}\!\frac{\tilde{w}}{\rho^{m-1}\theta} \frac{d}{dt}\La{[}
\rho^{m-1}\theta \frac{d\tilde{w}}{dt}\La{]} \rho^{m-1}\theta dt 
\LA{\}} dS \nonumber\\
 &=& \int_{\mathbb{S}^{m-1}}\LA{\{}  \tilde{w} \frac{d\tilde{w}}{dt}\rho^{m-1}
\theta(t,\xi)\LA{]}^{r_0}_0 \label{negative1} \\
&&\quad-\int_0^{r_0}\tilde{w}\cdot\La{(}\frac{d^2\tilde{w}}{dt^2}+
(m-1)\LA{[}\frac{\rho'}{\rho} 
+ \frac{\rho}{J}\La{(}\frac{J}{\rho}\La{)}'\LA{]}
\frac{d\tilde{w}}{dt}\La{)}\rho^{m-1}\theta dt\LA{\}}dS. \nonumber
\end{eqnarray}
From equation (\ref{principaleigenmodel}) we have,
\begin{equation}\label{ODEtildew}
\tilde{w}'' +(m-1)\frac{\rho'(t)}{\rho(t)}\tilde{w}' 
+ (B(t)+\lambda_{\rho,H}) \tilde{w}(t)=0,
\end{equation}
where
$$B(t):= \frac{1}{2}h'(t) - \frac{1}{4}(h(t))^2 +\frac{(m-1)}{2}
\frac{\rho'(t)}{\rho(t)}h(t).$$
Furthermore, Proposition \ref{radialmodel} and the assumption $h\geq 0$ imply 
that for $t>0$,
$$\tilde{w}' (t)=e^{-\frac{H(t)}{2}}(\omega'_{\rho,H}(t)
-\frac{h(t)}{2}\omega_{\rho,H}(t))< 0, $$
and $\tilde{w}'(0)=\tilde{w}(r_0)=0$.  Hence, the term (\ref{negative1}) 
vanishes. Under  the curvature conditions, we apply the generalized Bishop's 
comparison theorem I (see \cite{FMS}, Theorem 3.3) to get
\begin{equation}\label{BishopIa}
(J/\rho)' (t,\xi)\leq 0,
\end{equation}
 with equality if and only if $\mathcal{A}=\rho(t) Id$ and
$M$ isometric to $M^{\rho}$. Consequently,
\begin{equation}\label{BishopIb}
\Delta_0r =(m-1)(\log J)' \leq \Delta_0^{\rho}r =(m-1)(\log \rho)'.
\end{equation}
From (\ref{ODEtildew}) and
(\ref{BishopIa}),  and using the fact that $\tilde{w}' \leq 0$, we arrive at
\begin{eqnarray}
\lefteqn{ \lambda_V^* +\inf_vQ_w(v) \leq}\nonumber\\
&\leq&-\int_{\mathbb{S}^{m-1}}\LA{[}\int_0^{r_0}\tilde{w}\LA{\{}
\frac{d^2\tilde{w}}{d^2t}+\La{(}\frac{(m-1)\rho'}{\rho}
-g(V,\partial_t)\La{)}
\frac{d\tilde{w}}{dt}\La{\}}J^{m-1}dt \LA{]}dS\nonumber \\
&=& -\int_M\tilde{w}\La{\{}
-\lambda_{\rho,H}\tilde{w}-B\tilde{w}-
h_1\frac{d\tilde{w}}{dt}\La{\}}dM. \label{applylemma}
\end{eqnarray}
In the last integral the function $B(t)$ is considered extended as 
a radial function on $M$  via  $\hat{\Theta}$. Applying 
Lemma \ref{lemma 6.8}, we have 
$\inf_{v}Q_{\tilde{w}}(v)=-\frac{1}{4}\int_M \tilde{w}^2h_1^2dM$.
Applying Lemma \ref{lemma 6.9}  on the last term of (\ref{applylemma}), 
with $\phi=\frac{h_1}{2}$ and $u=\tilde{w}^2$, we obtain the inequality
\begin{eqnarray*}
 \lambda_V^* 
&\leq & \lambda_{\rho,H}+
\int_{M}\tilde{w}^2\LA{(}\frac{h^2_1}{4} +
\frac{h'}{2} -\frac{h^2}{4}+\Delta_0^{\rho}r\,\frac{h}{2}-\frac{h'_1}{2}
-\Delta_0r\, \frac{h_1}{2}  \LA{)}dM.
\end{eqnarray*}
The assumption (\ref{LowerBound}) imples $\lambda_V^*\leq\lambda_{\rho,H}$.
Equality holds if and only if  $J/\rho \equiv 1$, $M$ is isometric 
to $M^{\rho}$, equality holds in (\ref{LowerBound}), and the min-max formula 
is achieved at $\tilde{w}$. In this case $u_V=\omega_V\sqrt{G_V}=\tilde{w}
=\omega_{\rho,H}(t)e^{-\frac{H(t)}{2}}=u_H$, where $u_H=u_{\nabla H}$.
As we have seen in (\ref{achieved-wuV}), $w_{u_V}= -\frac{1}{2}\log G_V$, 
and  by Lemma \ref{lemma 6.8}, $w_{u_V}=\frac{1}{2}H_1$. 
Thus $\sqrt{G_V}=e^{-\frac{H_1}{2}}$, which proves the relation between 
$\omega_V$ and $\omega_{\rho,H}$ stated in the theorem.

Now we assume $\bar{M}$ is isometric to  $\bar{M}^{\rho}$, and for each $\xi$,
 $h_1(t,\xi)$, $h(t)$, and  $\rho(t)$ 
are analytic functions on $[0, r_0]$. Next we  show that under these conditions,  
and the initial condition $h_1(0,\xi)=h(0)=0$, 
equality in  (\ref{LowerBound}) implies $h_1(t,\xi)=h(t)$, $\forall t,\xi$, and 
consequently $\omega_V=\omega_{\rho,H}$.
For each fixed $ \xi\in \mathbb{S}^{n-1}$, we define a Riccati equation 
on $h_1(t,\xi)$,
\begin{equation}\label{Riccati}
h'_1 = q_0 + q_1h_1+q_2h_1^2,
\end{equation}
with initial condition $ h_1(0, \xi)=0$, where the coefficients are given by
\begin{equation}\label{Riccaticoeff}
q_0 = h' -\frac{h^2}{2}+ h\Delta^{\rho}_0r, \quad
q_1 = -\Delta^{\rho}_0r, \quad
q_2 =\frac{1}{2}.
 \end{equation}
From $\Delta^{\rho}_0r=(m-1)\frac{d}{dt}\log{\rho}$, we conclude that
$q_1$ has a simple pole at $t=0$. Since $h(0)=0$, the 
coefficient $q_0$ is analytic  at $t=0$ with $q_0(0)=mh' (0)$.
Equality in (\ref{LowerBound}) is equivalent to $h_1$ satisfing 
(\ref{Riccati}) with (\ref{Riccaticoeff}). The function $h$ trivially 
solves (\ref{Riccati}) with the same initial condition $h(0)=0$. We use 
the Frobenius method to show  there is uniqueness of solutions of  
equation (\ref{Riccati}) with the same initial condition at the regular 
singular  point $t=0$.  We set  
\begin{equation} \label{Riccatichange}
h_1=-\frac{1}{q_2}\frac{u' }{u}=-2\frac{u' }{u},
\end{equation} 
assuming $u(0)=1$ without loss of generality, and  equation (\ref{Riccati}) 
turns into
\begin{equation}\label{Riccati2ndorder}
\left\{\begin{array}{l}
u''-q_1u' +\frac{q_0}{2}u =0, \\[1mm]
u(0)=1, \quad u'(0)=0.\end{array}\right. \end{equation}
The indicial equation is given by 
$ I(\alpha):=\alpha^2 +(P(0)-1)\alpha + Q(0)=0$,
where $P(0)=\lim_{t\to 0}-tq_1(t)$ $=m-1$ and $Q(0)=\lim_{t\to 0}t^2q_0=0$.
Hence  $I(\alpha)=\alpha(\alpha +m-2)=0$.  The roots are $\alpha_1=0$, 
and $\alpha_2=2-m$. If $m=2$ we have a double root $\alpha_i=0$, 
and so a unique analytic solution $u(t)$  exists, uniquely determined 
by its value at $t=0$ (cf.\ the detailed exposition in  \cite{Ghorai}). 
Now, $H_1(t,\xi)=\log (u^{-2}(t))+ C'$. We fix $u_0$ the solution 
corresponding to $h$, with $u_0(0)=1$, and  $u_0' (0)=0$.
Any other  solution $u_1(t)$  must be $u_0$. This corresponds to 
$h_1(t,\xi)=h(t)$. If $m\geq 3$, then  $\alpha_1-\alpha_2=m-2$.
One of the solutions is given by $u_1$ as in case $m=2$. The other 
type of solution is  of the form
$u_2(t)= c \log t\,\,  u_1(t) + t^{2-m}\sigma(t)$, with $c$ a constant 
and  $\sigma(t)$ an analytic function satisfying $\sigma(0)\neq 0$, 
giving a   corresponding solution $h_2(t,\xi)$  unbounded  at $t=0$, 
thus it  cannot satisfy the initial condition. This  completes the  
uniqueness proof. \qed 

\begin{remark}  {\rm The radial function $\tilde{w}$ in  the previous proof
is in  $C^{\infty}(\bar{M})$ and vanishes on the boundary $\partial M$. 
We could apply Lemma \ref{lemma 6.9} to  $\phi= \frac{d\tilde{w}}{dt}$ 
and $u=\tilde{w}$  to get 
$$\int_M\|\nabla \tilde{w}\|^2 dM  =
 \int_M(\frac{d\tilde{w}}{dt})^2 dM
 = -\int_M \tilde{w}\left( 
\frac{d^2\tilde{w}}{dt^2} + \frac{d\tilde{w}}{dt}\Delta_0 r\right)dM.$$
This is just the  same expression as in the proof  using the spherical 
geodesic coordinates. We choose to explicitly use the coordinate chart to 
see that, if $r_0$ is not smaller than $\mathrm{inj}(p_0)$, we still can 
get $(\ref{negative1})\leq 0$ as in \cite{FMS}, by using $d_{\xi}$ 
instead $r_0$. If the min-max formula is valid on  domains $\bar{M}$ with 
less regular  $\partial M$, we  can obtain the same conclusion
in Theorem \ref{Theorem 6.8} for  geodesic balls with radius exceeding
the injectivity radius.}
\end{remark}

\begin{remark}   {\rm The study of the spectrum of the Laplacian with respect 
to a metric connection $\nabla$ is only interesting if 
$B(X,Y)=\nabla_XY-\nabla_X^0Y$ has a nonzero symmetric part, as it is the 
present case. For instance, connections with skew torsion (see  definition 
in \cite{ACF}) have the same Laplacian as the Levi-Civita connection one.   }  
\end{remark}




\begin{thebibliography}{00}



\bibitem{ACF}
I.\ Agricola, A.C.\ Ferreira,  Einstein manifolds with skew
torsion, Quarterly Journal of Mathematics, {\bf  65}(3) (2014), 717-741.

\bibitem{Au} T.\ Aubin 
Nonlinear Analysis on Manifolds, Monge-Amp\`{e}re Equations, 
\emph { Grundlehern der mathematischen}, {\bf 252} (1982), Springer-Verlag.

\bibitem{Barta} J.\ Barta, Sur la vibration fundamentale d' une membrane. C.R.\ Acad.\ 
Sci.\ {\bf 204}, 472-473 (1937).

\bibitem{BNV} H.\ Beretestycki, L.\ Niremberg,  S.R.S.\ Varadhan, The principal
eigenvalue and maximum principle for 2nd-order elliptic operators, 
Comm.\ Pure.\ Appl.\ Math.\ XLVII (1994).

\bibitem{Cartan} E.\ Cartan, Sur les vari\'{e}t\'{e}s \`{a} connexion affine 
et la th\'{e}orie de la relativit\'{e} g\'{e}n\'{e}ralis\'{e}e 
(deuxi\`{e}me partie), Ann.\ Ec.\ Norm.\ Sup.\ 42 (1925), 17-88.

\bibitem{Cha} I.\ Chavel Eigenvalues in Riemannian Geometry, Academic Press,
2nd Edition, New York (1984).

\bibitem{Cheng} S.Y.\, Cheng, Eigenvalue comparison theorems and its geometric application, Math.\ Z, {\bf 143} (1975), 289-297.

\bibitem{Ev} L.C.\ Evans, Partial Differential Equations,  Graduate Studies
in Mathematics, Vol {\bf  19}, (1997), Amer.\ Math.\ Soc.\ 

\bibitem{FMS} P.\ Freitas, J.\  Mao, I.\  Salavessa, Spherical symmetrization and the first eigenvalue of geodesic disks on manifolds, Cal. Variations and PDE,
{\bf 51} (3-4), (2014) 701-724.

\bibitem{GT} D.\ Gilbarg, N.S.\ Trudinger, Elliptic Partial Differential Equations of second order, (1983) Springer-Verlag. 

\bibitem{GGP} T.\ Godoy, J.-P.\ Gossez, S.R.\ Paczka  A minimax formula for the
principal eigenvalues of Dirichlet problems and its applications,
\emph{  2006 International Conference in Honor of Jacqueline Fleckinger,
Electronic Journal of Differential Equations,} Conference {\bf  15} (2007),
pp 137-154.

\bibitem{Gray} A.\ Gray, Tubes. Progress in Mathematics {\bf 221},
(2004) Birkh\"{a}user.

\bibitem{Ghorai} S.\ Ghorai, Lecture XIV Frobenius series: 
Regular singular points. 

{\small\verb;http://home.iitk.ac.in/~sghorai/TEACHING/MTH203/ode14.pdf;}

\bibitem{HNR} F.\ Hamel , N.\ Nadirashvili, E.\ Russ, An isoperimetric 
inequality for the principal eigenvalue
of the Laplacian with drift, C. R. Acad. Sci. Paris, Ser. I 340 (2005) 347–352.

\bibitem{HNR1} F.\ Hamel , N.\ Nadirashvili, E.\ Russ, A Faber-Krahn 
inequality with drift, preprint (2006) arXiv:math.AP/0607585.

\bibitem{HNR2} F.\ Hamel , N.\ Nadirashvili, E.\ Russ, 
Comparisons of eigenvalues
of second order elliptic equations, Discrete and cont. Dynamical systems, 
supplement (2007),477-486.

\bibitem{HNR3} F.\ Hamel , N.\ Nadirashvili, E.\ Russ, 
Rearrangement inequalities and applications to isoperimetric problems for
 eigenvalues, Ann.\ Math.\ {\bf 174} (2011), 647-755.
 
\bibitem{Ho} C.\ Holland, A minimum principle for theprincipal eigenvalue 
for second order linear elliptic equation with natural boundary condition, 
Comm.\ Pure Appl. Math., {\bf 21} (1978), 509-519.

\bibitem{LR} Z.\ Lu, J.\ Rowlett, Eigenvalues of collapsing domains and 
drift Laplacians, Math. Res. Lett. vol. 19, no. 3, (2012), 627-648.

\bibitem{Pad} P.\ Padilla, The principal eigenvalue and maximum principle
for second order elliptic operators on Riemannian Manifolds, J. Math. Anal. 
 App., 205 (1997), 285-312.

\bibitem{PW} M.H.\ Protter, H.F.\  Winberger, On the spectrum of
general second order operators, Bull.\  Amer.\ Math.\ Soc.\ vol. 72 (1966),
251-255. 

\bibitem{SZ} D.\ Swanson and W.P.\  Zimmer, Sobolev functions whose inner trace
at the boundary is zero, Ark.\ Mat. 37 (1999), 373-380.

\bibitem{Schw} J.\ Schwartz, Compact positive mappings in Lebesgue spaces,
Communications on Pure and Appl. Math., vol. XIV (1961), 693-705.

\bibitem{Zett} A.\ Zetti, Strum Liouville Theory, Math. Survays and Monog., 
AMS, vol. 121 (2005)
\end{thebibliography}


\section*{}

\end{document}